\documentclass[12pt,a4paper]{amsart}
\usepackage[latin1]{inputenc}   
\setlength{\textwidth}{16cm}
\setlength{\textheight}{23cm}
\setlength{\topmargin}{-1cm}
\setlength{\oddsidemargin}{-1mm}
\setlength{\evensidemargin}{-1mm}
\raggedbottom
\usepackage{ dsfont }
\linespread{1.1}

\usepackage{amsmath}
\usepackage{amsfonts}
\usepackage{amssymb}
\usepackage{esint}
\usepackage{graphicx}
\usepackage{amsthm}
\usepackage{enumerate}
\usepackage{verbatim}
\usepackage{hyperref}
\usepackage[english]{babel}
\usepackage{bbm}
\usepackage{mathrsfs}
\usepackage{relsize}
\usepackage{textcomp}
\usepackage[leqno]{amsmath}
\usepackage{color}
\makeatletter
\newcommand{\leqnomode}{\tagsleft@true}
\newcommand{\reqnomode}{\tagsleft@false}
\makeatother

\def \N{\mathbb{N}}
\def \R{\mathbb{R}}

\theoremstyle{plain} 
\newtheorem{thm}{Theorem}[section] 
\newtheorem{cor}[thm]{Corollary} 
\newtheorem{lem}[thm]{Lemma} 
\newtheorem{prop}[thm]{Proposition} 
\newtheorem{hp}[thm]{Hypotheses} 
\newtheorem{defn}[thm]{Definition}
 
\theoremstyle{definition} 
\newtheorem{rem}[thm]{Remark}
\numberwithin{equation}{section}




\newcommand{\miezz}{\frac{1}{2}}

\newcommand{\eps}{\varepsilon}

\newcommand{\into}{\ensuremath{\int_{\Omega}}}

\newcommand{\inti}{\ensuremath{\int_{0}^{t}\int_{\Omega}}}

\newcommand{\intif}{\ensuremath{\int_{0}^{T}\int_{\Omega}}}

\newcommand{\intc}[1]{\ensuremath{\int_{#1}^{T}\int_{\Omega}}}
\newcommand{\intm}[1]{\ensuremath{\int_{0}^{#1}\int_{\Omega}}}
\newcommand{\norm}[1]{\ensuremath{\left\Arrowvert #1 \right\Arrowvert}}
\newcommand{\norminf}[1]{\ensuremath{\left\Arrowvert #1 \right\Arrowvert_\infty}}

\newcommand{\spazio}{\hspace{0.08cm}}

\newcommand{\dm}[1]{\ensuremath{\frac{\delta #1}{\delta m}}}
\newcommand{\dw}{\mathbf{d}_1}

\newcommand{\supo}{\sup\limits_{t\in[0,T]}}
\newcommand{\tr}[1]{\mathrm{tr}(a(x)D^2#1)}
\newcommand{\rt}{\tilde{\rho}}

\newcommand{\bdone}[1]{a(x)D#1\cdot\nu_{|\partial\Omega}=0}
\newcommand{\am}{_{\frac{\alpha}{2},\alpha}}
\newcommand{\amu}{_{\frac{1+\alpha}{2},1+\alpha}}
\newcommand{\amd}{_{1+\frac{\alpha}{2},2+\alpha}}
\newcommand{\amv}{_{1,2+\alpha}}
\newcommand{\daw}[1]{\mathbf{d}_{#1}}

\newcommand{\be}{\begin{equation}}
	\newcommand{\ee}{\end{equation}}
\newcommand{\amf}{_{L^1(W^{-1,\infty})}}
\newcommand{\amc}{_{-(1+\alpha),N}}
\newcommand{\amn}{_{\gamma,-(1+\alpha),N}}
\newcommand{\amb}{_{-(1+\alpha)}}
\newcommand{\amp}{_{L^q(W^{-1,\infty})}}
\newcommand{\amw}{_{W^{-1,\infty}}}
\begin{document}
	
	\title{The Master Equation in a Bounded Domain with Neumann conditions}
	
	\author{Michele Ricciardi}\thanks{Dipartimento di Informatica, Universit\`{a} degli studi di Verona.
		Via S. Francesco, 22, 37129 Verona (VR), Italy.
		\texttt{michele.ricciardi@univr.it}}

	\date{\today}

	\maketitle
	
	\begin{abstract}
		In this article we study the well-posedness of the Master Equation of Mean Field Games in a framework of Neumann boundary condition. The definition of solution is closely related to the classical one of the Mean Field Games system, but the boundary condition here leads to two Neumann conditions in the Master Equation formulation, for both space and measure. The global regularity of the linearized system, which is crucial in order to prove the existence of solutions, is obtained with a deep study of the boundary conditions and the global regularity at the boundary of a suitable class of parabolic equations.
	\end{abstract}

	\section{Introduction}
	
	Mean Field Games theory is devoted to the study of differential games with a large number $N$ of small and indistinguishable agents. The theory was initially introduced by J.-M. Lasry and P.-L. Lions in 2006 (\cite{LL1, LL2, LL3, LL-japan}), using tools from mean-field theories, and in the same years by P. Caines, M. Huang and R. Malham\'{e} \cite{HCM}.
	
	The macroscopic description used in mean field game theory leads to study coupled systems of PDEs, where the Hamilton-Jacobi-Bellman equation satisfied by the single agent's value function $u$ is coupled with the Kolmogorov Fokker-Planck equation satisfied by the distribution law of the population $m$. The simplest form of this system is the following
	\begin{equation}\label{meanfieldgames}
		\begin{cases}
			-\partial_t u - \mathrm{tr}(a(x)D^2u) +H(x,Du)=F(x,m)\,,\\
			\partial_t m - \sum\limits_{i,j} \partial_{ij}^2 (a_{ij}(x)m) -\mathrm{div}(mH_p(x,Du))=0\,,\\
			m(0)=m_0\,, \hspace{2cm} u(T)=G(x,m(T))\,.
		\end{cases}
	\end{equation}
	Here, $H$ is called the \emph{Hamiltonian} of the system, whereas $a$ is a uniformly elliptic matrix, representing the square of the diffusion term in the stochastic dynamic of the generic player, and $F$ and $G$ are the running cost and the final cost related to the generic player.\\
	
	In his lectures at Coll\`{e}ge de France \cite{prontoprontopronto}, P.-L. Lions proved that the solutions $(u,m)$ of \eqref{meanfieldgames} are the trajectories of a new infinite dimensional partial differential equation.
	This $PDE$ is called \textbf{\emph{Master Equation}} and summarizes the informations contained in \eqref{meanfieldgames} in a unique equation.
	
	The definition of the Master Equation is related to its characteristics, which are solution of the MFG system. To be more precise, considering the solution $(u,m)$ of the system \eqref{meanfieldgames} with initial condition $m(t_0)=m_0$, one defines the function
	\begin{equation}\label{defu}
		U:[0,T]\times\Omega\times\mathcal{P}(\Omega)\to\R\,,\qquad U(t_0,x,m_0)=u(t_0,x)\,,
	\end{equation}
	where $\Omega\subseteq\R^d$ and $\mathcal{P}(\Omega)$ is the set of Borel probability measures on $\Omega$.
	
	In order to give sense at this definition, the equation \eqref{meanfieldgames} must have a unique solution defined in $[0,T]\times\Omega$, for all $(t_0,m_0)$. So, we assume that $F$ and $G$ are \emph{monotone} functions with respect to the measure variable, a structure condition which ensures the existence of a unique solution for large time interval. 
	
	If we compute, at least formally, the equation satisfied by $U$, we obtain a Hamilton-Jacobi equation in the space of measures, called Master Equation.\\
	
	The relevance of the Master Equation was recognized in different papers, and most important topics like existence, uniqueness and regularity results have been developed. For example, in \cite{14} and \cite{15} Bensoussan, Frehse and Yam reformulated this equation as a PDE set on an $L^2$ space, and in \cite{24} Carmona and Delarue interpreted it as a decoupling field of forward-backward stochastic differential equations in infinite dimension. A very general result of well-posedness of the Master Equation was given by Cardaliaguet, Delarue, Lasry and Lions in \cite{card}.
	
	So far, most of the literature, especially in the Master Equation's papers, considers the case where the state variable $x$ belongs to the flat torus (i.e. periodic solutions, $\Omega=\mathbb{T}^d$), or, especially in the probabilistic literature, in the whole space $\R^d$.
	
	But in many economic and financial applications it is useful to work with a process that remains in a certain domain of existence; thus, some conditions at the boundary need to be prescribed. See for instance the models analyzed by Achdou, Buera et al. in \cite{gol}.
	
	In this paper we want to analyze this situation, by studying the well-posedness of the Master Equation in a framework of Neumann condition at the boundary.
	
	In this case the MFG system \eqref{meanfieldgames} is constrained with the following boundary conditions: for all $x\in\partial\Omega$
	\begin{equation}\label{fame}
		a(x)D_x u(t,x)\cdot\nu(x)=0\,,\qquad \mathlarger{[}a(x) Dm(t,x)+m(H_p(x,Du)+\tilde{b}(x))\mathlarger{]}\cdot\nu(x)=0\,,
	\end{equation}
	where $\nu(\cdot)$ is the outward normal at $\partial\Omega$ and $\tilde{b}$ is a vector field defined as follows:
	$$
	\tilde{b}_i(x)=\mathlarger{\sum}\limits_{j=1}^d\frac{\partial a_{ji}}{\partial x_j}(x)\spazio,\hspace{2cm}i=1,\dots,d\spazio.
	$$\,.\\
	
	The Master Equation, in this case, takes the following form
	\begin{equation}\begin{split}
			\label{Master}
			\left\{
			\begin{array}{rl}
				&-\,\partial_t U(t,x,m)-\mathrm{tr}\left(a(x)D_x^2 U(t,x,m)\right)+H\left(x,D_x U(t,x,m)\right)\\&-\mathlarger{\into}\mathrm{tr}\left(a(y)D_y D_m U(t,x,m,y)\right)dm(y)\\&+\mathlarger{\into} D_m U(t,x,m,y)\cdot H_p(y,D_x U(t,y,m))dm(y)= F(x,m)\\&\mbox{in }(0,T)\times\Omega\times\mathcal{P}(\Omega)\spazio,\vspace{0.4cm}\\
				&U(T,x,m)=G(x,m)\hspace{1cm}\mbox{in }\Omega\times\mathcal{P}(\Omega)\spazio,\vspace{0.2cm}\\
				&a(x)D_x U(t,x,m)\cdot\nu(x)=0\hspace{1cm}\quad\,\mbox{for }(t,x,m)\in(0,T)\times\partial\Omega\times\mathcal{P}(\Omega)\,,\\
				&a(y)D_m U(t,x,m,y)\cdot\nu(y)=0\hspace{1cm}\mbox{for }(t,x,m,y)\in(0,T)\times\Omega\times\mathcal{P}(\Omega)\times\partial\Omega\,,
			\end{array}
			\right.
	\end{split}\end{equation}
	where $D_mU$ is a derivation of $U$ with respect to the measure, whose precise definition will be given later. This definition, anyway, is strictly related to the one given by Ambrosio, Gigli and Savar\'{e} in \cite{ags} and by Lions in \cite{prontoprontopronto}.\\
	
	We stress the fact that the last boundary condition, i.e.
	$$
	a(y)D_m U(t,x,m,y)\cdot\nu(y)=0\hspace{1cm}\mbox{for }(t,x,m,y)\in(0,T)\times\Omega\times\mathcal{P}(\Omega)\times\partial\Omega\,,
	$$
	
	is completely new in the literature. It relies on the fact that  we have to pay attention to the space where $U$ is defined, i.e. $[0,T]\times\Omega\times\mathcal{P}(\Omega)$. Then, together with final data and Neumann condition with respect to $x$, there is another boundary condition caused by the boundary of $\mathcal{P}(\Omega)$.\\
	
	The importance of this equation arises in the so-called \emph{convergence problem}. The Mean Field Games system approximates the $N$-player differential game, in the sense that the optimal strategies in the Mean Field Games system provide approximated Nash equilibria (called $\eps$-Nash equilibria) in the $N$-player game. See, for instance, \cite{resultuno}, \cite{resultdue}, \cite{resulttre}.
	
	Conversely, the convergence of the Nash Equilibria in the $N$-player game towards an optimal strategy in the Mean Field Games presents many difficulties, due to the lack of compactness properties of the problem. Hence, the Master Equation plays an instrumental role in order to study this problem. A convergence result in a framework of Neumann conditions at the boundary will be given in the forthcoming paper \cite{prossimamente}.\\
	
	There are many papers about the well-posedness of the Master Equation. We point out here that these papers are studied in two different contexts: the first case is the so-called \emph{First order Master Equation}, studied in this article, where the Brownian motions in the dynamic of the agents in the $N$-player differential game are independent each other. The second case is the \emph{Second order Master Equation}, or \emph{Master Equation with common noise}. In this case, the dynamic has also 
	an additional Brownian term $dW_t$, which is common to all players. This leads to a different and more difficult type of Master Equation, with some additional terms depending also on the second derivative $D_{mm}U$. It is relevant to say that Mean Field Games with common noise were already studied by Carmona, Delarue and Lacker in \cite{loacker}.
	
	Some preliminary results about the Master Equation were given by Lions in \cite{prontoprontopronto} and a first exhaustive result of existence and uniqueness of solutions was proved, with a probabilistic approach, by Chassagneux, Crisan and Delarue in \cite{28}, who worked in a framework with diffusion and without common noise. Buckhdan, Li, Peng and Rainer in \cite{bucchin} proved the existence of a classical solution using probabilistic arguments, when there is no coupling and no common noise. Furthermore, Gangbo and Swiech proved a short time existence for the Master Equation with common noise, see \cite{nuova16}.
	
	But the most important result in this framework was achieved by Cardaliaguet, Delarue, Lasry and Lions in \cite{card}, who proved existence and uniqueness of solutions for the Master Equation with and without common noise, including applications to the convergence problem, in a periodic setting ($\Omega=\mathbb{T}^d$). Other recent results about Master Equation and convergence problem can be found in \cite{dybala, nuova1, gomez, nuova14, cicciocaputo, nuova11, ramadan, nuova4, fifa21, tonali, koulibaly}.\\
	
	This article follows the main ideas of \cite{card}, but many issues appear, connected to the Neumann boundary condition, and more effort has to be done in order to gain the same results.
	
	The function $U$ is defined as in \eqref{defu} and some estimates like global bounds and global Lipschitz regularity are proved.
	The main issue in order to obtain that $U$ solves \eqref{Master} is to prove the $\mathcal{C}^1$ character of $U$ with respect to $m$. This step requires a careful analysis of the linearized mean field game system (see \cite{card}) in order to prove strong regularity of U in the space and in the measure variable.
	
	However, these estimates requires strong regularities of $U$, and so of the Mean Field Games system, in the space and in the measure variable.
	
	The space regularity is obtained in \cite{card} by differentiating the equation with respect to $x$. But in the Neumann case, and in general in any case of boundary conditions, these methods obviously cannot be applied so straightly, and these bounds are obtained using different kind of space-time estimates, which must be handled with care.
	
	Indeed, regularity estimates for Neumann parabolic equation require compatibility conditions between initial and boundary data. Unfortunately, these compatibility conditions will be not always guaranteed in this context. This forces us to generalize the estimates obtained in \cite{card}, by a deeper study of the regularity of solutions for the Fokker-Planck equation.\\ 
	
	The article is divided as follows.
	
	In section $2$ we define some useful tools and we state the main assumptions we will need in order to prove the next results.
	
	In the rest of the article (section $3$ to $6$), we analyze the well-posedness of the Master Equation \eqref{Master}.
	
	The idea is quite classical: for each $(t_0,m_0)$ we consider the $MFG$ system \eqref{meanfieldgames} in $[t_0,T]\times\Omega$, with conditions \eqref{fame}, and we define
	\begin{equation}\label{U}
		U(t_0,x,m_0)=u(t_0,x)\,.
	\end{equation}
	Then we prove that $U$ is a solution of the Master Equation.\\
	
	Section $3-5$ are completely devoted to prove technical results to ensure this kind of differentability.
	
	In section $3$ we prove a first estimate of a solution $(u,m)$ of the Mean Field Games system, namely
	$$
	\norm{u}\amd\le C\,,\qquad\dw(m(t),m(s))\le C|t-s|^\miezz\,,
	$$
	where $\dw$ is  a distance between measures called \emph{Wasserstein distance}, whose definition will be given in Section $3.2$. 
	
	In section $4$ we use the definition of $U$ from the Mean Field Games system in order to prove a Lipschitz character of $U$ with respect to $m$:
	$$
	\norm{U(t,\cdot,m_1)-U(t,\cdot,m_2)}_{2+\alpha}\le C\dw(m_1,m_2)\,.
	$$
	
	In section $5$ we prove the $\mathcal{C}^1$ character of $U$ with respect to $m$. This goes through different estimates on linearized MFG systems.
	
	Once proved the $\mathcal{C}^1$ character of $U$, we can prove that $U$ is actually the unique solution of the Master Equation \eqref{Master}. This will be done in Section $6$.\\

	\section{Notation, Assumptions and Main Result}
	
	Throughout this chapter, we fix a time $T>0$. $\Omega\subset\R^d$ will be the closure of an open bounded set, with boundary of class $\mathcal{C}^{2+\alpha}$, and we define $Q_T:=[0,T]\times\Omega$.
	
	For $n\ge0$ and $\alpha\in(0,1)$ we denote with $\mathcal{C}^{n+\alpha}(\Omega)$, or simply $\mathcal{C}^{n+\alpha}$, the space of functions $\phi\in\mathcal{C}^n(\Omega)$ with, for each $\ell\in\N^r$, $1\le r\le n$, the derivative $D^\ell\phi$ is H\"{o}lder continuous with H\"{o}lder constant $\alpha$. The norm is defined in the following way:
	\begin{align*}
		\norm{\phi}_{n+\alpha}:=\sum\limits_{|\ell|\le n}\norminf{D^l\phi}+\sum\limits_{|\ell|= n}\sup\limits_{x\neq y}\frac{|D^\ell\phi(x)-D^\ell\phi(y)|}{|x-y|^\alpha}\,.
	\end{align*}
	
	Sometimes, in order to deal with Neumann boundary conditions, we will need to work with a suitable subspace of $\mathcal{C}^{n+\alpha}(\Omega)$.
	
	So we will call $\mathcal{C}^{n+\alpha,N}(\Omega)$, or simply $\mathcal{C}^{n+\alpha,N}$, the set of functions $\phi\in\mathcal{C}^{n+\alpha}$ such that $aD\phi\cdot\nu_{|\partial\Omega}=0$, endowed with the same norm $\norm{\phi}_{n+\alpha}$.\\
	
	Then, we define several parabolic spaces we will need to work with during the chapter, starting from $\mathcal{C}^{\frac{n+\alpha}{2},n+\alpha}([0,T]\times\Omega)$.
	
	We say that $\phi:[0,T]\times\Omega\to\R$ is in $\mathcal{C}^{\frac{n+\alpha}{2},n+\alpha}([0,T]\times\Omega)$ if $\phi$ is continuous in both variables, together with all derivatives $D_t^rD_x^s\phi$, with $2r+s\le n$. Moreover, $\norm{\phi}_{\frac{n+\alpha}{2},n+\alpha}$ is bounded, where
	\begin{align*}
		\norm{\phi}_{\frac{n+\alpha}{2},n+\alpha}:=\sum\limits_{2r+s\le n}\norminf{D_t^rD^s_x\phi}&+\sum\limits_{2r+s=n}\sup\limits_t\norm{D_t^rD_x^s\phi(t,\cdot)}_{\alpha}\\&+\sum\limits_{0<n+\alpha-2r-s<2}\sup\limits_x\norm{D_t^rD_x^s\phi(\cdot,x)}_{\frac{n+\alpha-2r-s}{2}}\,.
	\end{align*}
	
	The space of continuous space-time functions which satisfy a H\"{o}lder condition in $x$ will be denoted by $\mathcal{C}^{0,\alpha}([0,T]\times\Omega)$. It is endowed with the norm
	$$
	\norm{\phi}_{0,\alpha}=\sup\limits_{t\in[0,T]}\norm{\phi(t,\cdot)}_\alpha\,.
	$$
	The same definition can be given for the space $\mathcal{C}^{\alpha,0}$. Finally, we define the space $\mathcal{C}^{1,2+\alpha}$ of functions differentiable in time and twice differentiable in space, with all derivatives in $\mathcal{C}^{0,\alpha}(\overline{Q_T})$. The natural norm for this space is
	$$
	\norm{\phi}_{1,2+\alpha}:=\norminf{\phi}+\norm{\phi_t}_{0,\alpha}+\norminf{D_x\phi}+\norm{D^2_x\phi}_{0,\alpha}\,.
	$$
	
	We note that, thanks to \emph{Lemma 5.1.1} of \cite{lunardi}, the first order derivatives of $\phi\in\mathcal{C}^{1,2}$ satisfy also a H\"{o}lder condition in time. Namely
	\begin{equation}\label{precisissimongulaeva}
		\norm{D_x\phi}_{\miezz,\alpha}\le C\norm{f}_{1,2+\alpha}\,.
	\end{equation}
	
	In order to study distributional solutions for the Fokker-Planck equation, we also need to define a structure for the dual spaces of regular functions.
	
	We define, for $n\ge0$ and $\alpha\in(0,1)$, the space $\mathcal{C}^{-(n+\alpha)}(\Omega)$, called for simplicity $\mathcal{C}^{-(n+\alpha)}$ in this article, as the dual space of $\mathcal{C}^{n+\alpha}$, endowed with the norm
	$$
	\norm{\rho}_{-(n+\alpha)}=\sup\limits_{\norm{\phi}_{n+\alpha}\le 1}\langle\rho,\phi\rangle\,.
	$$
	
	With the same notations we define the space $\mathcal{C}^{-(n+\alpha),N}$ as the dual space of $\mathcal{C}^{n+\alpha,N}$ endowed with the same norm:
	$$
	\norm{\rho}_{-(n+\alpha),N}=\sup\limits_{\substack{\norm{\phi}_{n+\alpha}\le 1\\aD\phi\cdot\nu_{|\partial\Omega}=0}}\langle \rho,\phi \rangle\,.
	$$
	
	Finally, for $k\ge1$ and $1\le p\le+\infty$, we can also define the space $W^{-k,p}(\Omega)$, called for simplicity $W^{-k,p}$, as the dual space of $W^{k,p}(\Omega)$, endowed with the norm
	$$
	\norm{\rho}_{W^{-k,p}}=\sup\limits_{\norm{\phi}_{W^{k,p}}\le 1}\langle\rho,\phi\rangle\,.
	$$
	
	\begin{defn}
		Let $m_1,m_2\in\mathcal{P}(\Omega)$ two Borel probability measures on $\Omega$.\\
		We call the \emph{Wasserstein distance} between $m_1$ and $m_2$, and we write $\dw(m_1,m_2)$ the quantity
		\begin{equation}\label{wass1}
			\dw(m_1,m_2):=\sup\limits_{Lip(\phi)\le 1}\into \phi(x)d(m_1-m_2)(x)\,.
		\end{equation}
	\end{defn}
	We note that we can also write \eqref{wass1} as
	\begin{align}\label{wass}
		\dw(m_1,m_2):=\sup\limits_{\substack{\norm{\phi}_{W^{1,\infty}}\le C\\Lip(\phi)\le 1}}\into \phi(x)d(m_1-m_2)(x)\,,
	\end{align}
	for a certain $C>0$. Actually, for a fixed $x_0\in\Omega$, we can restrict ourselves to the functions $\phi$ such that $\phi(x_0)=0$, since
	$$
	\into \phi(x)d(m_1-m_2)(x)=\into (\phi(x)-\phi(x_0))d(m_1-m_2)(x)\,,
	$$
	and these functions obviously satisfies $\norm{\phi}_{W^{1,\infty}}\le C$ for a certain $C>0$.
	
	We will always work with \eqref{wass}, where the restriction in $W^{1,\infty}$ allows us to obtain some desired estimates with respect to $\dw$.

	In order to give a sense to equation \eqref{Master}, we need to define a suitable derivation of $U$ with respect to the measure $m$.
	
	\begin{defn}\label{dmu}
		Let $U:\mathcal{P}(\Omega)\to\R$. We say that $U$ is of class $\mathcal{C}^1$ if there exists a continuous map $K:\mathcal{P}(\Omega)\times\Omega\to\R$ such that, for all $m_1$, $m_2\in\mathcal{P}(\Omega)$ we have
		\begin{equation}\label{deu}
			\lim\limits_{t\to0}\frac{U(m_1+s(m_2-m_1))-U(m_1)}{s}=\into K(m_1,x)(m_2(dx)-m_1(dx))\,.
		\end{equation}
	\end{defn}
	
	Note that the definition of $K$ is up to additive constants. Then, we define the derivative $\dm{U}$ as the unique map $K$ satisfying \eqref{deu} and the normalization convention
	$$
	\into K(m,x)dm(x)=0\,.
	$$
	
	As an immediate consequence, we obtain the following equality, that we will use very often in the rest of the chapter: for each $m_1$, $m_2\in\mathcal{P}(\Omega)$ we have
	$$
	U(m_2)-U(m_1)=\int_0^1\into\dm{U}((m_1)+s(m_2-m_1),x)(m_2(dx)-m_1(dx))\,.
	$$
	
	Finally, we can define the \emph{intrinsic derivative} of $U$ with respect to $m$.
	\begin{defn}\label{Dmu}
		Let $U:\mathcal{P}(\Omega)\to\R$. If $U$ is of class $\mathcal{C}^1$ and $\dm{U}$ is of class $\mathcal{C}^1$ with respect to the last variable, we define the intrinsic derivative $D_mU:\mathcal{P}(\Omega)\times\Omega\to\R^d$ as
		$$
		D_mU(m,x):=D_x\dm{U}(m,x)\,.
		$$
	\end{defn}

	We need the following assumptions:
	\begin{hp}\label{ipotesi}
		Assume that
		\begin{itemize}
			\item [(i)] (Uniform ellipticity) $\norm{a(\cdot)}_{1+\alpha}<\infty$ and $\exists\,\mu>\lambda>0$ s.t. $\forall\xi\in\mathbb{R}^d$ $$ \mu|\xi|^2\ge\langle a(x)\xi,\xi\rangle\ge\lambda|\xi|^2\,;$$
			\item [(ii)]$H:\Omega\times\R^d\to\R$, $G:\Omega\times\mathcal{P}(\Omega)\to\R$ and $F:\Omega\times\mathcal{P}(\Omega)\to\R$ are smooth functions with $H$ Lipschitz with respect to the last variable;
			\item [(iii)]$\exists C>0$ s.t.
			$$
			0< H_{pp}(x,p)\le C I_{d\times d}\spazio;
			$$
			\item [(iv)]$F$ satisfies, for some $0<\alpha<1$ and $C_F>0$,
			$$
			\into \left(F(x,m)-F(x,m')\right) d(m-m')(x)\ge0
			$$
			and
			$$
			\sup\limits_{m\in\mathcal{P}(\Omega)}\left(\norm{F(\cdot,m)}_{\alpha}+\norm{\frac{\delta F}{\delta m}(\cdot,m,\cdot)}_{\alpha,2+\alpha}\right)+\mathrm{Lip}\left(\dm{F}\right)\le C_F\spazio,
			$$
			with
			$$
			\mathrm{Lip}\left(\dm{F}\right):=\sup\limits_{m_1\neq m_2}\left(\dw(m_1,m_2)^{-1}\norm{\dm{F}(\cdot,m_1,\cdot)-\dm{F}(\cdot,m_2,\cdot)}_{\alpha,1+\alpha}\right)\spazio;
			$$
			\item [(v)]$G$ satisfies the same estimates as $F$ with $\alpha$ and $1+\alpha$ replaced by $2+\alpha$, i.e.
			$$
			\sup\limits_{m\in\mathcal{P}(\Omega)}\left(\norm{G(\cdot,m)}_{2+\alpha}+\norm{\frac{\delta G}{\delta m}(\cdot,m,\cdot)}_{2+\alpha,2+\alpha}\right)+\mathrm{Lip}\left(\dm{G}\right)\le C_G\spazio,
			$$
			with
			$$
			\mathrm{Lip}\left(\dm{G}\right):=\sup\limits_{m_1\neq m_2}\left(\dw(m_1,m_2)^{-1}\norm{\dm{G}(\cdot,m_1,\cdot)-\dm{G}(\cdot,m_2,\cdot)}_{2+\alpha,2+\alpha}\right)\spazio;
			$$
			\item [(vi)] The following Neumann boundary conditions are satisfied:
			\begin{align*}
				&\left\langle a(y)D_y\dm{F}(x,m,y), \nu(y)\right\rangle_{|\partial\Omega}=0\,,\qquad \left\langle a(y)D_y\dm{G}(x,m,y),\nu(y)\right\rangle_{|\partial\Omega}=0\,,\\
				&\langle a(x)D_xG(x,m), \nu(x)\rangle_{|\partial\Omega}=0\,,
			\end{align*}
			for all $m\in\mathcal{P}(\Omega)$.
		\end{itemize}
	\end{hp}
	
	Some comments about the previous hypotheses: the first five are standard hypotheses in order to obtain existence and uniqueness of solutions for the Mean Field Games system. The hypotheses about the derivative of $F$ and $G$ with respect to the measure will be essential in order to obtain some estimates on a linearized MFG system.
	
	As regards hypotheses $(vi)$, the second and the third boundary conditions are natural compatibility conditions, essential to obtain a classical solution for the $MFG$ and the linearized $MFG$ system. The first boundary condition will be essential in order to prove the Neumann boundary condition of $D_mU$, see Corollary \ref{delarue}.
	
	With these hypotheses we are able to prove existence and uniqueness of a classical solution for the Master Equation \eqref{Master}. The main result of this paper is the following.
	
	\begin{thm}\label{settepuntouno}
		Suppose hypotheses \ref{ipotesi} are satisfied. Then there exists a unique classical solution $U$ of the Master Equation \eqref{Master}.
	\end{thm}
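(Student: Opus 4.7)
The plan is to follow the scheme of Cardaliaguet-Delarue-Lasry-Lions \cite{card}, with the substantial modifications that the Neumann boundary conditions and hypothesis \ref{ipotesi}(vi) make necessary. The starting point is that for each $(t_0,m_0)\in[0,T]\times\mathcal{P}(\Omega)$ the monotonicity of $F$ and $G$, combined with a fixed-point argument built on Schauder estimates for the Neumann HJB equation and a priori bounds for the Fokker-Planck equation with boundary flux as in \eqref{fame}, produces a unique classical solution $(u,m)$ of \eqref{meanfieldgames}-\eqref{fame} on $[t_0,T]\times\Omega$. One then defines $U(t_0,x,m_0):=u(t_0,x)$ as in \eqref{U}, and the problem reduces to verifying that $U$ is $\mathcal{C}^1$ in the appropriate sense and that it is the unique classical solution of \eqref{Master}.

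First, I would carry out the a priori estimates announced for Section~$3$: a Schauder bound $\|u\|\amd\le C$ from the HJB equation with compatibility condition $aDG(\cdot,m)\cdot\nu=0$ from \ref{ipotesi}(vi), and a time-H\"older bound $\dw(m(t),m(s))\le C|t-s|^{1/2}$ obtained by duality, testing the Fokker-Planck equation against $W^{1,\infty}$ functions and using \eqref{wass}. In Section~$4$ I would then prove the Lipschitz estimate $\|U(t,\cdot,m_1)-U(t,\cdot,m_2)\|_{2+\alpha}\le C\dw(m_1,m_2)$ by writing the coupled system for the differences $u_1-u_2$ and $m_1-m_2$, applying the standard duality identity generated by the monotonicity of $F$ and $G$ to obtain an $L^2$-control, and bootstrapping via Schauder estimates for the Neumann HJB equation.

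Second, Section~$5$ must establish the $\mathcal{C}^1$ character of $U$ with respect to $m$. Formally differentiating the MFG system in the initial measure produces a linearized system for $(v,\rho)=(\delta u/\delta m,\delta m/\delta m)$, where $v$ solves a linear HJB with Neumann data and a source depending on $\rho$, while $\rho$ solves a linear Fokker-Planck with initial datum $\mu$ and a Neumann flux condition. The key step is to obtain uniform control of $v$ in $\mathcal{C}^{1,2+\alpha}$ and of $\rho$ in dual spaces, and then pass to the limit in the difference quotients. With $U$ known to be $\mathcal{C}^1$, Definition \ref{Dmu} gives a well-defined intrinsic derivative $D_mU$, and Corollary \ref{delarue} (consequence of the first identity in \ref{ipotesi}(vi)) ensures the Neumann condition $a(y)D_mU(t,x,m,y)\cdot\nu(y)=0$.

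Third, Section~$6$ derives the Master Equation from the identity $U(t,x,m(t))=u(t,x)$: differentiating in $t$ along the Fokker-Planck flow and substituting the HJB equation for $u$ produces exactly the first line of \eqref{Master}; the terminal condition is immediate from $u(T,\cdot)=G(\cdot,m(T))$; the space Neumann condition is inherited from $u$; and the measure Neumann condition has just been discussed. Uniqueness then follows by the method of characteristics: any classical solution $\tilde U$ of \eqref{Master}, evaluated along a trajectory $t\mapsto m(t)$ issued from the MFG flow, produces $\tilde u(t,x):=\tilde U(t,x,m(t))$ which solves the same HJB equation as $u$ with the same terminal datum and the same Neumann condition, hence $\tilde u=u$ and $\tilde U=U$.

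The principal obstacle is the $\mathcal{C}^1$ step: Schauder regularity for the linearized Fokker-Planck equation in $\mathcal{C}^{\alpha/2,\alpha}$ requires a compatibility condition at $t=t_0$ between the initial datum $\mu=m_1-m_2$ and the Neumann boundary flux, which in general \emph{fails} since arbitrary signed measures do not respect the boundary. This forces us to work with the weaker scales $\mathcal{C}^{-(1+\alpha),N}$ and $W^{-1,\infty}$ introduced in Section~$2$, estimating $\rho$ in duality and only then recovering the Schauder regularity of $v$ from the HJB side by a careful bootstrap. This is the technical core of the paper and the reason hypothesis \ref{ipotesi}(vi) must be imposed in the precise form stated.
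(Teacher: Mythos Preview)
Your plan matches the paper's approach closely: define $U$ via the MFG system, establish Lipschitz and then $\mathcal{C}^1$ regularity in $m$ through the linearized system (working in the dual scales $\mathcal{C}^{-(1+\alpha),N}$ precisely because compatibility conditions fail), invoke Corollary~\ref{delarue} for the boundary condition on $D_mU$, and derive the equation by differentiating along the Fokker--Planck flow. You have correctly identified the technical core and the role of hypothesis~\ref{ipotesi}(vi).

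One point in the uniqueness sketch needs sharpening. You write that a second solution $\tilde U$ should be evaluated ``along a trajectory $t\mapsto m(t)$ issued from the MFG flow''. If by this you mean the flow $m$ already constructed from $U$, the argument does not close: computing $\partial_t\tilde U(t,x,m(t))$ produces integrals of $D_m\tilde U$ against the drift $H_p(y,D_xU(t,y,m))$, whereas the Master Equation for $\tilde U$ involves $H_p(y,D_x\tilde U(t,y,m))$, and these do not cancel unless $D_xU=D_x\tilde U$, which is what you are trying to prove. The paper instead constructs a \emph{new} Fokker--Planck flow $\tilde m$ driven by $H_p(x,D_x\tilde U(t,x,\tilde m))$ (this requires only the Lipschitz regularity of $D_x\tilde U$ in the measure), sets $\tilde u(t,x):=\tilde U(t,x,\tilde m(t))$, and checks---using the Master Equation for $\tilde U$ and integration by parts with both Neumann conditions---that $(\tilde u,\tilde m)$ solves \eqref{meanfieldgames}--\eqref{fame}. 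Uniqueness of the MFG system then gives $(\tilde u,\tilde m)=(u,m)$, hence $\tilde U(t_0,\cdot,m_0)=U(t_0,\cdot,m_0)$ for smooth $m_0$, and a density argument finishes.
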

	
	But first, we have to prove some preliminary estimates about the Mean Field Games system and some other estimates on a linearyzed Mean Field Games system, which will be essential in order to ensure the $\mathcal{C}^1$ character of $U$ with respect to $m$.
	
	\section{Preliminary estimates and Mean Field Games system}

	In this section we start giving some technical results for linear parabolic equations, which will be useful in the rest of the Chapter.
	
	Then we will obtain some preliminary estimates for the Master Equation, obtained by a deep analysis of the Mean Field Games related system.
	
	We start with this technical Lemma.
	
	\begin{lem}\label{sonobravo}
		Suppose $a$ satisfies $(i)$ of Hypotheses \ref{ipotesi}, $b,f\in L^\infty(Q_T)$. Furthermore, let $\psi\in\mathcal{C}^{1+\alpha,N}(\Omega)$, with $0\le\alpha<1$. Then the unique solution $z$ of the problem 
		\begin{equation*}
			\begin{cases}
				-z_t-\mathrm{tr}(a(x)D^2z)+b(t,x)\cdot Dz=f(t,x)\,,\\
				z(T)=\psi\,,\\
				aDz\cdot\nu_{|\partial\Omega}=0
			\end{cases}
		\end{equation*}
		satisfies
		\begin{equation}\label{estensione}
			\norm{z}\amu\le C\left(\norminf{f}+\norm{\psi}_{1+\alpha}\right)\,.
		\end{equation}
		\begin{proof}
			Note that, if $f$ and $b$ are continuous bounded functions, with $b$ depending only on $x$, this result is simply \emph{Theorem 5.1.18} of \cite{lunardi}. In the general case, we argue as follows.	
			
			We can write $z=z_1+z_2$, where $z_1$ satisfies
			\begin{equation}\label{problemadue}
				\begin{cases}
					-{(z_1)}_t-\mathrm{tr}(a(x)D^2z_1)=0\,,\\
					z_1(T)=\psi\,,\\
					aDz_1\cdot\nu_{|\partial\Omega}=0\,.
				\end{cases}
			\end{equation}
			and $z_2$ satisfies
			\begin{equation}\label{problemauno}
				\begin{cases}
					-{(z_2)}_t-\mathrm{tr}(a(x)D^2z_2)+b(t,x)\cdot Dz_2=f(t,x)-b(t,x)\cdot Dz_1\,,\\
					z_2(T)=0\,,\\
					aDz_2\cdot\nu_{|\partial\Omega}=0\,,
				\end{cases}
			\end{equation}
			
			Since in the equation \eqref{problemadue} of $z_1$ we do not have a drift term depending on time, we can apply \emph{Theorem 5.1.18} of \cite{lunardi} and obtain
			$$
			\norm{z_1}\amu\le C\norm{\psi}_{1+\alpha}\,.
			$$
			
			As regards \eqref{problemauno}, obviously $z_2(T)\in W^{2,p}(\Omega)$ $\forall p$, and from the estimate of $z_1$ we know that $f-bDz_1\in L^\infty$. So we can apply the Corollary of \emph{Theorem IV.9.1} of \cite{lsu} to obtain that, $\forall r\ge\frac{d+2}{2}\,,$
			\begin{equation*}
				\norm{z_2}_{1-\frac{d+2}{2r},2-\frac{d+2}{r}}\le C\norminf{f-bDz_1}\le C\left(\norminf{f}+\norm{\psi}_{1+\alpha}\right)\,.
			\end{equation*}
			Choosing $r=\frac{d+2}{1-\alpha}$, one has $2-\frac{d+2}{r}=1+\alpha$, and so \eqref{estensione} is satisfied for $z_2$.
			
			Since $z=z_1+z_2$, estimate \eqref{estensione} holds also for $z$. This concludes the proof.
		\end{proof}
	\end{lem}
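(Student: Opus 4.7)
The plan is to decompose the solution as $z=z_1+z_2$, isolating the terminal data in a driftless auxiliary problem so that the merely $L^\infty$ regularity of $b$ does not interfere with the Schauder-type estimate on $\psi$. The underlying reason for the splitting is that standard Schauder results for Neumann parabolic equations require the drift coefficient to be H\"older continuous, which is not available here; however, the $L^p$ theory of LSU only requires $b\in L^\infty$ provided the terminal/boundary data is zero (so that compatibility is trivially met) and the right-hand side is bounded.

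First I would let $z_1$ solve the homogeneous Neumann problem $-(z_1)_t-\mathrm{tr}(a(x)D^2 z_1)=0$ with $z_1(T)=\psi$ and $aDz_1\cdot\nu_{|\partial\Omega}=0$. Since $a$ depends only on $x$ and $\psi\in\mathcal{C}^{1+\alpha,N}$ already satisfies the Neumann compatibility condition built into the space, a direct application of Theorem 5.1.18 of Lunardi furnishes $\norm{z_1}\amu\le C\norm{\psi}_{1+\alpha}$. In particular $Dz_1\in L^\infty(Q_T)$ with a bound controlled by $\norm{\psi}_{1+\alpha}$.

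Next, setting $z_2:=z-z_1$, it satisfies the same drift equation as $z$ but with zero terminal data and right-hand side $g:=f-b(t,x)\cdot Dz_1\in L^\infty(Q_T)$, controlled by $\norminf{f}+\norm{\psi}_{1+\alpha}$. Because the terminal trace is zero, the Neumann compatibility condition is automatic, so I can invoke the corollary of Theorem IV.9.1 in LSU for Neumann parabolic equations with bounded coefficients and bounded right-hand side. This yields, for every $r\ge(d+2)/2$, an estimate of the form $\norm{z_2}_{1-(d+2)/(2r),\,2-(d+2)/r}\le C\norminf{g}$. Choosing $r=(d+2)/(1-\alpha)$ makes the spatial H\"older exponent exactly $1+\alpha$ and the temporal exponent exactly $(1+\alpha)/2$, producing the desired bound on $z_2$.

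Adding the two estimates gives \eqref{estensione}. The main obstacle I anticipate is precisely the low regularity of $b$: if one tried to treat the full problem at once through Schauder theory, one would need $b\in\mathcal{C}^{\alpha/2,\alpha}$, which is not assumed. The split above is the natural workaround and the choice of $r$ is essentially forced by matching the Sobolev--H\"older embedding exponent with $1+\alpha$.
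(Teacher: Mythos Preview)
Your proposal is correct and follows essentially the same route as the paper: the identical splitting $z=z_1+z_2$, the application of Lunardi's Theorem 5.1.18 to the driftless problem for $z_1$, and the use of the Corollary of Theorem IV.9.1 in LSU with the choice $r=(d+2)/(1-\alpha)$ for $z_2$. There is nothing to add.
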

	
	If the data $f=0$, we can generalize the result of Lemma \ref{sonobravo} if $\psi$ is only a Lipschitz function.
	
	This result is well-known if $a\in\mathcal{C}^2(\Omega)$, by applying a classical Bernstein method. In our framework, we have the following result.
	
	\begin{lem}\label{davverotecnico}
		Suppose $a$ and $b$ be bounded continuous functions, and $\psi\in W^{1,\infty}(\Omega)$. Then the unique solution $z$ of the problem
		\begin{equation}\label{bohmovrim}
			\begin{cases}
				-z_t-\mathrm{tr}(a(x)D^2z)+b(t,x)\cdot Dz=0\,,\\
				z(T)=\psi\,,\\
				aDz\cdot\nu_{|\partial\Omega}=0
			\end{cases}
		\end{equation}
		satisfies a H\"{o}lder condition in $t$ and a Lipschitz condition in $x$, namely $\exists C$ such that
		\begin{align}\label{nnavonmmna}
			|z(t,x)-z(s,x)|\le C\norm{\psi}_{W^{1,\infty}}|t-s|^\miezz\,,\qquad|z(t,x)-z(t,y)|\le C\norm{\psi}_{W^{1,\infty}}|x-y|\,.
		\end{align}
		\begin{proof}
			If $\psi\in\mathcal{C}^{1,N}$, estimates \eqref{nnavonmmna} is guaranteed by \eqref{estensione} of Lemma \ref{sonobravo}.
			
			In the general case, we take $\psi^n\in\mathcal{C}^{1}$ such that $\psi^n\to\psi$ in $\mathcal{C}([0,T]\times\Omega)$ and\\$\norm{\psi^n}_{1}\le C\norm{\psi}_{W^{1,\infty}}$, and we want to make a suitable approximation of it in order to obtain a function $\tilde{\psi}^n\in\mathcal{C}^{1,N}$, also converging to $\psi$.\\
			In order to do that, we first need some useful tools.\\
			For $\delta>0$, $d(\cdot)$ the distance function from $\partial\Omega$, $\Omega_\delta=\{ x\in\Omega\,|\,d(x)\ge\delta \}$ and $x\in\Omega\setminus\Omega_{\delta}$, we consider the following ODE in $\R^d$:
			\begin{align}\label{ODE}
				\begin{cases}
					\xi'(t;x)=-a(\xi(t;x))\nu(\xi(t;x))\,,\\
					\xi(0;x)=x\,,
				\end{cases}
			\end{align}
			where $\nu$ is an extension of the outward unit normal in $\Omega\setminus\Omega_{\delta}$. Actually, we know from \cite{cingul} that $$Dd(x)_{|\partial\Omega}=-\nu(x)\,,$$
			so a suitable extension can be $\nu(x)=-Dd(x)\,$.

			Then we consider the corresponding hitting time of $\partial\Omega_\delta$:
			$$
			T(x):=\inf\left\{t\ge0 \,|\, \xi(t;x)\notin\Omega\setminus\Omega_\delta\right\}\,.
			$$
			We have that $T(x)<+\infty$ for each $x\in\Omega\setminus\Omega_{\delta}$. To prove that, we consider the auxiliary function
			$$
			\Phi(t,x)=\delta-d(\xi(t;x)).
			$$
			So, the function $T(x)$ can be rewritten as
			$$
			T(x)=\inf\left\{ t\ge0\,|\, \Phi(t,x)=0 \right\}\,,
			$$
			and his finiteness is an obvious consequence of the decreasing character of $\Phi$ in time:
			$$
			\partial_t\Phi(t,x)=-Dd(\xi(t;x))\cdot\xi'(t;x)=-\langle a(\xi(t;x))\nu(\xi(t;x)),\nu(\xi(t;x))\rangle\le-\lambda<0\,.
			$$
			Moreover, thanks to Dini's theorem we obtain that $T(x)$ is a $\mathcal{C}^1$ function and his gradient is given by
			$$
			\nabla T(x)=-\frac{\nabla_x\Phi(T(x),x)}{\partial_t\Phi(T(x),x)}=\frac{\nu(\xi(T(x);x))\mathrm{Jac}_x\xi(T(x);x)}{\langle a\nu,\nu\rangle (\xi(T(x);x))}\,.
			$$
			Actually, thanks to the regularity of $a$ and $\Omega$, we can differentiate w.r.t. $x$ the ODE \eqref{ODE} and obtain that $\xi(t;\cdot)\in\mathcal{C}^{1}$.
			
			Now we define the approximating functions $\tilde{\psi}^n$ in the following way:
			\begin{equation}\label{psitildan}
				\tilde{\psi}^n(x)=
				\left\{
				\begin{array}{lll}
					\psi^n(x)\quad & \mbox{if } & x\in\Omega_\delta\,,\\
					\psi^n(\xi(T(x);x))\quad & \mbox{if } & x\in\Omega\setminus\Omega_\delta\,,
				\end{array}
				\right.
			\end{equation}
			eventually considering a $\mathcal{C}^{1}$ regularization in $\Omega_{\delta}\setminus{\Omega_{2\delta}}\,$.
			
			From the definition of $\tilde{\psi}^n$ and the $\mathcal{C}^1$ regularity of $\xi$ and $T$ we have $\tilde{\psi}^n\in\mathcal{C}^1$ and
			$$
			\|{\tilde{\psi}^n}\|_{1}\le C\norm{\psi^n}_1\le C\norm{\psi}_{W^{1,\infty}}\,.
			$$
			Moreover, since near the boundary $\tilde{\psi}^n$ is constant along the trajectories $a(\cdot)\nu(\cdot)$, we have that on $\partial\Omega$
			$$
			a(x)D{\tilde{\psi^n}}(x)\cdot\nu(x)_{|\partial\Omega}=\frac{\partial\tilde{\psi^n}}{\partial (a\nu(x))}(x)_{|\partial\Omega}=0\,,
			$$
			so $\tilde{\psi}^n\in\mathcal{C}^{1,N}$.
			
			Now we consider $z^n$ as the solution of \eqref{bohmovrim} with $\psi$ replaced by $\tilde{\psi}^n$. Then Lemma \ref{sonobravo} implies that $\tilde{\psi}^n$ satisfies
			$$
			\norm{z^n}_{\miezz,1}\le C\|{\tilde{\psi}}\|_1\le C\norm{\psi}_{W^{1,\infty}}\,.
			$$
			Then, Ascoli-Arzel\`a's Theorem tells us that $\exists z$ such that $z^n\to z$ in $\mathcal{C}([0,T]\times\Omega)$. Passing to the limit in the weak formulation of $z^n$, we obtain that $z$ is the unique solution of \eqref{bohmovrim}.
			
			Finally, since $z^n$ satisfies \eqref{nnavonmmna}, we can pass to the pointwise limit when $n\to+\infty$ and obtain the estimate \eqref{nnavonmmna} for $z$. This concludes the Lemma.
		\end{proof}
	\end{lem}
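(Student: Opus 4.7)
My plan is to deduce the lemma from Lemma \ref{sonobravo} (specialized to $\alpha=0$, so that the resulting parabolic space is $\mathcal{C}^{1/2,1}$, which encodes exactly the $1/2$-Hölder estimate in $t$ and the Lipschitz estimate in $x$ claimed here) by a density argument. The only subtlety is that Lemma \ref{sonobravo} requires the terminal datum to lie in $\mathcal{C}^{1+\alpha,N}$, and in particular to satisfy the Neumann compatibility $a D\psi\cdot\nu=0$ on $\partial\Omega$. Mollifying $\psi\in W^{1,\infty}$ produces smooth approximants but generically destroys this boundary condition, so the construction has to be tailored to the boundary.

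First I would take a standard regularization $\psi^n\in\mathcal{C}^1(\Omega)$ with $\psi^n\to\psi$ uniformly and $\|\psi^n\|_1\le C\|\psi\|_{W^{1,\infty}}$. Then, to enforce the Neumann condition, I would modify $\psi^n$ near $\partial\Omega$ by making it constant along the integral curves of $-a\nu$. Concretely, using a smooth extension $\nu(x)=-Dd(x)$ of the outer normal in a tubular neighbourhood $\Omega\setminus\Omega_\delta$, where $d(\cdot)$ is the distance from $\partial\Omega$ and $\Omega_\delta=\{d\ge\delta\}$, consider the flow $\xi'(t;x)=-a(\xi)\nu(\xi)$, $\xi(0;x)=x$. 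Because $\langle a\nu,\nu\rangle\ge\lambda>0$, each trajectory leaves the boundary strip in finite time $T(x)$, and Dini's theorem applied to $\Phi(t,x)=\delta-d(\xi(t;x))$ (whose time derivative is bounded away from $0$ by uniform ellipticity) yields $T\in\mathcal{C}^1$. Defining $\tilde\psi^n(x)=\psi^n(\xi(T(x);x))$ on $\Omega\setminus\Omega_\delta$ and $\tilde\psi^n=\psi^n$ on $\Omega_\delta$, with a $\mathcal{C}^1$ gluing on $\Omega_\delta\setminus\Omega_{2\delta}$, produces $\tilde\psi^n\in\mathcal{C}^{1,N}(\Omega)$ with $\|\tilde\psi^n\|_1\le C\|\psi\|_{W^{1,\infty}}$, and with $a D\tilde\psi^n\cdot\nu=0$ at $\partial\Omega$ automatic from the fact that $\tilde\psi^n$ is constant along $a\nu$ near the boundary.

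Calling $z^n$ the solution of the Neumann backward problem with terminal datum $\tilde\psi^n$, Lemma \ref{sonobravo} with $\alpha=0$ and $f=0$ gives $\|z^n\|_{1/2,1}\le C\|\tilde\psi^n\|_1\le C\|\psi\|_{W^{1,\infty}}$ uniformly in $n$. Ascoli--Arzel\`a then extracts a subsequence converging uniformly to some $z$; passing to the limit in the weak formulation identifies $z$ as the unique solution with datum $\psi$, and the pointwise passage to the limit in the uniform $\mathcal{C}^{1/2,1}$ bound transfers the two inequalities in \eqref{nnavonmmna} from $z^n$ to $z$.

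The main obstacle is really the construction of the approximants: one must simultaneously preserve the Neumann compatibility, control the $W^{1,\infty}$-norm, and retain uniform convergence. The flow-of-$(-a\nu)$ device is the natural one, but it requires the $\mathcal{C}^1$ regularity of the hitting time $T$ and careful control of the gluing strip $\Omega_\delta\setminus\Omega_{2\delta}$ so that neither the $W^{1,\infty}$-norm nor the uniform convergence is spoiled. Once this is secured the rest is standard compactness plus stability under approximation.
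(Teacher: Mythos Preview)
Your proposal is correct and follows essentially the same approach as the paper: approximate $\psi$ by $\mathcal{C}^1$ functions, correct them near the boundary via the flow of $-a\nu$ (with the hitting time $T(x)$ shown to be $\mathcal{C}^1$ through the auxiliary function $\Phi(t,x)=\delta-d(\xi(t;x))$ and the implicit function theorem), apply Lemma \ref{sonobravo} with $\alpha=0$ to the resulting $\tilde\psi^n\in\mathcal{C}^{1,N}$, and pass to the limit via Ascoli--Arzel\`a. The construction, the key lemma invoked, and the compactness argument all coincide with the paper's proof.
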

	
	Now we start with the first estimates for the Master Equation.
	
	The first result is obtained by the study of some regularity properties of the $MFG$ system, uniformly in $m_0$.
	
	\begin{prop}
		The system \eqref{meanfieldgames} with conditions \eqref{fame} has a unique classical solution $(u,m)\in \mathcal{C}^{1+\frac{\alpha}{2},2+\alpha}\times \mathcal{C}([0,T];\mathcal{P}(\Omega))$, and this solution satisfies
		\begin{equation}\label{first}
			\sup\limits_{t_1\neq t_2}\frac{\daw{1}(m(t_1),m(t_2))}{|t_1-t_2|^\frac 1 2}+\norm{u}_{1+\frac{\alpha}{2},2+\alpha}\le C\spazio,
		\end{equation}
		where $C$ does not depend on $(t_0,m_0)$.\\
		Furthermore, $m(t)$ has a positive density for each $t>0$ and, if $m_0\in\mathcal{C}^{2+\alpha}$ and satisfies the Neumann boundary condition
		\begin{equation}\label{neumannmzero}
			\left(a(x)Dm_0+(\tilde{b}(0,x)+H_p(x,Du(0,x)))m_0\right)\cdot\nu_{|\partial\Omega}=0\,,
		\end{equation}
		then $m\in\mathcal{C}^{1+\frac{\alpha}{2},2+\alpha}$.\\
		Finally, the solution is stable: if $m_{0n}\to m_0$ in $\mathcal{P}(\Omega)$, then there is convergence of the corresponding solutions of \eqref{meanfieldgames}-\eqref{fame}: $(u_n,m_n)\to (u,m)$ in $\mathcal{C}^{1,2}\times\mathcal{C}([0,T];\mathcal{P}(\Omega))$.
		\begin{proof}
			We use a Schauder fixed point argument.\\
			Let $X\subset\mathcal{C}([t_0,T];\mathcal{P}(\Omega))$ be the set
			$$
			X:=\left\{m\in\mathcal{C}([t_0,T];\mathcal{P}(\Omega))\mbox{ s.t. }\daw{1}(m(t),m(s))\le L|t-s|^\miezz\ \forall s,t\in[t_0,T] \right\}\spazio,
			$$
			where $L$ is a constant that will be chosen later.\\
			It is easy to prove that $X$ is a convex compact set for the uniform distance.\\
			We define a map $\Phi:X\to X$ as follows.\\
			Given $\beta\in X$, we consider the solution of the following Hamilton-Jacobi equation
			\begin{equation}\label{hj}
				\begin{cases}
					-u_t-\mathrm{tr}(a(x)D^2u)+H(x,Du)=F(x,\beta(t))\,,\\
					u(T)=G(x,\beta(T))\,,\\
					a(x)Du\cdot\nu(x)_{|\partial\Omega}=0\,.
				\end{cases}
			\end{equation}
			Thanks to hypothesis $(iv)$ of \ref{ipotesi}, we have ${F(\cdot,\beta(\cdot))}\in\mathcal{C}^{\frac{\alpha}2,\alpha}$ and its norm is bounded by a constant independent of $\beta$. For the same reason $G(\cdot,\beta(T))\in\mathcal{C}^{2+\alpha}$.\\
			It is well known that these hypotheses guarantee the existence and uniqueness of a classical solution. A proof can be found in \cite{lsu}, \textit{Theorem V.7.4}.\\
			So, we can expand with Taylor formula the gradient term and obtain a linear equation satisfied by $u$:
			\begin{align*}
				\begin{cases}
					-u_t-\mathrm{tr}(a(x)D^2u)+H(x,0)+V(t,x)\cdot Du=F(x,\beta(t))\,,\\
					u(T)=G(x,\alpha(T))\,,\\
					a(x)Du\cdot\nu_{\partial\Omega}=0\,.
				\end{cases}
			\end{align*}
			with
			$$
			V(t,x):=\int_0^1 H_p(x,\lambda Du(t,x))\spazio d\lambda\spazio.
			$$
			Thanks to the Lipschitz hypothesis on $H$, $(ii)$ of \ref{ipotesi}, we know that $V\in L^\infty$. So, we can use the Corollary of \emph{Theorem IV.9.1} of \cite{lsu} to obtain
			$$
			Du\in\mathcal{C}^{\frac{\alpha}{2},\alpha}\implies V\in\mathcal{C}^{\frac{\alpha}2,\alpha}\spazio.
			$$
			So, we can apply \emph{Theorem IV.5.3} of \cite{lsu} and get
			\begin{align*}
				\norm{u}_{1+\frac{\alpha}{2},2+\alpha}\le C\left(\norm{F}_{\frac{\alpha}{2},\alpha}+\norm{G}_{2+\alpha}\right)\spazio,
			\end{align*}
			where the constant $C$ does not depend on $\beta$, $t_0$, $m_0$.\\
			Now, we define $\Phi(\beta)=m$, where $m\in\mathcal{C}([t_0,T];\mathcal{P}(\Omega))$ is the solution of the Fokker-Planck equation
			\begin{equation}\label{fpk}
				\begin{cases}
					m_t-\mathrm{div}(a(x)Dm)-\mathrm{div}(m(\tilde{b}(x)+H_p(x,Du)))=0\,,\\
					m(t_0)=m_0\,,\\
					\left(a(x)Dm+(\tilde{b}+H_p(x,Du))m\right)\cdot\nu_{|\partial\Omega}=0\,.
				\end{cases}
			\end{equation}
			It is easy to prove that the above equation has a unique solution in the sense of distribution. A proof in a more general case will be given in the next section, in Proposition \ref{peggiodellagerma}. We want to check that $m\in X$.\\
			Thanks to the distributional formulation, we have
			\begin{equation}\begin{split}\label{sotis}
					&\into \phi(t,x)m(t,dx)-\into\phi(s,x)m(s,dx)\\\,+&\int_s^t\into(-\phi_t-\mathrm{tr}(a(x)D^2\phi)+H_p(x,Du)\cdot D\phi)m(r,dx)dr=0\spazio,
			\end{split}\end{equation}
			for each $\phi\in L^{\infty}$ satisfying in the weak sense
			$$
			\begin{cases}
				-\phi_t-\mathrm{tr}(a(x)D^2\phi)+H_p(x,Du)\cdot D\phi\in L^\infty(Q_T)\\
				aD\phi\cdot\nu_{|\partial\Omega}=0
			\end{cases}\spazio.
			$$
			Take $\psi(\cdot)$ a $1$-Lipschitz function in $\Omega$. So, we choose $\phi$ in the weak formulation as the solution in $[t,T]$ of the following linear equation
			\begin{equation}\label{coglia}\begin{cases}
					-\phi_t-\mathrm{tr}(a(x)D^2\phi)+H_p(x,Du)\cdot D\phi=0\,,\\
					\phi(t)=\psi\,,\\
					a(x)D\phi\cdot\nu_{|\partial\Omega}=0\,.
				\end{cases}
			\end{equation}
			Thanks to Lemma \ref{davverotecnico}, we know that $\phi(\cdot,x)\in\mathcal{C}^{\miezz}([0,T])$ and its H\"{o}lder norm in time is bounded uniformly if $\psi$ is $1$-Lipschitz.\\
			Coming back to \eqref{sotis}, we obtain
			\begin{align*}
				\into\psi(x)(m(t,dx)-m(s,dx))=\into(\phi(t,x)-\phi(s,x))m(s,dx)\le C|t-s|^\miezz\spazio,
			\end{align*}
			and taking the $\sup$ over the $\psi$ $1$-Lipschitz,
			$$
			\dw(m(t),m(s))\le C|t-s|^\miezz\spazio.
			$$
			Choosing $L=C$, we have proved that $m\in X$.\\
			Since $X$ is convex and compact, to apply Schauder's theorem we only need to show the continuity of $\Phi$.\\
			Let $\beta_n\to\beta$, and let $u_n$ and $m_n$ the solutions of \eqref{hj} and \eqref{fpk} related to $\beta_n$. Since $\{u_n\}_n$ is uniformly bounded in $\mathcal{C}^{1+\frac{\alpha}{2},2+\alpha}$, from Ascoli-Arzel\`a's Theorem we have $u_n\to u$ in $\mathcal{C}^{1,2}$.\\
			To prove the convergence of $\{m_n\}_n$, we take $\phi_n$ as the solution of \eqref{coglia} with $Du$ replaced by $Du_n$. Then, as before, $\{\phi_n\}_n$ is a Cauchy sequence in $\mathcal{C}^1$. Actually, the difference $\phi_{n,m}:=\phi_n-\phi_m$ satisfies
			$$
			\begin{cases}
				-(\phi_{n,m})_t-\mathrm{tr}(a(x)D^2\phi_{n,m})+H_p(x,Du_n)\cdot D\phi_{n,m}=(H_p(x,Du_m)-H_p(x,Du_n))\cdot D\phi_m\,,\\
				\phi_{n,m}(t)=0\,,\\
				\bdone{\phi_{n,m}}\,,
			\end{cases}
			$$
			and so Lemma \ref{sonobravo} implies
			$$
			\norm{\phi_{n,m}}\amu\le C\norminf{(H_p(x,Du_m)-H_p(x,Du_n))\cdot D\phi_m}\le C\norminf{Du_m-Du_n}\le \omega(n,k)\,,
			$$
			where $\omega(n,k)\to 0$ when $n,k\to\infty$, and where we use Lemma \ref{davverotecnico} in order to bound $D\phi_m$ in $L^\infty$, without compatibility conditions.\\
			Using \eqref{sotis} with $(m_n,\phi_n)$ and $(m_k,\phi_k)$, for $n,k\in\mathbb{N}$, $s=0$, and subtracting the two equalities, we get
			\begin{align*}
				\into\psi(x)(m_n(t,dx)-m_k(t,dx))=\into(\phi_n(0,x)-\phi_k(0,x))m_0(dx)\le\omega(n,k)\,.
			\end{align*}
			Taking the sup over the $\psi$ $1$-Lipschitz and over $t\in[0,T]$, we obtain
			\begin{align*}
				\sup\limits_{t\in[0,T]}\dw(m_n(t)),m_k(t))\le\omega(n,k)\,,
			\end{align*}
			which proves that $\{m_n\}_n$ is a Cauchy sequence in $X$. Then, $\exists m$ such that $m_n\to m$ in $X$.\\
			Passing to the limit in \eqref{fpk}, we immediately obtain $m=\Phi(\beta)$, which conclude the proof of continuity.\\
			So we can apply Schauder's theorem and obtain a classical solution of the problem \eqref{meanfieldgames}-\eqref{fame}. The estimate \eqref{first} follows from the above estimates for \eqref{hj} and \eqref{fpk}.\\
			To prove the uniqueness, let $(u_1,m_1)$, $(u_2,m_2)$ be two solutions of \eqref{meanfieldgames}-\eqref{fame}.\\
			We use inequality \eqref{dopo}, whose proof will be given in the next lemma, with $m_{01}(t_0)=m_{02}(t_0)=m_0$:
			\begin{align*}
				&\intc{t_0}\mathlarger{(}H(x,Du_2)-H(x,Du_1)-H_p(x,Du_1)(Du_2-Du_1)\mathlarger{)}m_1(t,dx)dt\spazio+\\
				+&\intc{t_0}\mathlarger{(}H(x,Du_1)-H(x,Du_2)-H_p(x,Du_2)(Du_1-Du_2)\mathlarger{)}m_2(t,dx)dt\le 0
			\end{align*}
			Since $H$ is strictly convex, the above inequality gives us $Du_1=Du_2$ in the set\\ $\{m_1>0\}\cup\{m_2>0\}$. Then $m_1$ and $m_2$ solve the same Fokker-Planck equation, and for uniqueness we have $m_1=m_2$.\\
			So $F(x,m_1(t))=F(x,m_2(t))$, $G(x,m_1(T))=G(x,m_2(T))$ and $u_1$ and $u_2$ solve the same Hamilton-Jacobi equation, which implies $u_1=u_2$. The proof of uniqueness is complete.\\
			Finally, if $m_0\in\mathcal{C}^{2+\alpha}$ satisfies \eqref{neumannmzero}, then, splitting the divergence terms in \eqref{fpk}, we have
			\begin{equation*}
				\begin{cases}
					m_t-\mathrm{tr}(a(x)D^2m)-m\spazio\mathrm{div}\left(\tilde{b}(x)+H_p(x,Du)\right)-\left(2\tilde{b}(x)+H_p(x,Du)\right)Dm=0\\
					m(t_0)=m_0\\
					\left(a(x)Dm+(\tilde{b}+H_p(x,Du))m\right)\cdot\nu_{|\partial\Omega}=0
				\end{cases}\spazio.
			\end{equation*}
			Then, thanks to \textit{Theorem IV.5.3} of \cite{lsu}, $m$ is of class $\mathcal{C}^{1+\frac{\alpha}{2},2+\alpha}$.\\
			The stability of solutions is obtained in the same way we used for the continuity of $\Phi$. This concludes the proof.
		\end{proof}
	\end{prop}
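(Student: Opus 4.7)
The plan is to proceed by a Schauder fixed point argument on the coupling between the two equations, following the classical decoupling scheme. I would fix $t_0$, $m_0$ and introduce the convex set
\[
X=\bigl\{\beta\in\mathcal{C}([t_0,T];\mathcal{P}(\Omega))\,:\,\dw(\beta(t),\beta(s))\le L|t-s|^{1/2}\bigr\},
\]
with $L$ to be chosen later, which is compact for the uniform topology on $\mathcal{C}([t_0,T];\mathcal{P}(\Omega))$. Given $\beta\in X$, hypotheses $(iv)$-$(v)$ yield that $F(\cdot,\beta(\cdot))$ and $G(\cdot,\beta(T))$ are bounded in the relevant Hölder norms uniformly in $\beta$; solving the Hamilton--Jacobi--Bellman equation with Neumann condition $aDu\cdot\nu=0$ produces a classical solution $u$, and after linearizing the gradient term via Taylor into a drift $V(t,x)=\int_0^1 H_p(x,\lambda Du)\,d\lambda\in L^\infty$, the parabolic Schauder estimates of \cite{lsu} give $\|u\|_{1+\alpha/2,2+\alpha}\le C$ with $C$ independent of $\beta$, $t_0$, $m_0$.

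Next, setting $\Phi(\beta)=m$ where $m$ solves the Fokker--Planck equation \eqref{fpk} with drift built from $u$, I need to prove the self-map property $\Phi(X)\subset X$. The only informative way to test $m(t)-m(s)$ against a $1$-Lipschitz $\psi$ is by duality: solve backward in time
\[
-\phi_t-\mathrm{tr}(a(x)D^2\phi)+H_p(x,Du)\cdot D\phi=0,\qquad \phi(t)=\psi,\qquad aD\phi\cdot\nu_{|\partial\Omega}=0,
\]
and use $\phi$ in the distributional formulation. Since $\psi$ is only Lipschitz and will not satisfy the Neumann compatibility condition in general, the main technical obstacle is precisely to control $\phi$ in Lipschitz norm in space and Hölder-$1/2$ in time without any compatibility assumption; this is exactly Lemma~\ref{davverotecnico}, which delivers $|\phi(t,x)-\phi(s,x)|\le C|t-s|^{1/2}\|\psi\|_{W^{1,\infty}}$, and hence $\dw(m(t),m(s))\le C|t-s|^{1/2}$. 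Choosing $L=C$ shows $m\in X$.

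For continuity of $\Phi$, if $\beta_n\to\beta$ in $X$, Ascoli--Arzelà applied to the uniform bound on $\{u_n\}$ in $\mathcal{C}^{1+\alpha/2,2+\alpha}$ gives $u_n\to u$ in $\mathcal{C}^{1,2}$. For $m_n$, I would form $\phi_{n,k}:=\phi_n-\phi_k$ where $\phi_n$ is the dual backward solution using drift $H_p(x,Du_n)$; its equation has zero terminal datum and right-hand side $(H_p(x,Du_k)-H_p(x,Du_n))\cdot D\phi_k\in L^\infty$, so Lemma~\ref{sonobravo} with $\psi\equiv 0$ gives $\|\phi_{n,k}\|_{(1+\alpha)/2,1+\alpha}\to 0$, where the $L^\infty$ bound on $D\phi_k$ comes once more from Lemma~\ref{davverotecnico}. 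Testing the duality identity at $s=t_0$ and taking the supremum over $1$-Lipschitz $\psi$ and over $t$ yields $\sup_t\dw(m_n(t),m_k(t))\to 0$, hence $m_n\to m$ in $X$, and passing to the limit in the weak formulation identifies $m=\Phi(\beta)$. Schauder's theorem then gives existence, and the estimate \eqref{first} follows.

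Uniqueness I would deduce from the standard Lasry--Lions monotonicity trick: subtracting the two HJB equations, testing against $m_1-m_2$ and using monotonicity of $F$, $G$ together with the identity \eqref{dopo} (proved in the subsequent lemma) forces $Du_1=Du_2$ on $\{m_1>0\}\cup\{m_2>0\}$, so both measures solve the same Fokker--Planck equation and agree, and then so do $u_1$, $u_2$. Positivity of $m(t)$ for $t>0$ follows from the strong maximum principle applied to \eqref{fpk}. For the extra regularity under \eqref{neumannmzero}, splitting the divergence in \eqref{fpk} turns it into a non-divergence Neumann parabolic problem with compatible $\mathcal{C}^{2+\alpha}$ initial datum, to which Theorem IV.5.3 of \cite{lsu} applies and gives $m\in\mathcal{C}^{1+\alpha/2,2+\alpha}$. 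Finally, the stability statement is obtained by repeating verbatim the continuity argument for $\Phi$, now with $m_{0n}\to m_0$ entering through the duality identity tested at time $t_0$. The heart of the whole proof is the Lipschitz/$1/2$-Hölder control of the backward dual equation without compatibility conditions at the boundary, which is precisely what forced the earlier groundwork in Lemmas~\ref{sonobravo}--\ref{davverotecnico}.
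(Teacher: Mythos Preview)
Your proposal is correct and follows essentially the same route as the paper: Schauder fixed point on the same set $X$, the same duality argument with the backward equation and Lemmas~\ref{sonobravo}--\ref{davverotecnico} to get the $1/2$-H\"older modulus for $m$, the same continuity argument via $\phi_{n,k}$, uniqueness by the Lasry--Lions inequality \eqref{dopo}, extra regularity by splitting the divergence and invoking \emph{Theorem IV.5.3} of \cite{lsu}, and stability by repeating the continuity step. You even make explicit the strong maximum principle for the positivity of $m(t)$, which the paper only states.
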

	With this proposition, we have obtained that
	\begin{equation}\label{firstmaster}
		\sup\limits_{t\in[0,T]}\sup\limits_{m\in\mathcal{P}(\Omega)}\norm{U(t,\cdot,m)}_{2+\alpha}\le C\spazio,
	\end{equation}
	which gives us an initial regularity result for the function $U$.\\
	To complete the previous proposition, we need the following lemma, based on the so-called \textit{Lasry-Lions monotonicity argument}.
	\begin{lem}
		Let $(u_1,m_1)$ and $(u_2,m_2)$ be two solutions of System \eqref{meanfieldgames}-\eqref{fame}, with $m_1(t_0)=m_{01}$, $m_2(t_0)=m_{02}$. Then
		\begin{equation}\begin{split}\label{dopo}
				&\intc{t_0}\mathlarger{(}H(x,Du_2)-H(x,Du_1)-H_p(x,Du_1)(Du_2-Du_1)\mathlarger{)}m_1(t,dx)dt\\
				+&\intc{t_0}\mathlarger{(}H(x,Du_1)-H(x,Du_2)-H_p(x,Du_2)(Du_1-Du_2)\mathlarger{)}m_2(t,dx)dt\\\le-&\into(u_1(t_0,x)-u_2(t_0,x))(m_{01}(dx)-m_{02}(dx))\spazio.
			\end{split}
		\end{equation}
		\begin{proof}
			See \emph{Lemma 3.1.2} of \cite{card}.
		\end{proof}
	\end{lem}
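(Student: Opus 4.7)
The plan is to carry out the Lasry–Lions monotonicity argument in the Neumann setting, using the weak formulation of Fokker–Planck already established in equation \eqref{sotis}. Since each $u_i$ is classical, satisfies $aDu_i\cdot\nu_{|\partial\Omega}=0$, and solves a Hamilton–Jacobi–Bellman equation whose right-hand side is bounded, $u_i$ is an admissible test function in \eqref{sotis} for the Fokker–Planck equation driven by either $u_1$ or $u_2$. So I will test the weak formulation of $m_j$ against $u_i$ for all four pairs $(i,j)\in\{1,2\}^2$.

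Concretely, I would substitute $-\partial_t u_i-\mathrm{tr}(a(x)D^2u_i)=F(x,m_i)-H(x,Du_i)$ inside \eqref{sotis}, obtaining for each pair $(i,j)$ the identity
\begin{align*}
&\into G(x,m_i(T))\,m_j(T,dx)-\into u_i(t_0,x)\,m_{0j}(dx)\\
&\qquad+\intc{t_0}\bigl(F(x,m_i)-H(x,Du_i)+H_p(x,Du_j)\cdot Du_i\bigr)\,m_j(t,dx)\,dt=0.
\end{align*}
Then I form the combination (pair $(1,2)$)$+$(pair $(2,1)$)$-$(pair $(1,1)$)$-$(pair $(2,2)$). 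The terminal terms collapse into $-\into(G(x,m_1(T))-G(x,m_2(T)))(m_1-m_2)(T,dx)$, which is $\le 0$ by the monotonicity of $G$ in hypothesis $(v)$; the $F$-terms collapse to $-\intc{t_0}(F(x,m_1)-F(x,m_2))(m_1-m_2)\,dt\le 0$ by hypothesis $(iv)$; and the initial terms collect to $\into(u_1-u_2)(t_0,x)(m_{02}-m_{01})(dx)$, i.e. the right-hand side of \eqref{dopo} up to sign.

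The remaining $H$-terms, when regrouped by $m_1$ and $m_2$, give precisely
\[
\intc{t_0}\bigl[H(x,Du_2)-H(x,Du_1)-H_p(x,Du_1)(Du_2-Du_1)\bigr]\,m_1(t,dx)\,dt
\]
plus the symmetric expression on $m_2$, which are the two quantities appearing on the left of \eqref{dopo}. Discarding the nonpositive $F$ and $G$ contributions yields the desired inequality.

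The only delicate point is justifying the use of $u_i$ as a test function in the weak formulation \eqref{sotis} for $m_j$. This relies on three facts already at hand: $u_i\in\mathcal{C}^{1+\alpha/2,2+\alpha}$ by \eqref{first}, so $-\partial_t u_i-\mathrm{tr}(aD^2 u_i)+H_p(x,Du_j)\cdot Du_i$ is bounded; the Neumann compatibility $aDu_i\cdot\nu_{|\partial\Omega}=0$ is built into the HJB boundary condition in \eqref{fame}; and the Neumann condition on $m_j$ in \eqref{fpk} ensures that the double integration by parts producing \eqref{sotis} leaves no boundary contribution. Once this admissibility is verified, the algebraic combination is routine and no further estimates are required. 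I expect no genuine obstacle beyond keeping the signs straight and observing that the Neumann boundary structure of both equations is exactly what allows the standard periodic-case computation of \cite{card} to go through verbatim.
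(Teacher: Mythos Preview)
Your proposal is correct and is exactly the Lasry--Lions duality argument that the paper defers to \cite{card}; the Neumann conditions on $u_i$ and $m_j$ are precisely what make $u_i$ admissible in the weak formulation \eqref{sotis}, so the periodic-case computation carries over verbatim. One caution: with the combination $(1,2)+(2,1)-(1,1)-(2,2)$ the initial terms actually collect to $+\into(u_1-u_2)(t_0,x)(m_{01}-m_{02})(dx)$ rather than the opposite sign (the paper's stated inequality \eqref{dopo} appears to carry a sign typo, as is visible from how it is used in the first step of Proposition~\ref{holder}), but this is a cosmetic bookkeeping issue and does not affect the argument or its applications.
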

	\section{Lipschitz continuity of $U$}
	\begin{prop}\label{holder}
		Let $(u_1,m_1)$ and $(u_2,m_2)$ be two solutions of system \eqref{meanfieldgames}-\eqref{fame}, with $m_1(t_0)=m_{01}$, $m_2(t_0)=m_{02}$. Then
		\begin{equation}\begin{split}\label{lipsch}
				\norm{u_1-u_2}\amv&\le C\dw(m_{01},m_{02})\,,\\
				\sup\limits_{t\in[t_0,T]}\dw(m_1(t),m_2(t))& \le C\dw(m_{01},m_{02})\,,
		\end{split}\end{equation}
		where $C$ does not depend on $t_0$, $m_{01}$, $m_{02}$. In particular
		\begin{equation*}
			\sup\limits_{t\in[0,T]}\sup_{m_1\neq m_2}\left[\left(\dw(m_1,m_2)\right)^{-1}\norm{U(t,\cdot,m_1)-U(t,\cdot,m_2)}_{2+\alpha}\right]\le C\,.
		\end{equation*}
		\emph{
			So, the solution of the Master Equation is Lipschitz continuous in the measure variable. This will be essential in order to prove the $\mathcal{C}^1$ character of $U$ with respect to $m$.
		}
		\begin{proof}
			For simplicity, we show the result for $t_0=0$.\\
			\textit{First step: An initial estimate.} Thanks to the hypotheses on $H$ and the Lipschitz bound of $u_1$ and $u_2$, \eqref{dopo} implies
			\begin{align*}
				&\intif|Du_1-Du_2|^2(m_1(t,dx)+m_2(t,dx))dt\le\\\le C&\into (u_1(0,x)-u_2(0,x))(m_{01}(dx)-m_{02}(dx))\le C\norm{u_1-u_2}\amu\dw(m_{01},m_{02}).
			\end{align*}
			\textit{Second step: An estimate on $m_1-m_2$}. We call $m:=m_1-m_2$. We take $\phi$ a sufficiently regular function satisfying $aD\phi\cdot\nu=0$, which will be chosen later. By subtracting the weak formulations \eqref{sotis} of $m_1$ and $m_2$ for $s=0$ and for $\phi$ as test function, we obtain
			\begin{equation}\label{immigrato}
				\begin{split}
					&\into\phi(t,x)m(t,dx)+\inti\left(-\phi_t-\mathrm{tr}(a(x)D^2\phi)+H_p(x,Du_1)D\phi\right)m(s,dx)ds+\\+&\inti(H_p(x,Du_1)-H_p(x,Du_2))D\phi\spazio m_2(s,dx)ds=\into\phi(0,x)(m_{01}(dx)-m_{02}(dx))\,.
				\end{split}
			\end{equation}
			We choose $\phi$ as the solution of \eqref{coglia} related to $u_1$, with terminal condition $\psi\in W^{1,\infty}$. Using the Lipschitz continuity of $H_p$ with respect to $p$, we get
			\begin{align*}
				\into \psi(x) m(t,dx)\le C\inti |Du_1-Du_2| m_2(s,dx)ds+C\dw(m_{01},m_{02})\spazio,
			\end{align*}
			since, for Lemma \ref{davverotecnico}, $\phi$ is Lipschitz continuous with a constant bounded uniformly if $\psi$ is $1$-Lipschitz.\\
			Now we use the Young's inequality and the first step to obtain
			\begin{align*}
				\into \psi(x) m(t,dx)\le\spazio&C\left(\inti |Du_1-Du_2|^2 m_2(s,dx)\right)^\miezz+C\dw(m_{01},m_{02})\le\\\le& \spazio C\left(\norm{u_1-u_2}\amu^\miezz\dw(m_{01},m_{02})^\miezz+\dw(m_{01},m_{02})\right)\spazio,
			\end{align*}
			and finally, taking the sup over the $\psi$ $1$-Lipschitz and the over $t\in[0,T]$,
			\begin{equation}\label{secondstep}
				\sup\limits_{t\in[0,T]}\dw(m_1(t),m_2(t))\le C\left(\norm{u_1-u_2}\amu^\miezz\dw(m_{01},m_{02})^\miezz+\dw(m_{01},m_{02})\right)\spazio.
			\end{equation}
			\textit{Third step: Estimate on $u_1-u_2$ and conclusion.} We call $u:=u_1-u_2$. Then $u$ solves the following equation
			\begin{equation*}
				\begin{cases}
					-u_t-\mathrm{tr}(a(x)D^2 u)+V(t,x)Du=f(t,x)\\
					u(T)=g(x)\\
					a(x)Du\cdot\nu_{|\partial\Omega}=0
				\end{cases}\spazio,
			\end{equation*}
			where
			\begin{align*}
				&V(t,x)=\int_0^1 H_p(x,\lambda Du_1(t,x)+(1-\lambda)Du_2(t,x)d\lambda\spazio;\\
				&f(t,x)=\int_0^1\into\dm{F}(x,\lambda m_1(t)+(1-\lambda)m_2(t),y)(m_1(t,dy)-m_2(t,dy))d\lambda\spazio;\\
				&g(x)=\int_0^1\into\dm{G}(x,\lambda m_1(T)+(1-\lambda)m_2(T),y)(m_1(T,dy)-m_2(T,dy))d\lambda\spazio.
			\end{align*}
			From the regularity of $u_1$ and $u_2$, we have $V(t,\cdot)$ bounded in $\mathcal{C}^{\frac{\alpha}{2},\alpha}$. \\
			We want to apply \emph{Theorem 5.1.21} of \cite{lunardi}. To do this, we have to estimate
			$
			\sup\limits_t\norm{f(t,\cdot)}_\alpha
			$
			
			First, we call
			$$
			m_\lambda(\cdot):=\lambda m_1(\cdot)+(1-\lambda)m_2(\cdot)\spazio.
			$$
			We get
			\begin{align*}
				\sup\limits_{t\in[0,T]}\norm{f(t,\cdot)}_{\alpha}&\le\sup\limits_{t\in[0,T]}\int_0^1\norm{D_y\dm{F}(\cdot,m_\lambda(t),\cdot)}_{\alpha,\infty}d\lambda\,\dw(m_1(t),m_2(t))\\
				&\le C\sup\limits_{t\in[0,T]}\dw(m_1(t),m_2(t))\spazio,
			\end{align*}
			where $C$ depends on the constant $C_F$ in hypotheses \ref{ipotesi}.\\
			In the same way
			\begin{align}
				\norm{g(\cdot)}_{2+\alpha}\le C\sup\limits_{r\in[0,T]}\dw(m_1(r),m_2(r))\,.
			\end{align}
			So we can apply \emph{Theorem 5.1.21} of \cite{lunardi} and obtain
			\begin{equation}\label{finalcountdown}
				\begin{split}
					\norm{u_1-u_2}\amv&\le C\sup\limits_{r\in[0,T]}\dw(m_1(r),m_2(r))\,.
				\end{split}
			\end{equation}
			Coming back to \eqref{secondstep}, this implies
			\begin{align*}
				\sup\limits_{t\in[0,T]}&\dw(m_1(t),m_2(t))\le\\&\le C\left(\left(\sup\limits_{r\in[0,T]}\dw(m_1(r),m_2(r))\right)^{\miezz}\dw(m_{01},m_{02})^\miezz+\dw(m_{01},m_{02})\right)\spazio,
			\end{align*}
			and, using a generalized Young's inequality, this allows us to conclude:
			\begin{align}\label{oterz}
				\sup\limits_{t\in[0,T]}&\dw(m_1(t),m_2(t))\le C\dw(m_{01},m_{02})\,.
			\end{align}
			Plugging this estimate in \eqref{finalcountdown}, we finally obtain
			\begin{align*}
				&\norm{u_1-u_2}\amv\le C\dw(m_{01},m_{02})\label{osicond}\,.\\
			\end{align*}
		\end{proof}
	\end{prop}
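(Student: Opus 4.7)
The plan is to follow the three-step strategy that is standard for Lasry--Lions monotonicity estimates, adapted to the Neumann setting by using Lemma \ref{davverotecnico} as the substitute for periodic/Lipschitz duality. Throughout I would take $t_0=0$ without loss of generality.

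\textbf{Step 1 (energy-type bound from monotonicity).} I would apply the monotonicity inequality \eqref{dopo} to the pair $(u_1,m_1)$, $(u_2,m_2)$. The uniform convexity of $H$ in $p$, namely $H_{pp}\ge c>0$ (which follows from hypothesis $(iii)$ of \ref{ipotesi} together with the uniform $C^{2+\alpha}$ bound on $u_1,u_2$ from \eqref{first}), turns the left-hand side into a lower bound $c\intif |Du_1-Du_2|^2(m_1+m_2)\,dx\,dt$. The right-hand side is controlled by
\[
\into (u_1(0,x)-u_2(0,x))(m_{01}-m_{02})(dx)\le \norm{u_1-u_2}\amu\,\dw(m_{01},m_{02}),
\]
using the dual characterization \eqref{wass}. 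This yields the fundamental quadratic bound on the weighted gradient difference.

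\textbf{Step 2 (transporting the estimate to $m_1-m_2$).} Let $m=m_1-m_2$. For an arbitrary $1$-Lipschitz test function $\psi$, I would choose $\phi$ as the solution of the linear backward problem \eqref{coglia} with drift $H_p(x,Du_1)$ and terminal datum $\psi$. Subtracting the distributional formulations \eqref{sotis} for $m_1$ and $m_2$ against $\phi$ gives an identity of the form
\[
\into \psi\,m(t,dx)=\into \phi(0,x)(m_{01}-m_{02})(dx)+\inti(H_p(x,Du_2)-H_p(x,Du_1))\cdot D\phi\,m_2\,dx\,ds.
\]
Here Lemma \ref{davverotecnico} is crucial: it guarantees that $\phi$ is Lipschitz in $x$ uniformly in $\psi$, \emph{without} requiring $\psi\in\mathcal{C}^{1,N}$ (compatibility with the Neumann condition is absent). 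Using the Lipschitz character of $H_p$ and Cauchy--Schwarz against the weight $m_2$, I combine with Step 1 to obtain
\[
\sup_{t\in[0,T]}\dw(m_1(t),m_2(t))\le C\bigl(\norm{u_1-u_2}\amu^{\miezz}\dw(m_{01},m_{02})^{\miezz}+\dw(m_{01},m_{02})\bigr).
\]

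\textbf{Step 3 (parabolic estimate for $u=u_1-u_2$ and closing the loop).} The difference $u$ solves a linear backward Neumann problem with drift $V(t,x)=\int_0^1 H_p(x,\lambda Du_1+(1-\lambda)Du_2)\,d\lambda\in\mathcal{C}^{\alpha/2,\alpha}$, source $f$ coming from $F$, and terminal $g$ coming from $G$, written via the fundamental-theorem-of-calculus in $\lambda$ through $\delta F/\delta m$ and $\delta G/\delta m$. Using hypothesis $(iv)$--$(v)$ of \ref{ipotesi}, together with the dual characterization of $\dw$ in \eqref{wass}, gives
\[
\sup_t\norm{f(t,\cdot)}_\alpha+\norm{g}_{2+\alpha}\le C\sup_{t\in[0,T]}\dw(m_1(t),m_2(t)).
\]
The Neumann compatibility in hypothesis $(vi)$ for $G$ ensures $g$ satisfies the boundary compatibility needed to invoke \emph{Theorem 5.1.21} of \cite{lunardi}, yielding the full $\mathcal{C}^{1,2+\alpha}$ bound on $u$ by the same right-hand side. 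Substituting this into the Step 2 estimate produces a self-referential inequality of the form $X\le C(X^{\miezz}Y^{\miezz}+Y)$ in $X=\sup_t\dw(m_1(t),m_2(t))$ and $Y=\dw(m_{01},m_{02})$; Young's inequality absorbs the half-power term and yields $X\le CY$. Plugging this back into the $u$-estimate completes \eqref{lipsch}, and the statement about $U$ follows immediately from the definition \eqref{U}.

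\textbf{Main obstacle.} The delicate point is that $u$ is estimated in $\mathcal{C}^{1,2+\alpha}$, which requires sharp Schauder/maximal-regularity bounds for a Neumann parabolic equation whose terminal datum is merely built from $\delta G/\delta m$; this is exactly where hypothesis $(vi)$ and the $2+\alpha$ regularity of $\delta G/\delta m$ enter. A second subtle point is the choice of duality: Lemma \ref{davverotecnico} must be invoked (rather than Lemma \ref{sonobravo}) because the terminal $\psi$ in the dual problem is only Lipschitz and need not satisfy the Neumann compatibility condition.
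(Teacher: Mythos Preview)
Your proposal is correct and follows essentially the same three-step strategy as the paper's proof: the Lasry--Lions energy bound from \eqref{dopo}, the duality argument for $m_1-m_2$ via Lemma \ref{davverotecnico}, and the closing Schauder estimate on $u_1-u_2$ through \emph{Theorem 5.1.21} of \cite{lunardi}. Your identification of the two delicate points---the Neumann compatibility from hypothesis $(vi)$ for $g$, and the need for Lemma \ref{davverotecnico} rather than Lemma \ref{sonobravo} because the terminal datum $\psi$ is merely Lipschitz---matches exactly where the paper's argument departs from the periodic case.
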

	\section{Linearized system and differentiability of $U$ with respect to the measure}
	The proof of existence and uniqueness of solutions for the Master Equation strongly relies on the $\mathcal{C}^1$ character of $U$ with respect to $m$.\\
	The definition of the derivative $\frac{\delta U}{\delta m}$ is strictly related to the solution $(v,\mu)$ of the following \emph{linearized system}:
	\begin{equation}\label{linDuDm}
		\begin{cases}
			-v_t-\mathrm{tr}(a(x)D^2v)+H_p(x,Du)\cdot Dv=\mathlarger{\frac{\delta F}{\delta m}}(x,m(t))(\mu(t))\,,\\
			\mu_t-\mathrm{div}(a(x)D\mu)-\mathrm{div}(\mu(H_p(x,Du)+\tilde{b}))-\mathrm{div}(mH_{pp}(x,Du)Dv)=0\,,\\
			v(T,x)=\mathlarger{\frac{\delta G}{\delta m}}(x,m(T))(\mu(T))\,,\qquad \mu(t_0)=\mu_0\,,\\
			a(x)Dv\cdot\nu_{|\partial\Omega}=0\,,\hspace{1cm}\left(a(x)D\mu+\mu(H_p(x,Du)+\tilde{b})+mH_{pp}(x,Du)Dv\right)\cdot\nu_{|\partial\Omega}=0\,,
		\end{cases}
	\end{equation}
	where we use the notation
	$$
	{\dm{F}}(x,m(t))(\rho(t)):=\left\langle{\dm{F}}(x,m(t),\cdot),\rho(t)\right\rangle
	$$
	and the same for $G$.\\
	
	We want to prove that this system admits a solution and that the following equality holds:
	
	\begin{equation}\label{reprform}
		v(t_0,x)=\left\langle\frac{\delta U}{\delta m}(t_0,x,m_0,\cdot),\mu_0\right\rangle\,. 
	\end{equation}
	
	First, we have to analyze separately the well-posedness of the Fokker-Planck equation in distribution sense:
	\begin{equation}\label{linfp}
		\begin{cases}
			\mu_t-\mathrm{div}(a(x)D\mu)-\mathrm{div}(\mu b)=f\,,\\
			\mu(0)=\mu_0\,,\\
			\left(a(x)D\mu+\mu b\right)\cdot\nu_{|\partial\Omega}=0\,,
		\end{cases}
	\end{equation}
	where $f\in L^1(W^{-1,\infty})$, $\mu_0\in\mathcal{C}^{-(1+\alpha)}$, $b\in L^\infty$.
	
	A suitable distributional definition of solution is the following:
	\begin{defn}\label{canzonenuova}
		Let $f\in L^1(W^{-1,\infty})$, $\mu_0\in\mathcal{C}^{-(1+\alpha)}$, $b\in L^\infty$. We say that a function $\mu\in\mathcal{C}([0,T];\mathcal{C}^{-(1+\alpha),N})\cap L^1(Q_T)$ is a weak solution of \eqref{linfp} if, for all $\psi\in L^\infty(\Omega)$, $\xi\in\mathcal{C}^{1+\alpha,N}$ and $\phi$ solution in $[0,t]\times\Omega$ of the following linear equation
		\begin{equation}\label{hjbfp}
			\begin{cases}
				-\phi_t-\mathrm{div}(aD\phi)+bD\phi=\psi\,,\\
				\phi(t)=\xi\,,\\
				aD\phi\cdot\nu_{|\partial\Omega}=0\,,
			\end{cases}
		\end{equation}
		the following formulation holds:
		\begin{equation}\label{weakmu}
			\langle \mu(t),\xi\rangle+\inti\mu(s,x)\psi(s,x)\,dxds=\langle\mu_0,\phi(0,\cdot)\rangle+\int_0^t\langle f(s),\phi(s,\cdot) \rangle\,ds\,,
		\end{equation}
		where $\langle \cdot,\cdot\rangle$ denotes the duality between $\mathcal{C}^{-(1+\alpha),N}$ and $\mathcal{C}^{1+\alpha,N}$ in the first case, between $\mathcal{C}^{-(1+\alpha)}$ and $\mathcal{C}^{1+\alpha}$ in the second case and between $W^{-1,\infty}$ and $W^{1,\infty}$ in the last case.
	\end{defn}
	
	We note that the definition is well-posed. Actually, $\phi(s,\cdot)$ is in $\mathcal{C}^{1+\alpha}$ $\forall s$ thanks to Lemma \ref{sonobravo}, so $ \langle\mu_0,\phi(0,\cdot)\rangle$ and $\langle f(s),\phi(s,\cdot)\rangle$ are well defined. Moreover, we have
	$$
	\norm{\phi(s,\cdot)}_{W^{1,\infty}}\le C\,.
	$$
	Hence, since $f\in L^1(W^{-1,\infty})$, the last integral is well defined too.
	
	\begin{rem} We are mainly interested in a particular case of distribution $f$. If there exists an integrable function $c:[0,T]\times\Omega\to\R^n$ such that $\forall\phi\in W^{1,\infty}$
		$$
		\langle f(t),\phi\rangle=\into c(t,x)\cdot D\phi(x)\,dx\,,
		$$
		then we can write the problem \eqref{linfp} in this way:
		\begin{equation*}
			\begin{cases}
				\mu_t-\mathrm{div}(a(x)D\mu)-\mathrm{div}(\mu b)=\mathrm{div}(c)\,,\\
				\mu(0)=\mu_0\,,\\
				\left(a(x)D\mu+\mu b+c\right)\cdot\nu_{|\partial\Omega}=0\,,
			\end{cases}
		\end{equation*}
		writing $f$ like a divergence and adjusting the Neumann condition, in order to make sense out of the integration by parts in the regular case. 
		
		In this case, in order to ensure the condition $f\in L^1(W^{-1,\infty})$, we can simply require $c\in L^1(Q_T)$. Actually we have, using Jensen's inequality,
		\begin{align*}
			\norm{f}\amf=\int_0^T\sup\limits_{\norm{\phi}_{W^{1,\infty}}\le 1}\left(\into c(t,x)\cdot D\phi(x)\,dx\right)dt\le C\intif|c(t,x)|\,dxdt=\norm{c}_{L^1}\,,
		\end{align*}
		where $|\cdot|$ is any equivalent norm in $\R^d$.
	\end{rem}
	\vspace{0.42cm}
	
	The next Proposition gives us an exhaustive existence and uniqueness result for \eqref{linfp}.
	\begin{prop}\label{peggiodellagerma}
		Let $f\in L^1(W^{-1,\infty})$, $\mu_0\in\mathcal{C}^{-(1+\alpha)}$, $b\in L^\infty$. Then there exists a unique solution of the Fokker-Planck equation \eqref{linfp}.
		
		This solution satisfies
		\begin{equation}\label{stimefokker}
			\sup_t\norm{\mu(t)}\amc+\norm{\mu}_{L^p}\le C\left(\norm{\mu_0}_{-(1+\alpha)}+\norm{f}_{L^1(W^{-1,\infty})}\right)\,,
		\end{equation}
		where $p=\frac{d+2}{d+1+\alpha}\,$.

		Finally, the solution is stable: if $\mu^n_0\to\mu_0$ in $\mathcal{C}^{-(1+\alpha)}$, $\{b^n\}_n$ uniformly bounded and $b^n\to b$ in $L^p$ $\forall\,p$, $f^n\to f$ in $L^1(W^{-1,\infty})$, then, calling $\mu^n$ and $\mu$ the solutions related, respectively, to $(\mu^n_0,b^n,f^n)$ and $(\mu_0,b,f)$, we have $\mu^n\to\mu$ in $\mathcal{C}([0,T];\mathcal{C}^{-(1+\alpha),N})\cap L^p(Q_T)$.
		\begin{proof}
			
			For the existence part, we start assuming that $f$, $b$, $\mu_0$ are smooth functions, and that $\mu_0$ satisfies
			\begin{equation}\label{neumannmu}
				\left(a(x)D\mu_0+\mu_0 b\right)\cdot\nu_{|\partial\Omega}=0\,.
			\end{equation}
			In this case, we can split the divergence terms in \eqref{linfp} and obtain that $\mu$ is a solution of a linear equation with smooth coefficients. So the existence of solutions in this case is a straightforward consequence of the classical results in \cite{lsu}, \cite{lunardi}.
			
			We consider the unique solution $\phi$ of \eqref{hjbfp} with $\psi=0$ and $\xi\in\mathcal{C}^{1+\alpha,N}$. Multiplying the equation of $\mu$ for $\phi$ and integrating by parts in $[0,t]\times\Omega$ we obtain
			\begin{equation}\label{rhs}
				\langle \mu(t),\xi\rangle=\langle\mu_0,\phi(0,\cdot)\rangle+\int_0^t\langle f(s),\phi(s,\cdot) \rangle\,ds\,.
			\end{equation}
			
			Thanks to Lemma \ref{sonobravo}, we know that
			\begin{equation}\label{upa}
				\norm{\phi}\amu\le C\norm{\xi}_{1+\alpha}\,.
			\end{equation}
			
			Then the right hand side term of \eqref{rhs} is bounded in this way:
			\begin{equation*}
				\langle\mu_0,\phi(0,\cdot)\rangle+\int_0^t\langle f(s),\phi(s,\cdot) \rangle\,ds\le C\norm{\xi}_{1+\alpha}\left(\norm{\mu_0}_{-(1+\alpha)}+
				\int_0^t\norm{f(s)}_{W^{1,\infty}}\right)\,.
			\end{equation*}
			
			Coming back to \eqref{rhs} and passing to the $sup$ when $\xi\in\mathcal{C}^{1+\alpha,N}$, $\norm{\xi}_{1+\alpha}\le 1$, we obtain
			\begin{equation}
				\sup\limits_t\norm{\mu(t)}\amc\le C\left(\norm{\mu_0}_{-(1+\alpha)}+\norm{f}\amf\right)\,.
			\end{equation}
			Now we have to prove the $L^p$ estimate. We consider the solution of \eqref{hjbfp} with $t=T$, $\xi=0$ and $\psi\in L^r$, with $r>d+2$ (we recall that in this chapter we call $d$ the dimension of the space.).
			
			Then the Corollary of \emph{Theorem IV.9.1} of \cite{lsu} tells us that
			\begin{equation}\label{napule}
				\norm{\phi}_{1-\frac{d+2}{2r},2-\frac{d+2}{r}}\le C\norm{\psi}_{L^r}\,.
			\end{equation}
			
			Choosing $r=\frac{d+2}{1-\alpha}$, one has $2-\frac{d+2}{r}=1+\alpha$. Integrating in $[0,T]\times\Omega$ the equation of $\mu$ one has
			$$
			\intif\mu\psi\,dxds=\langle\mu_0,\phi(0,\cdot)\rangle+\int_0^T\langle f(s),\phi(s,\cdot)\rangle\,ds\,.
			$$
			
			Thanks to \eqref{napule} we can estimate the terms on the right-hand side and obtain
			\begin{equation}\label{minecessita}
				\intif\mu\psi\,dxds\le C\norm{\psi}_{L^r}\left( \norm{\mu_0}_{-(1+\alpha)}+\norm{f}\amf \right)\,.
			\end{equation}
			
			Passing to the $sup$ for $\norm{\psi}_{L^r}\le 1$, we finally get
			$$
			\norm{\mu}_{L^p}\le C\left( \norm{\mu_0}_{-(1+\alpha)}+\norm{f}\amf \right)\,,
			$$
			
			with $p$ defined as the conjugate exponent of $r$, i.e. $p=\frac{d+2}{d+1+\alpha}$.
			
			This proves estimates \eqref{stimefokker} in the regular case.\\
			
			In the general case, we consider suitable smooth approximations $\mu_{0}^k$, $f^k$, $b^k$ converging to $\mu_0$, $f$, $b$ respectively in $\mathcal{C}^{-(1+\alpha),N}$, $L^1(W^{-1,\infty})$ and $L^q(Q_T)$ $\forall q\ge 1$, with $b_k$ bounded uniformly in $k$ and with $\mu_0^k$ satisying \eqref{neumannmu}.
			
			We call $\mu^k$ the related solution of \eqref{linfp}. The above convergences tells us that, for a certain $C$,
			\begin{align*}
				\|{\mu^k_0}\|\amb\le C\norm{\mu_0}\amb\,,\qquad&\norminf{b_k}\le C\norminf{b}\\
				&\|{f^k}\|\amf\le C\norm{f}\amf\,.
			\end{align*}
			
			Then we apply \eqref{stimefokker}, to obtain, uniformly in $k$,
			\begin{equation}\label{blaffoff}
				\sup_t\|{\mu^k(t)}\|\amc+\|{\mu^k}\|_{L^p}\le C\left(\norm{\mu_0}_{-(1+\alpha)}+\norm{f}_{L^1(W^{-1,\infty})}\right)\,,
			\end{equation}

			where $C$ actually depends on $b^k$, but since $b^k\to b$ it is bounded uniformly in $k$.
			
			Moreover, the function $\mu^{k,h}:=\mu^k-\mu^h$ also satisfies \eqref{linfp} with data $b=b^k$,\\$f=f^k-f^h+\mathrm{div}(\mu^h(b^k-b^h))$, $\mu^0=\mu^k_0-\mu^h_0$. Then estimates \eqref{stimefokker} tell us that
			\begin{align*}
				&\sup_t\|{\mu^{k,h}(t)}\|\amc\,+\,\|{\mu^{k,h}}\|_{L^p}\\\le C&\left(\|{\mu^k_0-\mu^h_0}\|_{-(1+\alpha)}+\|{f^k-f^h}\|_{L^1(W^{-1,\infty})}+\|{\mathrm{div}(\mu^h(b^k-b^h))}\|\amf\right)\,,
			\end{align*}
			
			The first two terms in the right-hand side easily go to $0$ when $h,k\to+\infty$, since $\mu^k_0$ and $f^k$ are Cauchy sequences. As regards the last term, calling $p'$ the conjugate exponent of $p$, we have
			\begin{align}\label{luigicoibluejeans}
				\|{\mathrm{div}(\mu^h(b^k-b^h))}\|\amf\le C\intif\left|\mu^h(b^k-b^h)\right|\,dxdt\le C\|{b^k-b^h}\|_{L^{p'}}\,,
			\end{align}
			since $\mu^k$ is bounded in $L^p$ by \eqref{blaffoff} (here $C$ depends also on $\mu_0$ and $f$). So, also the last term goes to $0$ since $b^k$ is a Cauchy sequence in $L^q$ $\forall q\ge1$.
			
			Hence, $\{\mu^k\}_k$ is a Cauchy sequence, and so there exists $\mu\in \mathcal{C}([0,T];\mathcal{C}^{-(1+\alpha),N})\cap L^p(Q_T)$ such that
			$$
			\mu^k\to\mu\qquad\mbox{strongly in }\mathcal{C}([0,T];\mathcal{C}^{-(1+\alpha),N})\,,\mbox{ strongly in } L^p(Q_T)\,.
			$$
			Furthermore, $\mu$ satisfies \eqref{stimefokker}.
			
			To conclude, we have to prove that $\mu$ is actually a solution of \eqref{linfp} in the sense of Definition \ref{canzonenuova}.
			
			We take $\phi$ and $\phi^k$ as the solutions of \eqref{hjbfp} related to $b$ and $b^k$. The weak formulation for $\mu^k$ implies that
			
			\begin{equation*}
				\langle \mu^k(t),\xi\rangle+\inti\mu^k(s,x)\psi(s,x)\,dxds=\langle\mu_0^k,\phi^k(0,\cdot)\rangle+\int_0^t\langle f^k(s),\phi^k(s,\cdot) \rangle\,ds\,,
			\end{equation*}
			
			We can immediately pass to the limit in the left-hand side, using the convergence of $\mu^k$ previously obtained.
			
			For the right-hand side, we first need to prove the convergence of $\phi^k$ towards $\phi$. This is immediate: actually, the function $\tilde{\phi}^k:=\phi^k-\phi$ satisfies

			\begin{equation*}
				\begin{cases}
					-\tilde{\phi}^k_t-\mathrm{div}(aD\tilde{\phi}^k)+b^kD\tilde{\phi}^k=(b^k-b)D\phi\,,\\
					\tilde{\phi}^k(t)=0\,,\\
					aD\tilde{\phi}^k\cdot\nu_{|\partial\Omega}=0\,.
				\end{cases}
			\end{equation*}
			
			Then, the Corollary of \emph{Theorem IV.9.1} of \cite{lsu} implies, for a certain $q>d+2$ and depending on $\alpha$,
			$$
			\|{\tilde{\phi}^k}\|\amu\le C\|{(b^k-b)D\phi}\|_{L^q}\to0\,,
			$$
			since $D\phi$ is bounded in $L^\infty$ using Lemma \ref{sonobravo}.
			
			Hence, $\phi^k\to\phi$ in $\mathcal{C}^{\frac{1+\alpha} 2,1+\alpha}$. This allows us to pass to the limit in the right-hand side too and prove that \eqref{weakmu} holds true, and so that $\mu$ is a weak solution of \eqref{linfp}. This concludes the existence part.\\
			
			For the uniqueness part, we consider $\mu_1$ and $\mu_2$ two weak solutions of the system. Then, by linearity, the function $\mu:=\mu_1-\mu_2$ is a weak solution of
			$$
			\begin{cases}
				\mu_t-\mathrm{div}(a(x)D\mu)-\mathrm{div}(\mu b)=0\,,\\
				\mu(0)=0\,,\\
				\left(a(x)D\mu+\mu b\right)\cdot\nu_{|\partial\Omega}=0\,.
			\end{cases}
			$$
			Hence, the weak estimation \eqref{weakmu} implies, $\forall \psi\in L^\infty$ and $\forall\xi\in\mathcal{C}^{1+\alpha,N}$,
			$$
			\langle \mu(t),\xi\rangle+\inti\mu(s,x)\psi(s,x)\,dxds=0\,,
			$$
			which implies $$\norm{\mu}_{L^1}=\supo\norm{\mu(t)}_{-(1+\alpha),N}=0$$ and concludes the uniquess part.\\
			
			Finally, the stability part is an easy consequence of the estimates obtained previously. Let $f^n\to f$, $\mu^n_0\to\mu_0$ and $b^n\to b$. Then the function $\tilde{\mu}^n:=\mu^n-\mu$ satisfies \eqref{linfp} with $b\,,\mu_0$ and $f$ replaced by $b^n$, $\mu^n_0-\mu_0$, $f^n-f+\mathrm{div}(\mu(b^n-b))$. Then we use \eqref{stimefokker} to obtain
			\begin{align*}
				&\sup_t\|{\tilde{\mu}^n}\|\amc\,+\,\|{\tilde{\mu}^n}\|_{L^p}\\\le C&\left(\|{\mu^n_0-\mu_0}\|_{-(1+\alpha)}+\|{f^n-f}\|_{L^1(W^{-1,\infty})}+\|{\mathrm{div}(\mu(b^n-b))}\|\amf\right)\,,
			\end{align*}
			The first two terms in the right-hand side go to $0$. For the last term, the same computations of \eqref{luigicoibluejeans} imply
			$$
			\|{\mathrm{div}(\mu(b^n-b))}\|\amf\le C\|{b^n-b}\|_{L^{p'}}\to0\,.
			$$
			Then $\mu^n\to\mu$ in $\mathcal{C}([0,T];\mathcal{C}^{-(1+\alpha),N})\cap L^p(Q_T)$, which concludes the Proposition.
		\end{proof}
	\end{prop}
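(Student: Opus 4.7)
The plan is to construct the solution by duality with the adjoint parabolic problem \eqref{hjbfp}, establishing the two estimates first in a smooth regime and then passing to the limit via a Cauchy argument.

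First I would assume that $f$, $b$ and $\mu_0$ are smooth, with $\mu_0$ satisfying the compatibility condition \eqref{neumannmu}, so that after splitting the divergence terms \eqref{linfp} becomes a uniformly parabolic equation with smooth coefficients and classical existence and uniqueness follow from the standard theory of \cite{lsu,lunardi}. To derive the $\mathcal{C}^{-(1+\alpha),N}$ bound, I fix $\xi \in \mathcal{C}^{1+\alpha,N}$ with $\|\xi\|_{1+\alpha} \le 1$, take $\phi$ to be the solution of \eqref{hjbfp} on $[0,t]$ with $\psi = 0$ and terminal datum $\xi$, multiply the equation for $\mu$ by $\phi$ and integrate by parts. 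The two Neumann conditions make all boundary integrals vanish, and the identity \eqref{rhs} appears; Lemma \ref{sonobravo} gives $\|\phi\|_{\frac{1+\alpha}{2},1+\alpha} \le C\|\xi\|_{1+\alpha}$, which is exactly what is needed to bound $\langle \mu_0, \phi(0,\cdot)\rangle$ in $\mathcal{C}^{-(1+\alpha)}$ and the time integral against $f \in L^1(W^{-1,\infty})$. Taking the supremum over $\xi$ produces the $\sup_t \|\mu(t)\|_{-(1+\alpha),N}$ estimate.

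For the $L^p$ bound with $p = (d+2)/(d+1+\alpha)$, I would instead use the same identity integrated on $[0,T]$, now choosing $\phi$ with $\xi = 0$ and arbitrary $\psi \in L^r$, where $r = (d+2)/(1-\alpha)$ is the conjugate exponent of $p$. The corollary of \textit{Theorem IV.9.1} of \cite{lsu} yields $\|\phi\|_{1-\frac{d+2}{2r},1+\alpha} \le C\|\psi\|_{L^r}$, and the same right-hand side terms are again controlled. Taking the supremum over $\|\psi\|_{L^r} \le 1$ gives the $L^p$ estimate by duality.

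To remove the smoothness assumption I would approximate $(\mu_0, b, f)$ by smooth $(\mu_0^k, b^k, f^k)$ with $b^k$ uniformly bounded and $\mu_0^k$ satisfying \eqref{neumannmu}. The key observation is that $\mu^{k,h} := \mu^k - \mu^h$ itself solves the Fokker--Planck problem with drift $b^k$, initial datum $\mu_0^k - \mu_0^h$ and source $f^k - f^h + \mathrm{div}(\mu^h(b^k - b^h))$, so applying the estimates already proved reduces the Cauchy property to checking that the extra divergence source tends to zero in $L^1(W^{-1,\infty})$. Writing the duality pairing of a divergence as an integral and using H\"older's inequality, this reduces to $\|\mu^h\|_{L^p}\|b^k-b^h\|_{L^{p'}}$, which converges to zero thanks to the uniform $L^p$ bound already obtained and the convergence of $b^k$. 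This is the only subtle step: the entire point of proving the $L^p$ Lebesgue bound is to be able to absorb the distributional source coming from mollifying the rough drift, and without it the approximation procedure would not close.

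Uniqueness follows directly from linearity: if $\mu$ is a weak solution of the homogeneous problem, the defining identity \eqref{weakmu} with $\xi = 0$ and $\psi \in L^\infty$ arbitrary gives $\int_0^t\int_\Omega \mu\psi = 0$, hence $\mu \equiv 0$. Finally, stability is proved by exactly the same argument applied to $\mu^n - \mu$: it solves \eqref{linfp} with data $(\mu_0^n - \mu_0, f^n - f + \mathrm{div}(\mu(b^n - b)), b^n)$, and the additional divergence term is controlled by $\|b^n - b\|_{L^{p'}}\to 0$ through the $L^p$ estimate on $\mu$, giving the claimed convergence in $\mathcal{C}([0,T]; \mathcal{C}^{-(1+\alpha),N}) \cap L^p(Q_T)$.
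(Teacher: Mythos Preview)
Your proposal is correct and follows essentially the same approach as the paper: duality with the adjoint problem \eqref{hjbfp}, using Lemma \ref{sonobravo} for the $\mathcal{C}^{-(1+\alpha),N}$ bound and the corollary of \emph{Theorem IV.9.1} of \cite{lsu} with $r=(d+2)/(1-\alpha)$ for the $L^p$ bound, then the Cauchy argument on $\mu^k-\mu^h$ with the divergence source controlled via $\|\mu^h\|_{L^p}\|b^k-b^h\|_{L^{p'}}$. The only step you leave implicit, and which the paper spells out, is checking that the limit $\mu$ actually satisfies the weak formulation \eqref{weakmu}: this requires showing that the test functions $\phi^k$ (solutions of \eqref{hjbfp} with drift $b^k$) converge to $\phi$ in $\mathcal{C}^{\frac{1+\alpha}{2},1+\alpha}$, which the paper does by writing the equation for $\phi^k-\phi$ and applying the same $L^r$ parabolic estimate.
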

	
	The last proposition allows us to get another regularity result of $\mu$, when the data $b$ is more regular. This result will be essential in order to improve the regularity of $\dm{U}$ with respect to $y$.
	
	\begin{cor}
		Let $\mu_0\in\mathcal{C}^{-(1+\alpha)}$, $f\in L^1(W^{-1,\infty})$, $b\in\mathcal{C}^{\frac\alpha 2,\alpha}$. Then the unique solution $\mu$ of \eqref{linfp} satisfies
		\begin{equation}\label{forsemisalvo}
			\supo\norm{\mu(t)}_{-(2+\alpha),N}\le C\left(\norm{\mu_0}_{-(2+\alpha)}+\norm{f}_{L^1(W^{-1,\infty})}\right)\,.
		\end{equation}
		\begin{proof}
			We take $\phi$ as the solution of \eqref{hjbfp}, with $\xi\in C^{2+\alpha,N}(\Omega)$ and $\psi=0$. Then we know from the classical results of \cite{lsu}, \cite{lunardi} (it is important here that $b\in\mathcal{C}^{\frac\alpha2,\alpha}$), that
			$$
			\norm{\phi}\amd\le C\norm{\xi}_{2+\alpha}\,.
			$$
			The weak formulation of $\mu$ \eqref{weakmu} tells us that
			$$
			\langle\mu(t),\xi\rangle=\langle\mu_0,\phi(0,\cdot)\rangle+\int_0^T\langle f(s),\phi(s,\cdot)\rangle\,ds\le C\left(\norm{\mu_0}_{-(2+\alpha)}+\norm{f}_{L^1(W^{-1,\infty})}\right)\norm{\xi}_{2+\alpha}\,.
			$$
			Hence, we can pass to the $sup$ for $\xi\in\mathcal{C}^{2+\alpha,N}$ with $\norm{\xi}_{2+\alpha}\le 1$ and obtain \eqref{forsemisalvo}.
		\end{proof}
		
		\begin{rem}
			We stress the fact that \emph{we shall not formulate problem \eqref{linfp} directly with $\mu_0\in\mathcal{C}^{-(2+\alpha)}$.} Actually, the core of the existence theorem is the $L^p$ bound in space-time of $\mu$, and this is obtained by duality, considering test functions $\phi$ with data $\psi\in L^r$. For this function it is not guaranteed that $\phi(0,\cdot)\in\mathcal{C}^{2+\alpha}(\Omega)$, and an estimation like \eqref{minecessita} is no longer possible.
		\end{rem}
		\vspace{0.1cm}
	\end{cor}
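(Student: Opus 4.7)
The natural strategy is a duality argument based on Definition~\ref{canzonenuova}, exactly as in the $-(1+\alpha)$ estimate of Proposition~\ref{peggiodellagerma}, but now pairing $\mu(t)$ against test functions from the smaller space $\mathcal{C}^{2+\alpha,N}$. The whole point of the hypothesis $b\in\mathcal{C}^{\frac{\alpha}{2},\alpha}$ is that it upgrades the Schauder theory for the adjoint equation \eqref{hjbfp} from a $\mathcal{C}\amu$ bound (which is all Lemma~\ref{sonobravo} needed) to a full $\mathcal{C}\amd$ bound.

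Concretely, I would fix $\xi\in\mathcal{C}^{2+\alpha,N}$ with $\norm{\xi}_{2+\alpha}\le1$ and solve the backward Neumann problem \eqref{hjbfp} on $[0,t]\times\Omega$ with $\psi\equiv0$ and terminal data $\phi(t)=\xi$. Since $b$ is Hölder in space-time and $\xi$ satisfies the compatibility condition $a(x)D\xi\cdot\nu_{|\partial\Omega}=0$ built into the definition of $\mathcal{C}^{2+\alpha,N}$, the classical parabolic Schauder theory (Theorem~IV.5.3 of \cite{lsu}, or the Neumann version from \cite{lunardi}) yields a unique solution with
\[
\norm{\phi}\amd\le C\norm{\xi}_{2+\alpha}.
\]

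Then I plug this $\phi$ into the weak formulation \eqref{weakmu}, which with $\psi\equiv0$ reduces to
\[
\langle\mu(t),\xi\rangle=\langle\mu_0,\phi(0,\cdot)\rangle+\int_0^t\langle f(s),\phi(s,\cdot)\rangle\,ds.
\]
The first term on the right is bounded by $\norm{\mu_0}_{-(2+\alpha)}\norm{\phi(0,\cdot)}_{2+\alpha}\le C\norm{\mu_0}_{-(2+\alpha)}\norm{\xi}_{2+\alpha}$; the second by $\norm{f}\amf\sup_s\norm{\phi(s,\cdot)}_{W^{1,\infty}}\le C\norm{f}\amf\norm{\xi}_{2+\alpha}$, using the Schauder bound in both cases. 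Taking the supremum over the unit ball of $\mathcal{C}^{2+\alpha,N}$ and then over $t\in[0,T]$ gives exactly \eqref{forsemisalvo}.

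The only delicate point I expect is that this bound \emph{cannot} be obtained for $\mu_0\in\mathcal{C}^{-(2+\alpha)}$ as an input hypothesis instead of as an a posteriori estimate: the existence proof in Proposition~\ref{peggiodellagerma} crucially uses the $L^p$ duality step with test functions $\phi$ coming from $\psi\in L^r$, and for such $\phi$ one only controls $\phi(0,\cdot)$ up to $\mathcal{C}^{1+\alpha}$, not $\mathcal{C}^{2+\alpha}$. So the corollary is genuinely a refinement under the extra regularity of $b$ rather than a strengthening of the existence framework, and the remark following the statement makes precisely this point; the proof itself, as outlined above, is a clean one-shot duality computation once the upgraded Schauder estimate for $\phi$ is in hand.
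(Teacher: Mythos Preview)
Your proposal is correct and follows essentially the same approach as the paper: both solve the adjoint equation \eqref{hjbfp} with $\psi=0$ and terminal datum $\xi\in\mathcal{C}^{2+\alpha,N}$, invoke the $\mathcal{C}\amd$ Schauder estimate (available precisely because $b\in\mathcal{C}^{\frac{\alpha}{2},\alpha}$), and then read off the bound from the weak formulation \eqref{weakmu}. Your closing remark on why the hypothesis $\mu_0\in\mathcal{C}^{-(1+\alpha)}$ cannot be weakened to $\mu_0\in\mathcal{C}^{-(2+\alpha)}$ also matches the paper's own remark exactly.
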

	
	We can also obtain some useful estimates for the density function $m$, as stated in the next result.
	
	\begin{cor}\label{samestrategies}
		Let $(u,m)$ be the solution of the MFG system defined in \eqref{meanfieldgames}-\eqref{fame}. Then we have $m\in L^p(Q_T)$ for $p=\frac{d+2}{d+1+\alpha}$, with
		\begin{equation}\label{mlp}
			\norm{m}_{L^p}\le C\norm{m_0}_{-(1+\alpha)}\,.
		\end{equation}
		Furthermore, if $(u_1,m_1)$ and $(u_2,m_2)$ are two solutions of \eqref{meanfieldgames}-\eqref{fame} with initial conditions $m_{01}$ and $m_{02}$, then we have
		\begin{equation}\label{m12p}
			\norm{m_1-m_2}_{L^p(Q_T)}\le C\dw(m_{01},m_{02})\,.
		\end{equation}
		\begin{proof}
			Since $m$ satisfies \eqref{linfp} with $\mu=m_0\in\mathcal{P}(\Omega)\subset\mathcal{C}^{-(1+\alpha)}$, $b=H_p(x,Du)+\tilde{b}\in L^\infty$ and $f=0$, inequality \eqref{mlp} comes from Proposition \ref{peggiodellagerma}.
			
			For the second inequality, we consider $m:=m_1-m_2$. Then $m$ solves the equation
			\begin{equation*}
				\begin{cases}
					m_t-\mathrm{div}(aDm)-\mathrm{div}(m(H_p(x,Du_1)+\tilde{b}))=\mathrm{div}(m_2(H_p(x,Du_2)-H_p(x,Du_1)))\,,\\
					m(t_0)=m_{01}-m_{02}\,,\\
					\left[aDm+m\tilde{b}+m_1H_p(x,Du_1)-m_2H_p(x,Du_2)\right]\cdot\nu_{|\partial\Omega}=0\,,
				\end{cases}
			\end{equation*}
			i.e. $m$ is a solution of \eqref{linfp} with $f=\mathrm{div}(m_2(H_p(x,Du_2)-H_p(x,Du_1)))$, $\mu_0=m_{01}-m_{02}$, $b=H_p(x,Du_1)$. Then estimations \eqref{stimefokker} imply
			$$
			\norm{m_1-m_2}_{L^p(Q_T)}\le C\left(\norm{\mu_0}\amb+\norm{f}\amf\right)\,.
			$$
			We estimate the right-hand side term. As regards $\mu_0$ we have
			$$
			\norm{\mu_0}_{-(1+\alpha)}=\sup\limits_{\norm{\phi}_{1+\alpha}\le 1}\into \phi(x)(m_{01}-m_{02})(dx)\le C\dw(m_{01},m_{02})\,.
			$$
			For the $f$ term we argue in the following way:
			\begin{align*}
				\norm{f}\amf&=\int_0^T\sup\limits_{\norm{\phi}_{W^{1,\infty}}\le 1}\left(\into H_p(x,Du_2)-H_p(x,Du_1)D\phi\,m_2(t,dx)\right)\,dt\\&\le C\norm{u_1-u_2}\amu\le C\dw(m_{01},m_{02})\,,
			\end{align*}
			which allows us to conclude.
		\end{proof}
	\end{cor}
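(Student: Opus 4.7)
The plan is to reduce both inequalities to direct applications of Proposition \ref{peggiodellagerma}, which already provides the $L^p$ estimate we need with exactly the same exponent $p=\frac{d+2}{d+1+\alpha}$.

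For the first inequality, the density $m$ of the MFG system satisfies \eqref{linfp} with initial datum $m_0$, source $f=0$, and drift $b=H_p(x,Du)+\tilde{b}$. The drift lies in $L^\infty$: $\tilde{b}$ is smooth by hypothesis \ref{ipotesi}(i), and $Du$ is globally bounded thanks to the $\mathcal{C}^{1+\frac{\alpha}{2},2+\alpha}$ estimate \eqref{first} of the previous proposition. Moreover, any probability measure on $\Omega$ embeds into $\mathcal{C}^{-(1+\alpha)}$ with norm uniformly controlled (test against $\phi$ with $\norm{\phi}_{1+\alpha}\le 1$). Estimate \eqref{stimefokker} then gives \eqref{mlp} directly.

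For the second inequality, I would subtract the two MFG systems and check that $m:=m_1-m_2$ solves exactly the Fokker-Planck equation written in the statement, namely \eqref{linfp} with $b=H_p(x,Du_1)+\tilde{b}$, initial datum $\mu_0=m_{01}-m_{02}$, and source written in divergence form $f=\mathrm{div}(c)$ where $c:=m_2\bigl(H_p(x,Du_2)-H_p(x,Du_1)\bigr)$. Both $b$ and the Neumann boundary condition are admissible, and the remark following Definition \ref{canzonenuova} gives $\norm{f}\amf\le C\norm{c}_{L^1}$. Applying \eqref{stimefokker} yields
\begin{equation*}
\norm{m_1-m_2}_{L^p}\le C\bigl(\norm{m_{01}-m_{02}}_{-(1+\alpha)}+\norm{c}_{L^1(Q_T)}\bigr).
\end{equation*}

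It remains to bound the two right-hand side terms by $\dw(m_{01},m_{02})$. The first is immediate: test functions with $\norm{\phi}_{1+\alpha}\le 1$ are in particular $1$-Lipschitz (up to a uniform constant), so $\norm{m_{01}-m_{02}}_{-(1+\alpha)}\le C\dw(m_{01},m_{02})$ by the formulation \eqref{wass}. For the source term, since $H_p$ is Lipschitz in $p$ by \ref{ipotesi}(ii) and $m_2(t)$ is a probability measure,
\begin{equation*}
\norm{c}_{L^1(Q_T)}\le C\int_0^T\!\!\into|Du_1-Du_2|\,m_2(t,dx)\,dt\le C\norm{u_1-u_2}\amu,
\end{equation*}
and Proposition \ref{holder} closes the estimate by $\norm{u_1-u_2}\amu\le C\dw(m_{01},m_{02})$. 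There is no serious obstacle here; the only point requiring care is the identification of the source term as a divergence of an $L^1$ vector field, which is what allows us to stay within the $L^1(W^{-1,\infty})$ framework of Proposition \ref{peggiodellagerma} rather than needing any higher integrability of $m_2$.
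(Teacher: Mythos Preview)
Your proof is correct and follows essentially the same route as the paper: both parts are direct applications of Proposition \ref{peggiodellagerma}, with the difference $m_1-m_2$ cast as a solution of \eqref{linfp} whose divergence-form source is controlled via $\norm{c}_{L^1}$ and then by $\norm{u_1-u_2}\amu$ through Proposition \ref{holder}. The only minor slip is that the Lipschitz bound on $H_p$ in $p$ comes from hypothesis \ref{ipotesi}(iii) (boundedness of $H_{pp}$), not (ii).
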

	
	In order to prove the representation formula \eqref{reprform}, we need to obtain some estimates for a more general linearized system of the form
	\begin{equation}\label{linear}
		\begin{cases}
			-z_t-\mathrm{tr}(a(x)D^2z)+H_p(x,Du)Dz=\mathlarger{\dm{F}}(x,m(t))(\rho(t))+h(t,x)\,,\\
			\rho_t-\mathrm{div}(a(x)D\rho)-\mathrm{div}(\rho(H_p(x,Du)+\tilde{b}))-\mathrm{div}(m H_{pp}(x,Du) Dz+c)=0\,,\\
			z(T,x)=\mathlarger{\dm{G}}(x,m(T))(\rho(T))+z_T(x)\,,\qquad\rho(t_0)=\rho_0\,,\\
			a(x)Dz\cdot\nu_{|\partial\Omega}=0\,,\quad\left(a(x)D\rho+\rho(H_p(x,Du)+\tilde{b})+mH_{pp}(x,Du) Dz+c\right)\cdot\nu_{|\partial\Omega}=0\,,
		\end{cases}
	\end{equation}
	where we require
	$$
	z_T\in\mathcal{C}^{2+\alpha},\quad\rho_0\in\mathcal{C}^{-(1+\alpha)},\quad h\in \mathcal{C}^{0,\alpha}([t_0,T]\times\Omega),\quad c\in L^1([t_0,T]\times\Omega)\,.
	$$
	
	Moreover, $z_T$ satisfies
	\begin{equation}\label{neumannzT}
		aDz_T\cdot\nu_{|\partial\Omega}=0\,.
	\end{equation}
	
	A suitable definition of solution for this system is the following:
	\begin{defn}\label{defn}
		We say that a couple $(z,\rho)\in\mathcal{C}^{1,2+\alpha}\times\,\left(\mathcal{C}([0,T];\mathcal{C}^{-(1+\alpha),N}(\Omega))\cap L^1(Q_T)\right)$ is a solution of the equation \eqref{linear} if
		\begin{itemize}
			\item $z$ is a classical solution of the linear equation;
			\item $\rho$ is a distributional solution of the Fokker-Planck equation in the sense of Definition \ref{canzonenuova}.
		\end{itemize}
	\end{defn}
	
	We start with the following existence result.

	\begin{prop}\label{linearD}
		Let hypotheses \ref{ipotesi} hold for $0<\alpha<1$. Then there exists a unique solution $(z,\rho)\in\mathcal{C}^{1,2+\alpha}\times\,\left(\mathcal{C}([0,T];\mathcal{C}^{-(1+\alpha),N}(\Omega))\cap L^1(Q_T)\right)$ of system \eqref{linear}. This solution satisfies, for a certain $p>1$,
		\begin{equation}
			\begin{split}\label{stimelin}
				\norm{z}\amv+\sup\limits_t\norm{\rho(t)}_{-(1+\alpha),N}+\norm{\rho}_{L^p}\le CM\spazio,
			\end{split}
		\end{equation}
		where $C$ depends on $H$ and where $M$ is given by
		\begin{equation}\label{emme}
			M:=\norm{z_T}_{2+\alpha}+\norm{\rho_0}_{-(1+\alpha)}+\norm{h}_{0,\alpha}+\norm{c}_{L^1}\,.
		\end{equation}
		\begin{proof}
			As always, we can assume $t_0=0$ without loss of generality.
			
			The main idea is to apply Schaefer's Theorem.\\
			
			\emph{Step 1: Definition of the map $\mathbf{\Phi}$ satisfying Schaefer's Theorem}.
			We set $X:=\mathcal{C}([0,T];\mathcal{C}^{-(1+\alpha),N})$, endowed with the norm
			\begin{equation*}
				\norm{\phi}_X:=\supo\norm{\phi(t)}_{-(1+\alpha),N}\,.
			\end{equation*}
			
			For $\rho\in X$, we consider the classical solution $z$ of the following equation
			\begin{equation}
				\label{zlin}
				\begin{cases}
					-z_t-\tr{z}+H_p(x,Du)Dz=\mathlarger{\dm{F}}(x,m(t))(\rho(t))+h(t,x)\,,\\
					z(T)=\mathlarger{\dm{G}}(x,m(T))(\rho(T))+z_T\,,\\
					a(x)Dz\cdot\nu_{|\partial\Omega}=0\,.
				\end{cases}
			\end{equation}
			
			We note that, from  Hypotheses \ref{ipotesi}, we have
			$$
			\langle a(x)D_xG(x,m), \nu(x)\rangle_{|\partial\Omega}=0\quad\forall m\in\mathcal{P}(\Omega)\implies\left\langle a(x)D_x\dm{G}(x,m(T))(\mu(T)), \nu(x)\right\rangle_{|\partial\Omega}\!\!\!\!\!\!\!=0\,.
			$$
			Hence, compatibility conditions are satisfied for equation \eqref{zlin} and, from \emph{Theorem 5.1.21} of \cite{lunardi}, $z$ satisfies
			\begin{equation}\label{stimz}
				\begin{split}
					\norm{z}\amv&\le C\left(\norm{z_T}_{2+\alpha}+\supo\norm{\rho(t)}_{-(2+\alpha),N}+\norm{h}_{0,\alpha}\right)\\&\le C\left(M+\supo\norm{\rho(t)}\amc\right)\,,
				\end{split}
			\end{equation}
			where we also use hypothesis $(vi)$ of \ref{ipotesi}, for the boundary condition of $\dm{F}$.
			
			Then we define $\mathbf{\Phi}(\rho):=\tilde{\rho}$, where $\tilde{\rho}$ is the solution in the sense of Definition \ref{canzonenuova} to:
			\begin{equation}
				\label{plin}
				\begin{cases}
					\rt_t-\mathrm{div}(a(x)D\rt)-\mathrm{div}(\rt (H_p(x,Du)+\tilde{b}))-\mathrm{div}(mH_{pp}(x,Du) Dz+c)=0\\
					\rt(0)=\rho_0\\
					\left(a(x)D\rt+\rt(H_p(x,Du)+\tilde{b})+mH_{pp}(x,Du) Dz+c\right)\cdot\nu_{|\partial\Omega}=0
				\end{cases}\spazio.
			\end{equation}
			Thanks to Proposition \ref{peggiodellagerma}, we have $\tilde{\rho}\in X$. We want to prove that the map $\mathbf{\Phi}$ is continuous and compact.\\
			For the compactness, let $\{\rho_n\}_n\subset X$ be a subsequence with $\norm{\rho_n}_X\le{C}$ for a certain $C>0$. We consider for each $n$ the solutions $z_n$ and $\tilde{\rho}_n$ of \eqref{zlin} and \eqref{plin} associated to $\tilde{\rho}_n$.\\
			Using \eqref{stimz}, we have $\norm{z_n}\amv\le C_1$, where $C_1$ depends on $C$. Then, thanks to Ascoli-Arzel\`a's Theorem, and using also \eqref{precisissimongulaeva}, $\exists z$ s.t. $z_n\to z$ up to subsequences at least in $\mathcal{C}([0,T];\mathcal{C}^1(\Omega))$.
			
			Using the pointwise convergence of $Dz_n$ and the $L^p$ boundedness of $m$ stated in \eqref{mlp}, we immediately obtain
			$$
			mH_{pp}(x,Du)Dz_n\,+\,c\to mH_{pp}(x,Du)Dz\,+\,c\qquad\mbox{in }L^1(Q_T)\,,
			$$
			which immediately implies
			$$
			\mathrm{div}(mH_{pp}(x,Du)Dz_n\,+\,c)\to\mathrm{div}{(mH_{pp}(x,Du)Dz\,+\,c)}\qquad\mbox{in }L^1(W^{-1,\infty})\,.
			$$
			Hence, stability results proved in Proposition \ref{peggiodellagerma} proves that $\tilde{\rho}_n\to\tilde{\rho}$ in $X$, where $\tilde{\rho}$ is the solution related to $Dz$. This proves the compactness result.
			
			The continuity of $\Phi$ can be proved used the same computations of the compactness.
			
			Finally, in order to apply Schaefer's theorem, we have to prove that
			$$
			\exists M>0 \mbox{ s.t. } \rho=\sigma\mathbf{\Phi}(\rho)\ \mbox{ and }\sigma\in[0,1]\implies\norm{\rho}_X\le M\spazio.
			$$
			We will prove in the next step that, if $\rho=\sigma\mathbf{\Phi}(\rho)$, then the couple $(z,\rho)$ satisfies \eqref{stimelin}. This allows us to apply Schaefer's theorem and also gives us the desired estimate \eqref{stimelin}, since each solution $(z,\rho)$ of the system satisfies $\rho=\sigma\mathbf{\Phi}(\rho)$ with $\sigma=1$.\\
			
			\emph{Step 2: Estimate of $\rho$ and $z$}. Let $(\rho,\sigma)\in X\times[0,1]$ such that $\rho=\sigma\mathbf{\Phi}(\rho)$. Then the couple $(z,\rho)$ satisfies
			\begin{equation*}
				\begin{cases}
					-z_t-\mathrm{tr}(a(x)D^2z)+H_p(x,Du)Dz=\mathlarger{\dm{F}}(x,m(t))(\rho(t))+h(t,x)\\
					\rho_t-\mathrm{div}(a(x)D\rho)-\mathrm{div}(\rho(H_p(x,Du)+\tilde{b}))-\sigma\mathrm{div}(mH_{pp}(x,Du) Dz+c)=0\\
					z(T,x)=\mathlarger{\dm{G}}(x,m(T))(\rho(T))+z_T(x)\hspace{2cm}\rho(0)=\sigma\rho_0\\
					a(x)Dz\cdot\nu_{|\partial\Omega}=0\hspace{1cm}\left(a(x)D\rho+\rho(H_p(x,Du)+\tilde{b})+\sigma(mH_{pp}(x,Du) Dz+c)\right)\cdot\nu_{|\partial\Omega}=0
				\end{cases}\spazio.
			\end{equation*}
			We want to use $z$ as test function for the equation of $\rho$. This is allowed since $z$ satisfies \eqref{hjbfp} with
			\begin{align*}
				\psi=\dm{F}(x,m(t))(\rho(t))+h(t,x)\in L^\infty(\Omega)\,,\qquad\xi=\dm{G}(x,m(T))(\rho(T))+z_T(x)\in\mathcal{C}^{1+\alpha,N}
			\end{align*}
			
			We obtain from the weak formulation of $\rho$:	
			\begin{equation*}
				\begin{split}
					&\into \left(\rho(T,x)z(T,x)-\sigma\rho_0(x)z(0,x)\right)dx=-\sigma\intif\langle c,Dz\rangle dxdt+\\
					-&\intif\rho(t,x)\left(\dm{F}(x,m(t))(\rho(t))+h\right)dxdt-\sigma\intif m\langle H_{pp}(x,Du) Dz,Dz\rangle\spazio dxdt\,.
				\end{split}
			\end{equation*}
			Using the terminal condition of $z$ and the monotonicity of $F$ and $G$, we get a first estimate:
			\begin{equation}\label{stimasigma}
				\begin{split}
					\sigma\intif m\langle H_{pp}(x,Du) Dz,Dz\rangle\spazio dxdt
					\le&\supo\norm{\rho(t)}_{-(2+\alpha),N}\norm{z_T}_{2+\alpha}+\norm{\rho}_{L^p}\norminf{h}\\
					+&\norm{z}\amv\left(\norm{\rho_0}_{-(2+\alpha),N}+\norm{c}_{L^1}\right)\\\le\,&M \left(\supo\norm{\rho(t)}_{-(1+\alpha),N}+\norm{\rho}_{L^1}+\norm{z}\amv\right)\,.
				\end{split}
			\end{equation}
			We already know an initial estimate on $z$ in \eqref{stimz}. Now we need to estimate $\rho$.
			
			Using \eqref{stimefokker} we obtain
			\begin{equation}\label{duality}
				\supo\norm{\rho}\amc+\norm{\rho}_{L^p}\le C\left(\norm{\sigma mH_{pp}(x,Du)Dz}_{L^1}+\norm{c}_{L^1}+\norm{\rho_0}\amb\right)
			\end{equation}
			
			As regards the first term in the right hand side, we can use H\"{o}lder's inequality and \eqref{stimasigma} to obtain
			\begin{align*}
				&\norm{mH_{pp}(x,Du)Dz}_{L^1}=\sigma\sup\limits_{\substack{\norminf{\phi}\le 1\\\phi\in L^\infty(Q_T;\R^d)}}\intif m\langle H_{pp}(x,Du)Dz,\phi\rangle\,dxdt\\
				&\le\sigma\left(\intif m\langle H_{pp}(x,Du) Dz,Dz\rangle\spazio dxdt\right)^\miezz\left(\intif m\langle H_{pp}(x,Du)\phi,\phi\rangle\spazio dxdt\right)^\miezz\\
				&\le
				M^\miezz\left(\supo\norm{\rho(t)}^\miezz_{-(1+\alpha),N}+\norm{\rho}^\miezz_{L^1}+\norm{z}^\miezz\amv\right)\,
			\end{align*}
			Putting these estimates into \eqref{duality} we obtain
			\begin{align*}
				\supo\norm{\rho}\amc+\norm{\rho}_{L^p}\le C\left( M+M^\miezz\left(\supo\norm{\rho(t)}^\miezz_{-(1+\alpha),N}+\norm{\rho}^\miezz_{L^1}+\norm{z}^\miezz\amv\right)\right)\,.
			\end{align*}
			Using a generalized Young's inequality with suitable coefficients, we get
			\begin{align}\label{stimarho}
				\supo\norm{\rho}\amc+\norm{\rho}_{L^p}\le C\left(M+M^\miezz\norm{z}\amv^\miezz\right)\spazio.
			\end{align}
			This gives us an initial estimate for $\rho$, depending on the estimate of $z$.
			
			Coming back to \eqref{stimz}, \eqref{stimarho} implies
			\begin{align*}
				\norm{z}\amv\le C\left(M+M^\miezz\norm{z}\amv^\miezz\right)\spazio.
			\end{align*}
			Using a generalized Young's inequality with suitable coefficients, this implies
			$$
			\norm{z}\amv\le Cm\,.
			$$
			Plugging this estimate in \eqref{stimarho}, we finally obtain
			\begin{align*}
				\norm{z}\amv+\supo\norm{\rho}\amc+\norm{\rho}_{L^p}\le CM\spazio.
			\end{align*}
			This concludes the existence result.\\\\
			
			\emph{Step 3. Uniqueness}. Let $(z_1,\rho_1)$ and $(z_2,\rho_2)$ be two solutions of \eqref{linear}. Then the couple $(z,\rho):=(z_1-z_2,\rho_1-\rho_2)$ satisfies the following linear system:
			\begin{equation*}
				\begin{cases}
					-z_t-\mathrm{tr}(a(x)D^2z)+H_p(x,Du)Dz=\mathlarger{\dm{F}}(x,m(t))(\rho(t))=0\,,\\
					\rho_t-\mathrm{div}(a(x)D\rho)-\mathrm{div}(\rho(H_p(x,Du)+\tilde{b}))-\mathrm{div}(m H_{pp}(x,Du) Dz)=0\,,\\
					z(T,x)=\mathlarger{\dm{G}}(x,m(T))(\rho(T))\,,\qquad\rho(t_0)=0\,,\\
					a(x)Dz\cdot\nu_{|\partial\Omega}=0\,,\quad\left(a(x)D\rho+\rho(H_p(x,Du)+\tilde{b})+mH_{pp}(x,Du) Dz\right)\cdot\nu_{|\partial\Omega}=0\,,
				\end{cases}
			\end{equation*}
			i.e., a system of the form \eqref{linear} with $h=c=z_T=\rho_0=0$. Then estimation \eqref{stimelin} tells us that
			$$
			\norm{z}\amv+\supo\norm{\rho}\amc+\norm{\rho}_{L^p}\le 0\spazio.
			$$
			and so $z=0$, $\rho=0$. This concludes the Proposition.
		\end{proof}
	\end{prop}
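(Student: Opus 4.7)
The plan is to prove existence by a Schaefer fixed point argument, exactly in the spirit of the proof of well-posedness of the MFG system itself, combined with a Lasry-Lions monotonicity argument to get the a priori bounds. I would work on the Banach space $X := \mathcal{C}([t_0,T];\mathcal{C}^{-(1+\alpha),N})$ and, given $\rho\in X$, first solve the backward linear Hamilton-Jacobi type equation for $z$ (with source $\frac{\delta F}{\delta m}(x,m(t))(\rho(t))+h$ and terminal datum $\frac{\delta G}{\delta m}(x,m(T))(\rho(T))+z_T$). Hypothesis $(vi)$ together with \eqref{neumannzT} guarantee that the compatibility condition $aDz(T)\cdot\nu=0$ holds, so Lunardi's \emph{Theorem 5.1.21} yields $z\in\mathcal{C}^{1,2+\alpha}$ with a bound of the form $\|z\|_{1,2+\alpha}\le C(M+\sup_t\|\rho(t)\|_{-(2+\alpha),N})$. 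Then I would solve the forward Fokker-Planck equation \eqref{plin} in the distributional sense of Definition \ref{canzonenuova} via Proposition \ref{peggiodellagerma}, obtaining $\tilde\rho\in X$, and set $\bo{\Phi}(\rho):=\tilde\rho$.

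Continuity and compactness of $\bo{\Phi}$ would follow from compiling Ascoli-Arzel\`a on the $z$-component (using the parabolic bound above plus \eqref{precisissimongulaeva}) together with the $L^p$ bound \eqref{mlp} on $m$, which ensures that $mH_{pp}(x,Du)Dz_n+c$ converges in $L^1$; then the stability statement of Proposition \ref{peggiodellagerma} propagates this convergence to $\bo{\Phi}(\rho_n)\to\bo{\Phi}(\rho)$ in $X$. The compactness part uses that $\mathcal{C}^{1,2+\alpha}$ embeds compactly in $\mathcal{C}([0,T];\mathcal{C}^1)$.

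The heart of the proof is the a priori bound needed for Schaefer: for any $(\rho,\sigma)\in X\times[0,1]$ with $\rho=\sigma\bo{\Phi}(\rho)$, one must estimate $\|z\|_{1,2+\alpha}+\sup_t\|\rho(t)\|_{-(1+\alpha),N}+\|\rho\|_{L^p}$ in terms of $M$. For this the plan is to test the $\rho$-equation against $z$ itself, which is a legitimate test function for Definition \ref{canzonenuova} since $z$ solves \eqref{hjbfp} with admissible data. The monotonicity of $F$ and $G$ (hypotheses $(iv)$-$(v)$) absorbs the sign-indefinite cross terms and leaves the quadratic form $\sigma\int\!\!\int m\langle H_{pp}(x,Du)Dz,Dz\rangle$ controlled by $M(\sup_t\|\rho(t)\|_{-(1+\alpha),N}+\|\rho\|_{L^1}+\|z\|_{1,2+\alpha})$. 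Feeding this estimate into a duality/Cauchy-Schwarz bound on the $L^1$ norm of $mH_{pp}(x,Du)Dz$, together with the dual estimate \eqref{stimefokker} applied to the $\rho$-equation, provides a first bound of the form $\sup_t\|\rho(t)\|_{-(1+\alpha),N}+\|\rho\|_{L^p}\le C(M+M^{1/2}\|z\|_{1,2+\alpha}^{1/2})$. Inserting this back into the $z$-estimate and using a generalized Young inequality closes the loop and yields \eqref{stimelin}.

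Uniqueness is then immediate: the difference of two solutions satisfies \eqref{linear} with $h=c=z_T=0$ and $\rho_0=0$, so \eqref{stimelin} forces $z=0$ and $\rho=0$. The main obstacle, as usual in this circle of ideas, is the bookkeeping in the test-function step: one has to check carefully that $z$ meets the regularity required by Definition \ref{canzonenuova} (so that $\psi=\frac{\delta F}{\delta m}(\rho)+h\in L^\infty$ and $\xi=\frac{\delta G}{\delta m}(\rho)+z_T\in\mathcal{C}^{1+\alpha,N}$ with the correct Neumann compatibility from $(vi)$), and that the subsequent application of \eqref{stimefokker} is compatible with the source $\sigma\,\mathrm{div}(mH_{pp}Dz+c)$ belonging to $L^1(W^{-1,\infty})$, which is where the $L^p$-control of $m$ from Corollary \ref{samestrategies} is crucial.
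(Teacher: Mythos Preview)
Your proposal is correct and follows essentially the same approach as the paper's proof: Schaefer's fixed point on $X=\mathcal{C}([t_0,T];\mathcal{C}^{-(1+\alpha),N})$, with the a priori bound obtained by testing the $\rho$-equation against $z$, exploiting the monotonicity of $F$ and $G$, and closing via Cauchy--Schwarz on $\sigma mH_{pp}Dz$, \eqref{stimefokker}, and Young's inequality. Even the technical checkpoints you flag (compatibility conditions from $(vi)$ for Lunardi's theorem, admissibility of $z$ as a test function in Definition~\ref{canzonenuova}, and the $L^p$ bound on $m$ from Corollary~\ref{samestrategies}) coincide with those in the paper.
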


	We are ready to prove that \eqref{linDuDm} has a fundamental solution. This solution will be the desired derivative $\dm{U}$.
	
	\begin{prop}
		Equation \eqref{linDuDm} has a fundamental solution, i.e. there exists a function $K:[0,T]\times\Omega\times\mathcal{P}(\Omega)\times\Omega\to\R$ such that, for any $(t_0,m_0,\mu_0)$ we have
		\begin{equation}\label{repres}
			v(t_0,x)=_{-(1+\alpha)}\!\!\langle \mu_0,K(t_0,x,m_0,\cdot)\rangle_{1+\alpha}
		\end{equation}
		Moreover, $K(t_0,\cdot,m_0,\cdot)\in\mathcal{C}^{2+\alpha}(\Omega)\times \mathcal{C}^{1+\alpha}(\Omega)$ with
		\begin{equation}\label{kappa}
			\sup\limits_{(t,m)\in[0,T]\times\mathcal{P}(\Omega)}\norm{K(t,\cdot,m,\cdot)}_{2+\alpha,1+\alpha}\le C\,,
		\end{equation}
		and the second derivatives w.r.t. $x$ and the first derivatives w.r.t. $y$ are continuous in all variables.
		\begin{proof}
			From now on, we indicate with $v(t,x;\mu_0)$ the solution of the first equation of \eqref{linDuDm} related to $\mu_0$.
			We start considering, for  $y\in\Omega$, $\mu_0=\delta_y$, the Dirac function at $y$. We define
			$$
			K(t_0,x,m_0,y)=v(t_0,x;\delta_y)
			$$
			Thanks to \eqref{stimelin}, one immediately knows that $K$ is twice differentiable w.r.t. $x$ and
			$$
			\norm{K(t_0,\cdot,m_0,y)}_{2+\alpha}\le C\norm{\delta_y}_{-(1+\alpha)}=C
			$$
			Moreover, we can use the linearity of the system \eqref{linear} to obtain
			$$
			\frac{K(t_0,x,m_0,y+he_j)-K(t_0,x,m_0,y)}h=v(t_0,x;\Delta_{h,j}\delta_{y})\,,
			$$
			where  $\Delta_{h,j}\delta_{y}=\frac1h(\delta_{y+he_j}-\delta_y)$. Using stability results for \eqref{linDuDm}, proved previously, we can pass to the limit and find that
			$$
			\frac{\partial K}{\partial y_j}(t_0,x,m_0,y)=v(t_0,x;-\partial_{y_j}\delta_y)\,,
			$$
			where the derivative of the Dirac delta function is in the sense of distribution.
			Since $\partial_{y_i}\delta_y$ is bounded in $\mathcal{C}^{-(1+\alpha)}$ for all $i,j$, from \eqref{stimelin} we deduce that the second derivatives of $K$ with respect to $x$ are well defined and bounded.
			
			The representation formula \eqref{repres} is an immediate consequence of the linear character of the equation and of the density of the set generated by the Dirac functions. This concludes the proof.
		\end{proof}
	\end{prop}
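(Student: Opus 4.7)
The plan is to construct $K$ explicitly by exploiting the linearity of system \eqref{linDuDm} with respect to the initial datum $\mu_0$, and then to upgrade its regularity in the $y$-variable through difference quotients combined with the stability theory already established in Proposition \ref{linearD}.

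First I would define, for each $y\in\Omega$, the function
$$
K(t_0,x,m_0,y):=v(t_0,x;\delta_y),
$$
where $v(\cdot;\mu_0)$ denotes the first component of the unique solution of \eqref{linDuDm} with initial datum $\mu_0$. Since for any $\phi\in\mathcal{C}^{1+\alpha}$ with $\|\phi\|_{1+\alpha}\le 1$ one has $|\langle\delta_y,\phi\rangle|=|\phi(y)|\le \|\phi\|_\infty\le 1$, the Dirac masses are uniformly bounded in $\mathcal{C}^{-(1+\alpha)}$. Estimate \eqref{stimelin} applied with $h=c=z_T=0$ and $\rho_0=\delta_y$ then yields
$$
\sup_{y\in\Omega}\,\|K(t_0,\cdot,m_0,y)\|_{2+\alpha}\le C,
$$
which already gives part of \eqref{kappa}.

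Next I would prove regularity in $y$ by a difference-quotient argument. Because the system is linear in $\mu_0$,
$$
\frac{K(t_0,x,m_0,y+he_j)-K(t_0,x,m_0,y)}{h}=v\!\left(t_0,x;\tfrac{1}{h}(\delta_{y+he_j}-\delta_y)\right).
$$
The key point is that the family $\tfrac{1}{h}(\delta_{y+he_j}-\delta_y)$ is uniformly bounded in $\mathcal{C}^{-(1+\alpha)}$: for any test $\phi$ with $\|\phi\|_{1+\alpha}\le 1$, the bound $|\phi(y+he_j)-\phi(y)|/h\le \mathrm{Lip}(\phi)\le 1$ is uniform in $h$ and $y$. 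Moreover this family converges, in the sense of distributions, to $-\partial_{y_j}\delta_y$, which also belongs to $\mathcal{C}^{-(1+\alpha)}$ with uniform norm. Invoking the stability statement in Proposition \ref{linearD}, the solutions converge in the appropriate topology, and one obtains
$$
\frac{\partial K}{\partial y_j}(t_0,x,m_0,y)=v(t_0,x;-\partial_{y_j}\delta_y),
$$
which by \eqref{stimelin} is bounded in $\mathcal{C}^{2+\alpha}$ with respect to $x$, uniformly in $y$, giving the full estimate \eqref{kappa}. The continuity of $D_x^2 K$ and $D_y K$ in all variables $(t_0,x,m_0,y)$ follows from the stability of \eqref{linDuDm} with respect to simultaneous perturbations of $m_0$ (through Proposition \ref{holder}) and of $y$ (through continuity of $y\mapsto \delta_y$ and $y\mapsto \partial_{y_j}\delta_y$ in $\mathcal{C}^{-(1+\alpha)}$).

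The representation formula \eqref{repres} is the final step: by linearity of \eqref{linDuDm} in $\mu_0$, the identity $v(t_0,x;\mu_0)=\langle\mu_0,K(t_0,x,m_0,\cdot)\rangle$ holds whenever $\mu_0$ is a finite linear combination of Dirac masses; density of such combinations in $\mathcal{C}^{-(1+\alpha)}$ (in its weak-$*$ topology against $\mathcal{C}^{1+\alpha}$) together with the already proven regularity $K(t_0,x,m_0,\cdot)\in\mathcal{C}^{1+\alpha}$ allows one to extend the formula to arbitrary $\mu_0\in\mathcal{C}^{-(1+\alpha)}$. The main subtlety I anticipate is controlling the difference quotients at the boundary $\partial\Omega$: when $y$ is near $\partial\Omega$, one must check that $\partial_{y_j}\delta_y$ can still be tested against elements of $\mathcal{C}^{1+\alpha,N}$ without loss, so that the stability result of Proposition \ref{linearD} (which is formulated in $\mathcal{C}^{-(1+\alpha),N}$) actually applies. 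This is where the care taken in defining the dual spaces $\mathcal{C}^{-(1+\alpha)}$ versus $\mathcal{C}^{-(1+\alpha),N}$ in Section~2 becomes essential.
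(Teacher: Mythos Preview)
Your proposal is correct and follows essentially the same route as the paper: define $K(t_0,x,m_0,y)=v(t_0,x;\delta_y)$, use \eqref{stimelin} for the $x$-regularity, exploit linearity to write difference quotients as $v(t_0,x;\tfrac{1}{h}(\delta_{y+he_j}-\delta_y))$, pass to the limit via stability to identify $\partial_{y_j}K$ with $v(t_0,x;-\partial_{y_j}\delta_y)$, and conclude \eqref{repres} by density of Dirac combinations. Your write-up is in fact slightly more detailed than the paper's on the uniform $\mathcal{C}^{-(1+\alpha)}$ bounds for the difference quotients and on the continuity in all variables; the boundary subtlety you flag concerning $\mathcal{C}^{-(1+\alpha)}$ versus $\mathcal{C}^{-(1+\alpha),N}$ is real but is not addressed in the paper's own proof either.
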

	
	Now we are ready to prove the differentiability of the function $U$ with respect to the measure $m$.
	
	In particular, we want to prove that this fundamental solution $K$ is actually the derivative of $U$ with respect to the measure.
	\begin{thm}
		Let $(u_1,m_1)$ and $(u_2,m_2)$ be two solutions of the Mean Field Games system \eqref{meanfieldgames}-\eqref{fame}, associated with the starting initial conditions $(t_0,m_0^1)$ and $(t_0,m_0^2)$.
		Let $(v,\mu)$ be the solution of the linearized system \eqref{linDuDm} related to $(u_2,m_2)$, with initial condition $(t_0,m_0^1-m_0^2)$. Then we have
		\begin{equation}\label{boundmder}
			\norm{u_1-u_2-v}\amv+\supo\norm{m_1(t)-m_2(t)-\mu(t)}\amc\le C\dw(m_0^1,m_0^2)^{2}\,,
		\end{equation}
		
		Consequently, the function $U$ defined in \eqref{U} is differentiable with respect to $m$.
		
		\begin{proof}
			
			We call $(z,\rho)=(u_1-u_2-v,m_1-m_2-\mu)$. Then $(z,\rho)$ satisfies
			\begin{equation*}
				\begin{cases}
					-z_t-\mathrm{tr}(a(x)D^2z)+H_p(x,Du_2)Dz=\mathlarger{\dm{F}}(x,m_2(t))(\rho(t))+h(t,x)\,,\\
					\rho_t-\mathrm{div}(a(x)D\rho)-\mathrm{div}(\rho(H_p(x,Du_2)+\tilde{b}))-\mathrm{div}(m H_{pp}(x,Du_2) Dz+c)=0\,,\\
					z(T,x)=\mathlarger{\dm{G}}(x,m_2(T))(\rho(T))+z_T(x)\,,\qquad\rho(t_0)=0,,\\
					a(x)Dz\cdot\nu_{|\partial\Omega}=0\,,\quad\left(a(x)D\rho+\rho(H_p(x,Du)+\tilde{b})+mH_{pp}(x,Du) Dz+c\right)\cdot\nu_{|\partial\Omega}=0\,,
				\end{cases}
			\end{equation*}
			\begin{align*}
				&h(t,x)=h_1(t,x)+h_2(t,x)\,,\\
				&h_1=-\int_0^1 (H_p(x,sDu_1+(1-s)Du_2)-H_p(x,Du_2))\cdot D(u_1-u_2)\,ds\,,\\
				&h_2=\int_0^1\!\into\left(\dm{F}(x,sm_1(t)+(1-s)m_2(t),y)-\dm{F}(x,m_2(t),y)\right)(m_1(t)-m_2(t))(dy)ds,\\
				&c(t)=c_1(t)+c_2(t)\,,\\
				&c_1(t)=(m_1(t)-m_2(t))H_{pp}(x,Du_2)(Du_1-Du_2)\,,\\
				&c_2(t)=m_1\int_0^1\left(H_{pp}(x,sDu_1+(1-s)Du_2)-H_{pp}(x,Du_2)\right)(Du_1-Du_2)\,ds\,,\\
				&z_T=\int_0^1\into\left(\dm{G}(x,sm_1(T)+(1-s)m_2(T),y)\right.\\
				&\left.\hspace{6cm}-\dm{G}(x,m_2(T),y)\right)(m_1(T)-m_2(T))(dy)ds\,.
			\end{align*}
			So, \eqref{stimelin} implies that
			\begin{equation}\label{rogueuno}
				\norm{u_1-u_2-v}\amv+\supo\norm{m_1(t)-m_2(t)-\mu(t)}\amc\le C\left(\norm{h}_{0,\alpha}+\norm{c}_{L^1}+\norm{z_T}_{2+\alpha}\right)\,.
			\end{equation}
			
			Now we bound the right-hand side term in order to obtain \eqref{boundmder}.
			
			We start with the term $h=h_1+h_2$. We can write
			$$
			h_1=-\int_0^1\int_0^1 s\, \langle H_{pp}(x,rsDu_1+(1-rs)Du_2)\,(Du_1-Du_2)\,,\, (Du_1-Du_2)\rangle\,drds\,.
			$$

			Using the properties of H\"{o}lder norm and \eqref{lipsch}, it is immediate to obtain
			\begin{align*}
				\norm{h_1}_{0,\alpha}\le C\norm{D(u_1-u_2)}_{0,\alpha}^2\le C\dw(m_{0}^1,m_{0}^2)^2\,.
			\end{align*}
			
			As regards the $h_2$ term, we can immediately bound the quantity
			$$
			|h_2(t,x)-h_2(t,y)|
			$$
			by
			\begin{align*}
				|x-y|^\alpha\dw(m_1(t),m_2(t))\int_0^1\norm{D_m F(\cdot,sm_1(t)+(1-s)m_2(t),\cdot)-D_m F(\cdot,m_2(t),\cdot)}_{\alpha,\infty}ds\,.
			\end{align*}
			Using the regularity of $F$ and \eqref{lipsch}, we get
			$$
			\norm{h_2}_{0,\alpha}=\sup\limits_{t\in[0,T]}\norm{h_2(t,\cdot)}_\alpha\le C\dw(m_0^1,m_0^2)^2\,.
			$$
			A similar estimate holds for the function $z_T$. As regards the function $c$, we have
			\begin{align*}
				\norm{c_1}_{L^1}=\intif H_{pp}(x,Du_2)(Du_1-Du_2)(m_1(t,dx)-m_2(t,dx))\,dt\\ \le C\norm{u_1-u_2}\amv\dw(m_1(t),m_2(t))\le C\dw(m_0^1,m_0^2)^{2}\,,
			\end{align*}
			and, using the notation $u_{1+s}:=sDu_1+(1-s)Du_2$,
			\begin{align*}
				\norm{c_2}_{L^1}=&\int_0^1\intif \left(H_{pp}(x,Du_{1+s})-H_{pp}(x,Du_2)\right)(Du_1-Du_2)m_1(t,dx)\,dtds\\
				\le&\,C\norminf{Du_1-Du_2}^2\le C\dw(m_0^1,m_0^2)^2\,.
			\end{align*}
			Substituting these estimates in \eqref{rogueuno}, we obtain \eqref{boundmder} and we conclude the proof.
		\end{proof}
	\end{thm}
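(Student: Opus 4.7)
The strategy is the classical one for proving $\mathcal{C}^1$ regularity in the measure variable: verify that the remainder $(z,\rho):=(u_1-u_2-v,\,m_1-m_2-\mu)$ satisfies a linearized system of exactly the form \eqref{linear}, but with source terms that are \emph{quadratic} in $\dw(m_0^1,m_0^2)$, and then invoke the a priori estimate \eqref{stimelin} of Proposition \ref{linearD}.

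First I would subtract the two MFG systems for $(u_i,m_i)$ and then subtract the linearized system for $(v,\mu)$. By Taylor expanding $H(x,Du_1)$ and $H_p(x,Du_1)$ around $Du_2$, and $F(x,m_1)$, $G(x,m_1)$, $H_p(x,Du_1)m_1$ around $m_2$ and $u_2$, the nonlinearity produces exactly the source terms $h=h_1+h_2$, $c=c_1+c_2$ and terminal remainder $z_T$ displayed in the statement. The key point is that every such term is a second-order Taylor remainder, so it depends on products like $(Du_1-Du_2)\otimes(Du_1-Du_2)$, $(m_1-m_2)\otimes(Du_1-Du_2)$, or differences of $\dm{F}$, $\dm{G}$ evaluated at two close measures. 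The initial condition for $\rho$ vanishes since $\mu(t_0)=m_0^1-m_0^2$ is precisely the $t_0$-value of $m_1-m_2$.

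Next I would apply Proposition \ref{linearD} to $(z,\rho)$ with this data, obtaining
\begin{equation*}
\norm{z}\amv+\supo\norm{\rho(t)}\amc\le C\bigl(\norm{z_T}_{2+\alpha}+\norm{h}_{0,\alpha}+\norm{c}_{L^1}\bigr).
\end{equation*}
All that remains is to bound each of the three quantities on the right by $C\,\dw(m_0^1,m_0^2)^2$. For $h_1$ and $c_2$, rewriting the integral difference $H_p(x,sDu_1+(1-s)Du_2)-H_p(x,Du_2)$ as $\int_0^1 s\,H_{pp}(x,\dots)(Du_1-Du_2)\,dr$ and similarly for $H_{pp}$, one obtains a quadratic factor in $Du_1-Du_2$, which is controlled by $\norm{u_1-u_2}_{1,2+\alpha}^2$ and hence by $\dw(m_0^1,m_0^2)^2$ via \eqref{lipsch}. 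The term $c_1$ is a product of two first-order differences, bounded by $\norm{u_1-u_2}_{1,2+\alpha}\sup_t\dw(m_1(t),m_2(t))$ after integrating against $m_2$ and again yielding the desired square via Proposition \ref{holder}.

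The delicate terms are $h_2$ and $z_T$, which require the Lipschitz hypothesis on $\dm{F}$ and $\dm{G}$ with respect to $m$ (hypotheses (iv)-(v) of \ref{ipotesi}). Writing $m_s(t)=sm_1(t)+(1-s)m_2(t)$, the integrand of $h_2$ becomes $\dm{F}(\cdot,m_s(t),\cdot)-\dm{F}(\cdot,m_2(t),\cdot)$, whose $\mathcal{C}^{\alpha,1+\alpha}$ norm is bounded by $C\,\dw(m_s(t),m_2(t))\le C\,\dw(m_1(t),m_2(t))$. Pairing this with the measure $m_1(t)-m_2(t)$ and using again the Wasserstein control on $m_1-m_2$ from Proposition \ref{holder} produces the quadratic factor $\dw(m_0^1,m_0^2)^2$ in the $\mathcal{C}^{0,\alpha}$ norm; $z_T$ is handled identically with $2+\alpha$ in place of $\alpha$, exploiting the stronger regularity assumed on $G$. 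Combining everything yields \eqref{boundmder}; the differentiability of $U$ then follows from \eqref{repres} since $v(t_0,x)=\langle K(t_0,x,m_0^2,\cdot),m_0^1-m_0^2\rangle$ coincides up to a $o(\dw(m_0^1,m_0^2))$ error with $u_1(t_0,x)-u_2(t_0,x)=U(t_0,x,m_0^1)-U(t_0,x,m_0^2)$, which is the defining property of $\dm{U}$ in Definition \ref{dmu}. The main obstacle is keeping track of the Neumann compatibility conditions when applying Proposition \ref{linearD} to $(z,\rho)$, since $z_T$ is built from $\dm{G}$ and its boundary behavior is exactly what hypothesis (vi) of \ref{ipotesi} is designed to guarantee.
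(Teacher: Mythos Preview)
Your proposal is correct and follows essentially the same approach as the paper's proof: write $(z,\rho)$ as the solution of a system of the form \eqref{linear} with $\rho_0=0$ and source terms $h,c,z_T$ given by second-order Taylor remainders, apply the a priori estimate \eqref{stimelin} of Proposition \ref{linearD}, and then bound each of $\norm{h}_{0,\alpha}$, $\norm{c}_{L^1}$, $\norm{z_T}_{2+\alpha}$ by $C\,\dw(m_0^1,m_0^2)^2$ using the Lipschitz estimates of Proposition \ref{holder} and the Lipschitz hypotheses on $\dm{F},\dm{G}$. One small slip: in your treatment of $c_1$ you say ``after integrating against $m_2$'', but $c_1$ carries the factor $m_1-m_2$, not $m_2$; the paper bounds $\norm{c_1}_{L^1}$ directly via $\norm{m_1-m_2}_{L^p}\le C\dw(m_0^1,m_0^2)$ (Corollary \ref{samestrategies}) times $\norm{Du_1-Du_2}_\infty$, which is exactly the product of two first-order differences you describe.
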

	
	Since
	$$
	v(t_0,x)=\into K(t_0,x,m_{02},y)(m_{01}(dy)-m_{02}(dy))\,,
	$$
	equation \eqref{boundmder} implies
	$$
	\norminf{U(t_0,\cdot,m_{01})-U(t_0,\cdot,m_{02})-\into K(t_0,\cdot,m_{02},y)(m_{01}-m_{02})(dy) }\le C\dw(m_{01},m_{02})^{2}.
	$$
	
	As a straightforward consequence, we have that $U$ is differentiable with respect to $m$ and
	$$
	\dm{U}(t,x,m,y)=K(t,x,m,y)\,.
	$$
	Consequently, using \eqref{kappa} we obtain
	\begin{equation}\label{regdu}
		\sup\limits_t\norm{\dm{U}(t,\cdot,m,\cdot)}_{2+\alpha,1+\alpha}\le C\,.
	\end{equation}
	
	But, in order to make sense to equation \eqref{Master}, we need at least that $\dm{U}$ is almost everywhere twice differentiable with respect to $y$.
	
	To do that, we need to improve the estimates \eqref{stimelin} for a couple $(v,\mu)$ solution of \eqref{linDuDm}.
	
	\begin{prop}
		Let $\mu_0\in\mathcal{C}^{-(1+\alpha)}$. Then the unique solution $(v,\mu)$ satisfies
		\begin{equation}\label{sbrigati}
			\norm{v}_{1,2+\alpha}+\supo\norm{\mu(t)}_{-(2+\alpha),N}\le C\norm{\mu_0}_{-(2+\alpha)}\,.
		\end{equation}
		\begin{proof}
			We consider the solution $(v,\mu)$ obtained in Proposition \ref{linearD}. Since $\mu$ satisfies $\mu=\sigma\Phi(\mu)$ with $\sigma=1$, we can use \eqref{stimz} with $z_T=h=0$ and obtain
			\begin{equation}\label{cumnupnat}
				\norm{v}_{1,2+\alpha}\le C\supo\norm{\mu(t)}_{-(2+\alpha),N}\,.
			\end{equation}
			We want to estimate the right-hand side. Using \eqref{forsemisalvo} we have
			\begin{equation}\label{ngroc}
				\supo\norm{\mu(t)}_{-(2+\alpha),N}\le C\left(\norm{\mu_0}_{-(2+\alpha)}+\norm{ mH_{pp}(x,Du)Dv}_{L^1}\right)\,.
			\end{equation}
			
			The last term is estimated, as in Proposition \ref{linearD}, by
			\begin{equation}\label{mannaggia}
				\norm{\sigma mH_{pp}(x,Du)Dv}_{L^1}\le C\left(\intif m\langle H_{pp}(x,Du)Dv,Dv\rangle\,dxdt\right)^\miezz\,.
			\end{equation}
			The right-hand side term can be bounded using \eqref{stimasigma} with $h=z_T=c=0$:
			\begin{equation}\label{crist}
				\begin{split}
					\intif m\langle H_{pp}(x,Du) Dv,Dv\rangle\spazio dxdt
					\le\norm{v}\amv\norm{\mu_0}_{-(2+\alpha)}\,.
				\end{split}
			\end{equation}
			
			Hence, plugging estimates \eqref{mannaggia} and \eqref{crist} into \eqref{ngroc} we obtain
			\begin{equation}\label{probbiatottquanta}
				\supo\norm{\mu(t)}_{-(2+\alpha),N}\le C\left(\norm{\mu_0}_{-(2+\alpha)}+\norm{v}\amv^\miezz\norm{\mu_0}_{-(2+\alpha),N}^\miezz\right)\,.
			\end{equation}
			Coming back to \eqref{cumnupnat} and using a generalized Young's inequality, we get
			$$
			\norm{v}\amv\le C\norm{\mu_0}_{-(2+\alpha)}\,,
			$$
			and finally, substituting the last estimate into \eqref{probbiatottquanta}, we obtain \eqref{sbrigati} and we conclude.
		\end{proof}
	\end{prop}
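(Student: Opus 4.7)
My plan is to bootstrap from the rougher estimates of Proposition~\ref{linearD} (which bound $(v,\mu)$ in terms of $\norm{\mu_0}_{-(1+\alpha)}$) up to the sharper $\mathcal{C}^{-(2+\alpha)}$-level estimate, by closing a self-improving loop that couples the Hamilton--Jacobi-type equation for $v$ with the Fokker--Planck equation for $\mu$. The key point is that the data $(z_T,h,c)$ all vanish here, so the only driver is $\mu_0$, and each estimate can be read against the weaker $\|\mu_0\|_{-(2+\alpha)}$ norm.

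Step one: apply the $\mathcal{C}^{1,2+\alpha}$ estimate used inside the proof of Proposition~\ref{linearD} (inequality~\eqref{stimz}) with $z_T=h=0$. Since $\dm{F}(\cdot,m(t),\cdot)$ satisfies the Neumann compatibility condition by hypothesis~(vi) of~\ref{ipotesi}, Theorem~5.1.21 of~\cite{lunardi} gives
\begin{equation*}
\norm{v}\amv \le C\,\supo\norm{\mu(t)}_{-(2+\alpha),N}.
\end{equation*}

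Step two: feed the Fokker--Planck equation for $\mu$ into Corollary~\eqref{forsemisalvo}. The drift $b=H_p(x,Du)+\tilde b$ lies in $\mathcal{C}^{\alpha/2,\alpha}$ thanks to \eqref{first}, so the corollary applies with the divergence source $f=\mathrm{div}(mH_{pp}(x,Du)Dv)$, yielding
\begin{equation*}
\supo\norm{\mu(t)}_{-(2+\alpha),N} \le C\Bigl(\norm{\mu_0}_{-(2+\alpha)}+\norm{mH_{pp}(x,Du)Dv}_{L^1}\Bigr).
\end{equation*}

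Step three: control the $L^1$ norm of $mH_{pp}(x,Du)Dv$ by a Cauchy--Schwarz duality (as in the proof of Proposition~\ref{linearD}), reducing it to the quadratic energy $\intif m\langle H_{pp}(x,Du)Dv,Dv\rangle dxdt$, which in turn is estimated by the duality identity~\eqref{stimasigma} (with $h=z_T=c=0$) against $\norm{v}\amv\,\norm{\mu_0}_{-(2+\alpha)}$. Chaining everything gives
\begin{equation*}
\norm{v}\amv \le C\Bigl(\norm{\mu_0}_{-(2+\alpha)}+\norm{v}\amv^{\miezz}\norm{\mu_0}_{-(2+\alpha)}^{\miezz}\Bigr),
\end{equation*}
and a generalized Young inequality absorbs the $\norm{v}\amv^{\miezz}$ factor, producing $\norm{v}\amv\le C\norm{\mu_0}_{-(2+\alpha)}$. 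Substituting back into the $\mathcal{C}^{-(2+\alpha),N}$ bound for $\mu$ closes~\eqref{sbrigati}.

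The main obstacle is the legitimacy of Step two: Corollary~\eqref{forsemisalvo} was proved under the hypothesis that the drift belongs to $\mathcal{C}^{\alpha/2,\alpha}$, and this is exactly what upgrades the dual norm from $\mathcal{C}^{-(1+\alpha),N}$ to $\mathcal{C}^{-(2+\alpha),N}$; once this regularity of $b$ is recognized (from the first-stage regularity of $u$ in~\eqref{first}) the rest is a routine bootstrap via Cauchy--Schwarz and Young. A minor technical subtlety is that the divergence source $\mathrm{div}(mH_{pp}(x,Du)Dv)$ must be shown to lie in $L^1(W^{-1,\infty})$, which follows because $mH_{pp}(x,Du)Dv\in L^1(Q_T)$ by~\eqref{mlp} and the $W^{1,\infty}$ bound on $v$ coming from~\eqref{stimelin}.
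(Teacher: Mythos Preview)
Your proposal is correct and follows essentially the same approach as the paper: the same bootstrap loop coupling \eqref{stimz} with the $\mathcal{C}^{-(2+\alpha),N}$ estimate \eqref{forsemisalvo}, the same Cauchy--Schwarz reduction to the quadratic energy, the same use of the duality identity \eqref{stimasigma} with $h=z_T=c=0$, and the same closure via Young's inequality. Your added remarks on why $b=H_p(x,Du)+\tilde b\in\mathcal{C}^{\alpha/2,\alpha}$ (so that \eqref{forsemisalvo} applies) and why the divergence source lies in $L^1(W^{-1,\infty})$ are correct and make explicit what the paper leaves implicit.
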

	
	As an immediate Corollary, we get the desired estimate for $\dm{U}$.
	\begin{cor}
		Suppose hypotheses \ref{ipotesi} satisfied. Then the derivative $\dm{U}$ is twice differentiable with respect to $y$, together with its first and second derivatives with respect to $x$, and the following estimate hold:
		\begin{equation}\label{lentezza}
			\norm{\dm{U}(t,\cdot,m,\cdot)}_{2+\alpha,2+\alpha}\le C\,.
		\end{equation}
		\begin{proof}
			We want to prove that, $\forall\,i,j$, the incremental ratio
			\begin{equation}\label{alotteriarubabbeh}
				R^h_{i,j}(x,y):=\frac{\partial_{y_i}\dm{U}(t_0,x,m_0,y+he_j)-\partial_{y_i}\dm{U}(t_0,x,m_0,y)}{h}
			\end{equation}
			is a Cauchy sequence for $h\to0\,$ together with its first and second derivatives with respect to $x$. Then we have to estimate, for $h,k>0$, the quantity $\left|D^l_xR^h_{i,j}(x,y)-D^l_xR^k_{i,j}(x,y)\right|\,,$ for $|l|\le 2$.
			
			We already know that
			$$
			\partial_{y_i}\dm{U}(t_0,x,m_0,y)=v(t_0,x;-\partial_{y_i}\delta_y)\,.
			$$
			Using the linearity of the system \eqref{linDuDm}, we obtain that 
			$$
			\left|D^l_xR^h_{i,j}(x,y)-D^l_xR^k_{i,j}(x,y)\right|=D^l_xv\left(t_0,x;\Delta_h^j(-\partial_{y_i}\delta_y)-\Delta_k^j(-\partial_{y_i}\delta_y)\right)\,,
			$$
			where  $\Delta_h^j(-\partial_{y_i}\delta_y)=-\frac1h(\partial_{y_i}\delta_{y+he_j}-\partial_{y_i}\delta_y)\,.$
			
			Hence, estimate \eqref{sbrigati} and Lagrange's Theorem implies
			\begin{align*}
				&\left|D^l_xR^h_{i,j}(x,y)-D^l_xR^k_{i,j}(x,y)\right|\le C\norm{\Delta_h^j(-\partial_{y_i}\delta_y)-\Delta_k^j(-\partial_{y_i}\delta_y)}_{-(2+\alpha)}\\&=\sup\limits_{\norm{\phi}_{2+\alpha}\le 1}\left(\frac{\partial_{y_i}\phi(y+he_j)-\partial_{y_i}\phi(y)}{h}-\frac{\partial_{y_i}\phi(y+ke_j)-\partial_{y_i}\phi(y)}{k}\right)\\&=\sup\limits_{\norm{\phi}_{2+\alpha}\le 1}\left(\partial^2_{y_iy_j}\phi(y_{\phi,h})-\partial^2_{y_iy_j}\phi(y_{\phi,k})\right)\le\sup\limits_{\norm{\phi}_{2+\alpha}\le 1}|y_{\phi,h}-y_{\phi,k}|^\alpha\le |h|^\alpha+|k|^\alpha\,,
			\end{align*}
			for a certain $y_{\phi,h}$ in the line segment between $y$ and $y+he_j$ and $y_{\phi,k}$ in the line segment between $y$ and $y+ke_j$.
			
			Since the last term goes to $0$ when $h,k\to0$, we have proved that the incremental ratio \eqref{alotteriarubabbeh} and its first and second derivative w.r.t $x$ are Cauchy sequences in $h$, and so converging when $h\to0$. This proves that $D^l_x\dm{U}$ is twice differentiable with respect to $y$, for all $0\le|l|\le2\,$.
			
			In order to show the H\"{o}lder bound for $\dm{U}$ w.r.t. $y$, we consider $y,y'\in\Omega$ and we consider the function
			$$
			R^h_{i,j}(x,y)-R^h_{i,j}(x,y')\,.
			$$
			Then we know from the linearity of \eqref{linDuDm}
			$$
			R^h_{i,j}(x,y)-R^h_{i,j}(x,y')=v(t_0,x;\Delta^j_h(-\partial_{y_i}\delta_y)-\Delta^j_h(-\partial_{y_i}\delta_{y'}))\,,
			$$
			and so, using \eqref{sbrigati} and 
			$$
			\norm{R^h_{i,j}(\cdot,y)-R^h_{i,j}(\cdot,y')}_{2+\alpha}\le C\norm{\Delta^j_h(-\partial_{y_i}\delta_y)-\Delta^j_h(-\partial_{y_i}\delta_{y'})}_{-(2+\alpha)}\,.
			$$
			
			Now we pass to the limit when $h\to0$. 
			It is immediate to prove that
			$$
			\Delta^j_h(-\partial_{y_i}\delta_y)-\Delta^j_h(-\partial_{y_i}\delta_{y'})\overset{h\to0}{\longrightarrow}\partial_{y_j}\partial_{y_i}\delta_y-\partial_{y_j}\partial_{y_i}\delta_{y'}\qquad\mbox{in }\mathcal{C}^{-(2+\alpha)}\,.
			$$
			
			Since $D^l_xR^h_{i,j}(x,y)\to \partial^2_{y_iy_j}D^l_x\dm{U}(x,y)$ for all $|l|\le2$, we can use Ascoli-Arzel\`a to obtain that
			$$
			\norm{\partial^2_{y_iy_j}\dm{U}(t,\cdot,m,y)-\partial^2_{y_iy_j}\dm{U}(t,\cdot,m,y')}_{2+\alpha}\le C\norm{\partial_{y_j}\partial_{y_i}\delta_y-\partial_{y_j}\partial_{y_i}\delta_{y'}}_{-(2+\alpha)}\le C|y-y'|^\alpha\,,
			$$
			which proves \eqref{lentezza} and concludes the proof.
		\end{proof}
	\end{cor}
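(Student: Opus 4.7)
The plan is to use the fact that $\dm{U}(t_0,x,m_0,y)=K(t_0,x,m_0,y)=v(t_0,x;\delta_y)$, so by linearity of \eqref{linDuDm} the first derivative in $y_i$ is given by $\partial_{y_i}\dm{U}(t_0,x,m_0,y)=v(t_0,x;-\partial_{y_i}\delta_y)$. The task is to show that a similar identity can be obtained at second order in $y$ and that the resulting objects live in $\mathcal{C}^{2+\alpha}$ with respect to $x$. The key new tool is the previous Proposition, which upgraded the estimate for $(v,\mu)$ to
$$
\norm{v}_{1,2+\alpha}+\supo\norm{\mu(t)}_{-(2+\alpha),N}\le C\norm{\mu_0}_{-(2+\alpha)}.
$$
This estimate is exactly what is needed because distributional derivatives of Dirac masses of order two sit naturally in $\mathcal{C}^{-(2+\alpha)}$.

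The main step is to prove that the difference quotient
$$
R^h_{i,j}(x,y):=\frac{\partial_{y_i}\dm{U}(t_0,x,m_0,y+he_j)-\partial_{y_i}\dm{U}(t_0,x,m_0,y)}{h}
$$
is Cauchy as $h\to 0$, simultaneously for the value and for its first and second spatial derivatives. By linearity this difference is realized as $v(t_0,x;\mu_0^{h,k})$ where $\mu_0^{h,k}$ is the difference of two discrete gradients applied to $-\partial_{y_i}\delta_y$. Using the upgraded estimate \eqref{sbrigati}, the norm in $\mathcal{C}^{1,2+\alpha}$ of $R^h_{i,j}-R^k_{i,j}$ is controlled by $\|\mu_0^{h,k}\|_{-(2+\alpha)}$. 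Testing against $\phi$ with $\norm{\phi}_{2+\alpha}\le 1$ and applying the mean value theorem twice gives $\langle\mu_0^{h,k},\phi\rangle=\partial^2_{y_iy_j}\phi(\xi_h)-\partial^2_{y_iy_j}\phi(\xi_k)$ for points $\xi_h,\xi_k$ close to $y$; since $\partial^2\phi$ is $\alpha$-Hölder, this is bounded by $|h|^\alpha+|k|^\alpha$. This shows that $R^h_{i,j}$ converges in $\mathcal{C}^{2+\alpha}_x$, which yields existence of $\partial^2_{y_iy_j}D^l_x\dm{U}$ for $|l|\le 2$.

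For the Hölder bound in $y$ required by \eqref{lentezza}, I would repeat the same linearity trick, now comparing $R^h_{i,j}(x,y)$ with $R^h_{i,j}(x,y')$ at fixed $h$. The initial datum becomes $-\partial_{y_i}\delta$ evaluated at a pair of offsets around $y$ minus the same pair around $y'$; \eqref{sbrigati} reduces the $\mathcal{C}^{2+\alpha}_x$ norm of the difference to the $\mathcal{C}^{-(2+\alpha)}$ norm of that measure. Passing to the limit $h\to 0$ (Ascoli--Arzel\`a guarantees the limit is $\partial^2_{y_iy_j}\dm{U}$ with its derivatives in $x$), the remaining measure is $\partial_{y_j}\partial_{y_i}\delta_y-\partial_{y_j}\partial_{y_i}\delta_{y'}$, whose dual norm is estimated by $|y-y'|^\alpha$ again via the Hölder regularity of $D^2\phi$ for admissible test functions. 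This delivers the bound $\norm{\dm{U}(t,\cdot,m,\cdot)}_{2+\alpha,2+\alpha}\le C$.

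The main technical delicacy, and the reason the previous proposition was introduced, is that the naive bound \eqref{stimelin} only controls $v$ in $\mathcal{C}^{1,2+\alpha}$ by $\norm{\mu_0}_{-(1+\alpha)}$, which is insufficient: second distributional derivatives of $\delta_y$ are not bounded in $\mathcal{C}^{-(1+\alpha)}$. The whole argument hinges on having the improved $\mathcal{C}^{-(2+\alpha)}$ estimate, and the step where I must be careful is the passage to the limit $h\to 0$ in the Hölder-in-$y$ inequality, since one needs simultaneous convergence of the spatial derivatives of $R^h_{i,j}$ up to order two; this is where the compactness provided by the $\mathcal{C}^{1,2+\alpha}$ bound (hence a small gain of regularity) is essential.
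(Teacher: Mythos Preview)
Your proposal is correct and follows essentially the same route as the paper: you exploit linearity of \eqref{linDuDm} to express the difference quotients as $v(t_0,\cdot;\mu_0)$ for suitable distributional initial data, invoke the upgraded estimate \eqref{sbrigati} to control everything by $\norm{\mu_0}_{-(2+\alpha)}$, use the mean value theorem and the $\alpha$-H\"older continuity of $D^2\phi$ to verify the Cauchy property in $h$, and then repeat the scheme at two base points $y,y'$ to obtain the H\"older bound in $y$ via Ascoli--Arzel\`a. Your additional remark explaining why \eqref{stimelin} alone would not suffice (second derivatives of $\delta_y$ are unbounded in $\mathcal{C}^{-(1+\alpha)}$) is exactly the motivation behind the preceding proposition.
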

	
	We conclude this part with a last property on the derivative $D_mU$, which will be essential in order to prove the uniqueness of solutions for the Master Equation.
	
	\begin{cor}\label{delarue}
		The function $U$ satisfies the following Neumann boundary conditions:
		\begin{equation*}
			\begin{split}
				&a(x)D_x\dm{U}(t,x,m,y)\cdot\nu(x)=0\,,\qquad\forall x\in\partial\Omega, y\in\Omega,t\in[0,T],m\in\mathcal{P}(\Omega)\,,\\
				&a(y)D_mU(t,x,m,y)\,\,\,\,\cdot\nu(y)=0\,,\qquad\forall x\in\Omega, y\in\partial\Omega,t\in[0,T],m\in\mathcal{P}(\Omega)\,.
			\end{split}
		\end{equation*}
		\begin{proof}
			Since $\dm{U}(t_0,x,m_0,y)=v(t_0,x)$, where $(v,\mu)$ is the solution of \eqref{linDuDm} with $\mu_0=\delta_y$, the first condition is immediate because of the Neumann condition of \eqref{linDuDm}.
			
			For the second condition, we consider $y\in\partial\Omega$ and we take
			$$
			\mu_0=-\partial_w(\delta_y)\,,\qquad\mbox{with }w=a(y)\nu(y)\,.
			$$
			We want to prove that $(v,\mu)=(0,\mu)$ is a solution of \eqref{linDuDm} with $\mu_0=-\partial_w\delta_y$, where $\mu$ is the unique solution in the sense of Definition \ref{canzonenuova} of
			$$
			\begin{cases}
				\mu_t-\mathrm{div}(a(x)D\mu)-\mathrm{div}(\mu (H_p(x,Du)+\tilde{b}))=0\,,\\
				\mu(t_0)=\mu_0\,,\\
				\left(a(x)D\mu+\mu (H_p(x,Du)+\tilde{b})\right)\cdot\nu_{|\partial\Omega}=0\,. 
			\end{cases}
			$$
			
			We only have to check that, if $\mu$ is a solution of this equation, then $v=0$ solves
			\begin{equation}\label{muovt}
				\begin{cases}
					-v_t-\mathrm{tr}(a(x)D^2v)+H_p(x,Du)\cdot Dv=\mathlarger{\frac{\delta F}{\delta m}}(x,m(t))(\mu(t))\,,\\
					v(T,x)=\mathlarger{\frac{\delta G}{\delta m}}(x,m(T))(\mu(T))\,,\\
					a(x)Dv\cdot\nu_{|\partial\Omega}=0\,,
				\end{cases}
			\end{equation}
			
			which reduces to prove that
			$$
			\dm{F}(x,m(t))(\mu(t))=\dm{G}(x,m(T))(\mu(T))=0\,.
			$$
			We will give a direct proof.
			
			Choosing a test function $\phi(t,y)$ satisfying \eqref{hjbfp}, with $\psi(t,y)=0$ and $\xi(y)=\dm{F}(x,m(t),y)$, we have from boundary conditions of $\dm{F}$ that $\phi$ is a $\mathcal{C}^{\frac{1+\alpha}{2},1+\alpha}$ function satisfying Neumann boundary conditions.
			
			It follows from the weak formulation of $\mu$ that
			$$
			\dm{F}(x,m(t))(\mu(t))=\langle \mu(t),\dm{F}(x,m(t),\cdot)\rangle=\langle\mu_0,\phi(0,\cdot)\rangle=0\,,
			$$
			since $aD\phi\cdot\nu_{|\partial\Omega}=0$ and
			$$
			\langle\mu_0,\phi(0,\cdot)\rangle=\langle -\partial_w\delta_y,\phi(0,\cdot)\rangle=a(y)D\phi(0,y)\cdot\nu(y)=0\,.
			$$
			Same computations hold for $\dm{G}$, proving that $v=0$ satisfies \eqref{muovt}.
			
			Then we can easily conclude:
			\begin{align*}
				a(y)D_mU(t_0,x,m_0,y)\cdot\nu(y)&=D_y\dm{U}(t_0,x,m_0,y)\cdot w\\&=\left\langle\dm{U}(t_0,x,m_0,\cdot),\mu_0\right\rangle=v(t_0,x)=0\,.
			\end{align*}
		\end{proof}
	\end{cor}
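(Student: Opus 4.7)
The first boundary condition is essentially free: by construction, if $(v,\mu)$ is the solution of the linearized system \eqref{linDuDm} with $\mu_0=\delta_y$, then the representation formula \eqref{repres} gives $\dm{U}(t_0,x,m_0,y)=v(t_0,x)$, and the Neumann condition on $v$ that is built into \eqref{linDuDm} translates directly into $a(x)D_x\dm{U}(t,x,m,y)\cdot\nu(x)=0$ for $x\in\partial\Omega$. So the plan is to spend the rest of the argument on the second, more delicate condition.

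For the second condition, the main idea is to exploit the representation formula \eqref{repres} by feeding in a carefully chosen distributional initial datum. Given $y\in\partial\Omega$, set $w=a(y)\nu(y)$ and take $\mu_0=-\partial_w\delta_y$, which indeed lies in $\mathcal{C}^{-(1+\alpha)}$. Then, at least formally, \eqref{repres} computes the directional derivative
\begin{equation*}
v(t_0,x)\;=\;\langle\mu_0,\dm{U}(t_0,x,m_0,\cdot)\rangle\;=\;\partial_w\dm{U}(t_0,x,m_0,y)\;=\;a(y)D_mU(t_0,x,m_0,y)\cdot\nu(y),
\end{equation*}
so the desired boundary condition reduces to showing $v\equiv0$ for this particular $\mu_0$.

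To show $v\equiv 0$, I would exhibit $(v,\mu)=(0,\mu)$ as \emph{the} solution of \eqref{linDuDm}, where $\mu$ is the unique solution (in the sense of Definition \ref{canzonenuova}) of the uncoupled Fokker-Planck equation with drift $H_p(x,Du)+\tilde b$ and initial datum $\mu_0=-\partial_w\delta_y$. With $v=0$, the only nontrivial things to verify are that the forcing $\dm{F}(x,m(t))(\mu(t))$ and the terminal datum $\dm{G}(x,m(T))(\mu(T))$ vanish identically. For the forcing, I would apply the weak formulation \eqref{weakmu} of $\mu$ using the test pair $\psi=0$, $\xi(\cdot)=\dm{F}(x,m(t),\cdot)$. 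By hypothesis $(vi)$ of \ref{ipotesi}, this $\xi$ satisfies the Neumann condition, hence the corresponding $\phi$ of \eqref{hjbfp} has $a D\phi\cdot\nu=0$ on $\partial\Omega$, and therefore
\begin{equation*}
\dm{F}(x,m(t))(\mu(t))\;=\;\langle\mu_0,\phi(0,\cdot)\rangle\;=\;\langle-\partial_w\delta_y,\phi(0,\cdot)\rangle\;=\;a(y)D\phi(0,y)\cdot\nu(y)\;=\;0.
\end{equation*}
The same argument applied at $t=T$, using the Neumann compatibility of $\dm{G}$, kills the terminal datum. Uniqueness for \eqref{linDuDm} (Proposition \ref{linearD}) then forces $v\equiv 0$, and the second Neumann identity follows.

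The main subtlety I anticipate is justifying that the distribution $\mu_0=-\partial_w\delta_y$ is an admissible initial datum and that the pairing $\langle\mu_0,\phi(0,\cdot)\rangle$ makes sense as $a(y)D\phi(0,y)\cdot\nu(y)$: this requires $\phi(0,\cdot)\in\mathcal{C}^{1+\alpha}$ up to the boundary, which is exactly the regularity provided by Lemma \ref{sonobravo}, and it requires that the test functions $\xi=\dm{F}(x,m(t),\cdot)$ and $\xi=\dm{G}(x,m(T),\cdot)$ belong to $\mathcal{C}^{1+\alpha,N}$, which is precisely the content of hypothesis $(vi)$. This is the one place where the otherwise unusual compatibility assumption on $\dm{F}$ and $\dm{G}$ is genuinely used; once those assumptions are in force, the argument is essentially a direct computation with the weak formulation of the Fokker-Planck equation.
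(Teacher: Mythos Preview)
Your proposal is correct and follows essentially the same route as the paper: the first condition is immediate from the Neumann condition on $v$ in \eqref{linDuDm}, and for the second you choose $\mu_0=-\partial_w\delta_y$ with $w=a(y)\nu(y)$, show that $(0,\mu)$ solves \eqref{linDuDm} by using the weak formulation \eqref{weakmu} together with hypothesis $(vi)$ to kill $\dm{F}(x,m(t))(\mu(t))$ and $\dm{G}(x,m(T))(\mu(T))$, and then conclude via uniqueness. Your explicit identification of where hypothesis $(vi)$ and Lemma \ref{sonobravo} enter is accurate and matches the paper's argument.
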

	
	\section{Solvability of the first-order Master Equation}
	
	The $\mathcal{C}^1$ character of $U$ with respect to $m$ is crucial in order to prove the main theorem of this chapter.

	\begin{proof}[Proof of Theorem \ref{settepuntouno}]
		We start from the existence part.\\
		
		\emph{Existence}. We start assuming that $m_0$ is a smooth and positive function satisfying \eqref{neumannmzero}, and we consider $(u,m)$ the solution of $MFG$ system starting from $m_0$ at time $t_0$. Then
		$$
		\partial_t U(t_0,x,m_0)
		$$
		can be computed as the sum of the two limits:
		$$
		\lim\limits_{h\to0} \frac{U(t_0+h,x,m_0)-U(t_0+h,x,m(t_0+h))}{h}
		$$
		and
		$$
		\lim\limits_{h\to0} \frac{U(t_0+h,x,m(t_0+h))-U(t_0,x,m_0)}{h}\,.
		$$
		The second limit, using the very definition of $U$, is equal to
		\begin{align*}
			\lim\limits_{h\to0} \frac{u(t_0+h,x)-u(t_0,x)}{h}=u_t(t_0,x)=-\mathrm{tr}(a(x)D^2u(t_0,x))+H(x,Du(t_0,x))\\-F(x,m(t_0))=-\mathrm{tr}(a(x)D^2_xU(t_0,x,m_0))+H(x,D_xU(t_0,x,m_0))-F(x,m_0)\,.
		\end{align*}
		As regards the first limit, defining $m_s:=(1-s)m(t_0)+sm(t_0+h)$ and using the $\mathcal{C}^1$ regularity of $U$ with respect to $m$, we can write it as
		\begin{align*}
			-\lim\limits_{h\to 0}\int_0^1\into\dm{U}(t_0+h,x,m_s,y)\frac{(m(t_0+h,y)-m(t_0,y))}h\,dyds\\
			=-\int_0^1\into\dm{U}(t_0,x,m_0,y)m_t(t_0,y)\,dyds=\into\dm{U}(t_0,x,m_0,y)m_t(t_0,y)\,dy\\=-\into\dm{U}(t_0,x,m_0,y) \,\mathrm{div}\!\left(a(y)Dm(t_0,y) +m(t_0,y)(\tilde{b}+H_p(y,Du(t_0,y)))\right)dy\,.
		\end{align*}
		Taking into account the representation formula \eqref{reprform} for $\dm{U}$ , we integrate by parts and use the boundary condition of $\dm{U}$ and $m$ to obtain
		\begin{align*}
			\into\left[H_p(y,D_xU(t_0,y,m_0))D_mU(t_0,x,m_0,y)-\mathrm{tr}\left(a(y)D_yD_mU(t_0,x,m_0,y)\right)\right]dm_0(y)
		\end{align*}
		So with the computation of the two limits we obtain
		\begin{align*}
			&\partial_t U(t,x,m)=-\mathrm{tr}\left(a(x)D_x^2 U(t,x,m)\right)+H\left(x,D_x U(t,x,m)\right)\\&-\mathlarger{\into}\mathrm{tr}\left(a(y)D_y D_m U(t,x,m,y)\right)dm(y)+\\&\mathlarger{\into} D_m U(t,x,m,y)\cdot H_p(y,D_x U(t,y,m))dm(y)- F(x,m)\,.
		\end{align*}
		So the equation is satisfied for all $m_0\in\mathcal{C}^\infty$ satisfying \eqref{neumannmzero}, and so, with a density argument, for all $m_0\in\mathcal{P}(\Omega)$.
		
		The boundary conditions are easily verified thanks to Corollary \ref{delarue}. This concludes the existence part.\\
		
		\emph{Uniqueness}. Let $V$ be another solution of the Master Equation \eqref{Master} with Neumann boundary conditions. We consider, for fixed $t_0$ and $m_0$, with $m_0$ smooth satisfying \eqref{neumannmzero}, the solution $\tilde{m}$ of the Fokker-Planck equation:
		\begin{equation*}
			\begin{cases}
				\tilde{m}_t-\mathrm{div}(a(x)D\tilde{m})-\mathrm{div}\left(\tilde{m}\left(H_p(x,D_xV(t,x,\tilde{m}))+\tilde{b}\right)\right)=0\,,\\
				\tilde{m}(t_0)=m_0\,,\\
				\left[a(x)D\tilde{m}+(\tilde{b}+D_xV(t,x,\tilde{m}))\right]\cdot\nu(x)_{|\partial\Omega}=0\,.
			\end{cases}
		\end{equation*}
		This solution is well defined since $D_xV$ is Lipschitz continuous with respect to the measure variable.
		
		Then we define $\tilde{u}(t,x)=V(t,x,\tilde{m}(t))$. Using the equations of $V$ and $\tilde{m}$, we obtain
		\begin{align*}
			\tilde{u}_t(t,x)=&\,V_t(t,x,\tilde{m}(t))+\into\dm{V}(t,x,\tilde{m}(t),y)\,\tilde{m}_t(t,y)\,dy\\
			=&\,V_t(t,x,\tilde{m}(t))+\into\dm{V}(t,x,\tilde{m}(t),y)\,\mathrm{div}\!\left(a(y)D\tilde{m}(t,y)\right)\,dy\\+&\,\into \dm{V}(t,x,\tilde{m}(t),y)\,\mathrm{div}\!\left(\tilde{m}\left(H_p(x,D_xV(t,x,\tilde{m}))+\tilde{b}\right)\right)\,dy\,.
		\end{align*}
		
		We compute the two integrals by parts. As regards the first, we have
		\begin{align*}
			&\into\dm{V}(t,x,\tilde{m}(t),y)\,\mathrm{div}\!\left(a(y)D\tilde{m}(t,y)\right)\,dy\\=-&\into a(y)D\tilde{m}(t,y)\,D_mV(t,x,\tilde{m}(t),y)\,dy+\int_{\partial\Omega}\dm{V}(t,x,\tilde{m}(t),y)\,a(y)D\tilde{m}(t,y)\cdot\nu(t,y)\,dy\\
			=&\into\mathrm{div}(a(y)D_yD_mV(t,x,\tilde{m}(t),y))\,\tilde{m}(t,y)\,dy-\into a(y)D_mV(t,x,\tilde{m}(t),y)\cdot\nu(y)\tilde{m}(t,y)dy\\+&\int_{\partial\Omega}\dm{V}(t,x,\tilde{m}(t),y)\,a(y)D\tilde{m}(t,y)\cdot\nu(t,y)\,dy\,,
		\end{align*}
		
		while for the second
		\begin{align*}
			&\into\dm{V}(t,x,\tilde{m}(t),y)\,\mathrm{div}\!\left(\tilde{m}\left(H_p(x,D_xV(t,x,\tilde{m}))+\tilde{b}\right)\right)\,dy\\
			=-&\into\left(H_p(x,D_xV(t,x,\tilde{m}))+\tilde{b}\right)D_mV(t,x,\tilde{m},y)\tilde{m}(t,y)dy\\+&\int_{\partial\Omega}\dm{V}(t,x,\tilde{m}(t),y)\left(H_p(x,D_xV(t,x,\tilde{m}))+\tilde{b}\right)\cdot\nu(y)\,\tilde{m}(t,y)dy\,.
		\end{align*}
		
		Putting together these estimates and taking into account the boundary conditions on $V$ and $m$:
		$$
		\left[a(x)D\tilde{m}+(\tilde{b}+D_xV(t,x,\tilde{m}))\right]\cdot\nu(x)_{|x\in\partial\Omega}=0\,,\qquad a(y)D_mV(t,x,m,y)\cdot\nu(y)_{|y\in\partial\Omega}=0\,,
		$$
		and the relation between the divergence and the trace term
		$$
		\mathrm{div}(a(x)D\phi(x))=\mathrm{tr}(a(x)D^2\phi(x))+\tilde{b}(x)D\phi(x)\,,\qquad\forall\phi\in W^{2,\infty}(\Omega)\,,
		$$
		we find
		\begin{align*}
			\tilde{u}_t(t,x)=&\,V_t(t,x,\tilde{m}(t))+\into\mathrm{tr}(a(y)D_yD_mV(t,x,\tilde{m},y))\,d\tilde{m}(y)\\-&\,\into -H_p(y,D_xV(t,y,\tilde{m})) D_mV(t,x,\tilde{m},y)\,d\tilde{m}(y)\\=&\,-\mathrm{tr}(a(x)D^2_xV(t,x,\tilde{m}(t)))+H(x,D_xV(t,x,\tilde{m}(t)))-F(x,\tilde{m}(t))\\=&\,-\mathrm{tr}(a(x)D^2\tilde{u}(t,x))+H(x,D\tilde{u}(t,x))-F(x,\tilde{m}(t))\,.
		\end{align*}
		This means that $(\tilde{u},\tilde{m})$ is a solution of the MFG system \eqref{meanfieldgames}-\eqref{fame}. Since the solution of the Mean Field Games system is unique, we get $(\tilde{u},\tilde{m})=(u,m)$ and so $V(t_0,x,m_0)=U(t_0,x,m_0)$ whenever $m_0$ is smooth.\\
		Then, using a density argument, the uniqueness is proved.
	\end{proof}
	
	\vspace{1.5cm}
	\textbf{Acknowledgements.} 
	I wish to sincerely thank P. Cardaliaguet and A. Porretta for the help and the support during the preparation of this article. I wish to thank also F. Delarue for the enlightening ideas he gave to me. 
	
\end{document}